\def\mesh{\rm mesh}
\def\ot{\leftarrow}
\def\xrarrow{\xrightarrow} 
\def\xlarrow{\xleftarrow} 
\def\<{\left<}
\def\>{\right>}
\DeclareMathOperator{\End}{End}
\DeclareMathOperator{\Hom}{Hom}%
\newcommand{\field}[1]{\mathbb{#1}}
\newcommand{\ZZ}{\ensuremath{{\field{Z}}}}
\newcommand{\CC}{\ensuremath{{\field{C}}}}
\newcommand{\RR}{\ensuremath{{\field{R}}}}
\def\ll{\lambda}
\def\LL{\Lambda}
\newcommand{\cB}{\ensuremath{{\mathcal{B}}}}
\newcommand{\cC}{\ensuremath{{\mathcal{C}}}}
\newcommand{\cD}{\ensuremath{{\mathcal{D}}}}
\newcommand{\cF}{\ensuremath{{\mathcal{F}}}}
\newcommand{\cX}{\ensuremath{{\mathcal{X}}}}
\def\a{\alpha}
\def\b{\beta}
\def\g{\gamma}
\def\d{\partial}
\def\e{\epsilon}
\def\f{\phi}
\def\k{\kappa}
\def\s{\sigma}
\def\t{\tau}
\def\th{\theta}
\def\w{\omega}
\def\ot{\leftarrow}
\def\onto{\twoheadrightarrow}
\def\ov{\overline}
\def\what{\widehat}
\def\wotimes{\,\what\otimes\,}
\def\Path{{\rm P}}
\def\Tan{{\rm T}}
\def\LHS{{\rm LHS}}
\def\RHS{{\rm RHS}}
\def\PathM{\Path M}
\def\PathMx{\Path(M,x_0,x_1)}
\def\st{\,|\,}
\def\op{^{op}}
\def\Gam{\Gamma}
\newtheorem{thm}{Theorem}[section]
\newtheorem{lem}[thm]{Lemma}
\newtheorem{cor}[thm]{Corollary}
\newtheorem{prop}[thm]{Proposition}
\theoremstyle{definition}
\newtheorem{defn}[thm]{Definition}
\theoremstyle{remark}
\newtheorem{rem}[thm]{Remark}
\numberwithin{equation}{section}
\begin{document}

\title{Iterated integrals of superconnections}

\author{Kiyoshi Igusa}


\begin{abstract} Starting with a $\ZZ$-graded superconnection on a graded vector bundle over a smooth manifold $M$, we show how Chen's iterated integration of such a superconnection over smooth simplices in $M$ gives an $A_\infty$ functor if and only if the superconnection is flat. If the graded bundle is trivial, this gives a twisting cochain.

Very similar results were obtained by K.T. Chen using similar methods. This paper is intended to explain this from scratch beginning with the definition and basic properties of a connection and ending with an exposition of Chen's ``formal connections'' and a brief discussion of how this is related to higher Reidemeister torsion.
\end{abstract}

\address{Department of Mathematics, Brandeis University, Waltham, MA 02454}

\email{igusa@brandeis.edu}

\subjclass[2000]{Primary 58C99, Secondary 57R22, 19J10}


\keywords{A-infinity functors, superconnections, Chen's iterated integrals}



\maketitle

\tableofcontents



\section*{Introduction}\label{sec:intro}

Superconnections on $\ZZ/2$-graded bundles were introduced by Quillen in \cite{Quillensuperconnections}. Later, Bismut and Lott used $\ZZ$-graded superconnections to define analytic torsion forms and analytic torsion classes for smooth bundles \cite{BismutLott95}. The cohomology bundle associated to a smooth manifold bundle $X\to E\to M$, i.e., the bundle over $M$ whose fiber is the deRham cohomology of the fiber $X$ has a flat $\ZZ$-graded superconnection which is unique in a certain sense. By Ed Brown's theorem about twisting cochains \cite{Brown59TwistedTensor}, this superconnection on the fiberwise cohomology bundle contains the information of the differentials in the Serre spectral sequence converging to the cohomology of the total space $E$. Sebastian Goette points out in \cite{Goette01} that these flat superconnections can be viewed as ``infinitesimal twisting cochains.'' And finally, K.T. Chen \cite{Chen73}, \cite{Chen75}, \cite{Chen77} studied ``differential twisting cochains'' which are almost the same as flat superconnections.

The purpose of this paper is to explain from scratch the explicit construction, using Chen's iterated integrals, which produces an $A_\infty$ functor from a flat $\ZZ$-grade superconnection on a graded smooth vector bundle. In the case when the vector bundle is trivial, this gives a twisting cochain. See \cite{ITwisted} for details on the reverse construction: How $A_\infty$ functors and twisting cochains gives rise to flat superconnections. By the definition of an $A_\infty$ functor, this is equivalent to showing that the superconnection parallel transport gives a hierarchy of chain maps, homotopies and higher homotopies if the superconnection is flat. The converse also holds: If the superconnection is not flat we will not get an $A_\infty$ functor. (Corollary \ref{D is flat iff its transport is A-infty})

Here is the idea. It follows from Stokes' theorem that the iterated integral of a superconnection satisfies the boundary condition that makes it a higher homotopy of the one lower degree mappings defined on the boundary if and only if
\[
	d\circ\Psi_k-\Psi_k\circ d=d\Psi_{k-1}.
\]
A sign of $(-1)^k$ will appear when the $k$-form part of $\Psi_k$ goes past the boundary map $d$.
Writing $A_0=d$,
this condition becomes equivalent to the condition:
\[
	dA_{k-1}=\sum_{i=0}^{k}A_iA_{k-i}
\]
which is the definition of a flat superconnection. More precisely $D=d-\sum A_i$ is a flat superconnection if and only $D^2=0$.

After writing the first draft of this paper, I obtained a copy of the complete works of K.T. Chen \cite{Chen} and there I found the same definitions and formulas with very similar notation. Chen calls a flat superconnection a ``differential twisting cochain.'' He integrates flat superconnections over simplices to obtain twisting cochains. The main improvement in the current definition is the variable boundary maps, i.e., $A_0$ in the notation below is a function on $M$. This is crucial for the application to higher Reidemeister torsion. In fact, the higher terms in the superconnection ($A_1,A_2,\cdots$) can be completely ignored because the formula for higher torsion is a polynomial in these terms and a polylogarithm in the $A_0$ term. This is also explained in \cite{ITwisted}. Therefore, we can say that the single aspect of the theory of flat superconnections which is ignored by Chen is the most important for application to higher Reidemeister torsion as developed in \cite{IBookOne}, \cite{IComplexTorsion}.

Very briefly the definitions are as follows, a $\ZZ$-graded superconnection \ref{def: superconnection} on a smooth graded vector bundle $V$ over a smooth manifold $M$ is $(-1)^k$ times an ordinary connection on each graded piece $V^k$ of $V$ together with a sequence of endomorphism valued forms:
\[
	A_p\in\Omega^p(M,\End^{1-p}(V))
\]
of total degree 1 for $p\ge0$. For example,
\[
	A_1=\sum f^i dx_i
\]
where $f^i$ are homogeneous degree zero linear endomorphisms $f_k^i:V^k\to V^k$.

Following Chen \cite{Chen73}, \cite{Chen75}, \cite{Chen77}, we define the parallel transport of a superconnection \ref {def: superconnection parallel transport} to be a sequence of forms on the path space $\PathM$ of $M$ with coefficients in the $\Hom(V_{x_s},V_{x_t})$ bundle (for paths from $\g(s)=x_s$ to $\g(t)=x_t$):
\[
	\Psi_p(\g,t,s)\in\Omega^p(\PathM,\Hom^{-p}(W_s,W_t))
\]
satisfying the differential equation
\[
	{\frac\d{\d t}\Psi_p(t,s)_\g=\sum_{i=0}^p(A_{i+1}/t)\Psi_{p-i}(t,s)}.
\]
This is very easy to solve using integrating factors. The notation $A_k/t$ is the contraction \ref {def: contraction map} of the $k$-form $A_k$ to a $k-1$ form on $\PathM$ by restriction to $t$ and internal multiplication with $\g'(t)$:
\[
	A_k/t:=(-1)^{k-1}\iota_{\g'}ev_t ^\ast A_k(\g)=\iota^R_{\g'}ev_t ^\ast A_k(\g)
\]
To eliminate the inconvenient sign we use the right internal product $\iota^R$.

The first section \ref {section 1: standard definitions} of this paper gives several of the standard definitions of a connection and the parallel transport or holonomy of a connection. The holonomy is defined by a differential equation. The solution of the differential equation is given by an iterated integral \ref {subsec: standard iterated integrals} which in turn can be given by a limit of finite products \ref {subsec: standard product limit} of a matrix 1-form with itself.

Section \ref {sec 2: superconnections} explains K.T. Chen's generalization of these constructions to the superconnection setting. The parallel transport or holonomy of a superconnection is defined by the differential equation mentioned earlier \ref {def: superconnection parallel transport}. The solution of this equation is a sum of iterated integrals \ref {subsec: iterated integral for forms}:
\[
	\iint_{t\ge u_1\ge\cdots\ge  u_n\ge s}du_1\cdots du_n\,
	\Phi(t,u_1)(A_{k_1}/u_1)\Phi(u_1,u_2)(A_{k_2}/u_2)\cdots\Phi(u_n,s)
\]
where $\Phi(t,s)$ is the holonomy of the standard connection from $\g(s)$ to $\g(t)$. This can also be written as a limit of finite products \ref{subsubsec: limit of finite products for superconnections}:
\[
	\Psi(t,s)=\lim_{\max(\Delta_i t)\to0}\prod_{i=1}^k (I+A/u_i\Delta_it)
\]
where $A=\sum A_i$.

In Section \ref{sec 3: integrating flat superconnections} we show that the superconnection parallel transport gives higher homotopies on families of paths parametrized by cubes:
\[
	\g:I^k\to\PathM
\]
Since these are given by smooth maps $I^{k+1}\to M$, we choose a trivialization of the bundle and write the superconnection as $D=d-A$. Over each $j$-face $c^j$ of $I^k$ we integrate the superconnection parallel transport to get a degree $-j$ endomorphism of the fiber $V$. The $k$ cube $I^k$ has $2k$ codimension-one faces. We show that the sum of the degree $1-k$ endomorphisms over these faces, with appropriate signs, is the coboundary of the degree $-k$ endomorphism given by integrating $\Phi_k$ over $I^k$ if and only if the superconnection is flat (Theorem \ref {thm:flat superconnections give higher homotopies}).

In Section \ref {sec 4: A-infty and twisting} we look at simplices. After a general discussion of twisting cochains with variable boundary maps, we construct Chen's mapping \ref{subsec: integration over simplices}
\[
	\theta_{(k)}:I^{k-1}\to\Path(\Delta^k,v_0,v_k)
\]
given by a piecewise linear epimorphism
\[
	\pi_k\circ\ll_k:I^k\to \Delta^k
\]
and show (Theorem \ref {thm:D is flat iff psi is a twisting cochain}) that the integral of a flat superconnection parallel transport along $\theta_{(k)}$ gives a twisting cochain on the subcomplex $C_\ast(M)$ of the singular chain complex of $M$ generated by smooth simplices $\s:\Delta^k\to M$ in the special case when the bundle is trivial, e.g., if we restrict to local coordinates. In general (Corollary \ref {D is flat iff its transport is A-infty}) this construction gives an $A_\infty$ functor, or equivalently, a differential graded functor on the cobar construction.

The mapping $\pi_k\circ\ll_k$ is degenerate on two faces, since it sends $I^{k-1}\times 0$ to the first vertex $v_0$ and $I^{k-1}\times 1$ to the last vertex $v_k$. The other $2k-2$ faces of $I^{k}$ give the faces of $I^{k-1}$. These go to the $k+1$ faces of $\Delta^k$ by maps $\th_{k}$ and the $k-1$ products of faces indicated below with the map $\theta_{(i)}\times \theta_{(j)}$ where $i+j=k$ and $i=1,\cdots,k-1$ (Lemma \ref {positive face lemma}).
\[
	\theta_{(i)}\times \theta_{(j)}:I^{i-1}\times I^{j-1}\to \Path(\Delta^i,v_0,v_i)\times\Path(\Delta^j,v_i,v_k).
\]
There is a redundancy of 2 due to the fact that the $0$ and $k$-faces are counted twice.
This inductive description of the mapping $\theta_{(k)}$ implies that integration along $\theta_{(k)}$ of the superconnection transport of a flat superconnection gives a twisting cochain since the boundary terms are exactly the $2k$ terms in the definition of a twisting cochain on the simplicial set of smooth simplices in $M$ with the usual diagonal map given by front face and back face. There is a redundancy of 2 in the definition of twisting cochain since the 0-face and $k$-face occur twice. This is also explained in detail in the appendix to \cite{Chen77}.

In Subsection \ref{subsec: A-infty functors}, we see what happens globally when we do not have a system of local coordinates. In that case, there is a different chain complex $V_x$ over every point $x\in M$ and, given an ordinary flat connection on $V$, we would make $V_x$ into a functor on the path groupoid of $M$. In general, we don't have a flat connection but we have a flat superconnection in many cases. For example, the fiberwise deRham cohomology bundle has a natural flat superconnection as mentioned earlier. Integration of this flat superconnection gives a $A_\infty$ functor which is a twisting cochain with twisted coefficients (Corollary \ref {D is flat iff its transport is A-infty}). Since the cobar construction gives the universal $A_\infty$ functor, we also get a dg functor defined on the cobar construction (Corollary \ref {dg functor on cobar}).

In Section \ref {sec 5: the work of Chen}, we explain the work of K.T. Chen. The current paper might be regarded as a exposition of his work in the language of superconnections where the boundary map for the coefficient system is variable making these coefficients a bundle of chain complexes over the manifold. Since Chen was mainly interested in the homology of the loop space of the manifold, his paths were usually loops. Therefore the boundary map on the coefficients was fixed in his case.

Finally, in Section \ref {sec 6: higher FR torsion}, we give a very brief explanation of how flat superconnections are used in the Morse theoretic construction of higher Reidemeister torsion.

I would like to thank Danny Ruberman and Ivan Horozov for several helpful conversations about iterated integrals. I would also like to thank Jonathan Block for explaining to me his sign convention for superconnection. I should also thank Florin Dumitrescu for explaining to me his paper \cite{Dumitrescu07} on parallel transport of superconnections on supermanifolds. This paper is based on lectures given at Brandeis University and at the Topology and Analysis in interaction conference at Cortona, Italy in June 2008. I would like to thank the organizers of the Cortona conference for inviting me to such a beautiful and stimulating place.

%
%

\section{Standard definitions}\label{section 1: standard definitions}

First I'll go over one of the standard definitions of a connection and its parallel transport given by an iterated integral.


\subsection{Connection}

Suppose that $M^m$ is a smooth ($C^\infty$) $m$-manifold and $\Omega^0(M)=C^\infty(M)$ is the ring of $C^\infty$-functions
\[
	f:M\to\RR.
\]
Then $\Omega^0$ is a contravariant functor from the category of smooth manifolds and smooth maps to the category of $\RR$-algebras. Also $\Omega^0(M)$ is \emph{locally defined} in the sense that it is the space of sections of a smooth vector bundle over $M$, namely the trivial bundle $M\times\RR$.

If $V\to M$ is a smooth vector bundle then $\Omega^0(M,V)=\Gam V$ denotes the space of smooth sections of $M$. This is an $\Omega^0(M)$-module and we say that an $\Omega^0(M)$-module is \emph{locally defined} if it is isomorphic to $\Gam V$ for some $V$. If we restrict to an open set $U\subseteq M$ over which $V$ is trivial: $V|U\cong U\times\RR^n$, then $\Gam U$ is a trivial $\Omega^0(U)$-module of rank $n$. Therefore, we say that every locally defined $\Omega^0(M)$-module is ``locally free.''

Let $\Omega^1(M)$ be the $\Omega^0(M)$-module of smooth $1$-forms on $M$. This is locally defined since it is the space of sections of the cotangent bundle $\Tan^\ast M$ of $M$. In local coordinates a $1$-form is given by
\[
	\a=a_1dx_1+\cdots a_mdx_m.
\]
where $a_1,\cdots,a_m$ are smooth functions on $U\subseteq M$.

We have the map
\[
	d:\Omega^0(M)\to \Omega^1(M)
\]
given locally by
\[
	df=\frac{\d f}{\d x_1}dx_1+\cdots +\frac{\d f}{\d x_m}dx_m.
\]
The map $d$ is $\RR$-linear and satisfies the \emph{Leibnitz rule}
\[
	[d,f]=df
\]
which is a commutator relation of operations where the operation $f$ is multiplication by $f$:
\[
	[d,f](g):=d(f\cdot g)-f\cdot dg=df\cdot g
\]

If $V\to M$ is any smooth vector bundle,
$\Omega^1(M,V)$ is the $\Omega^0(M)$-module of 1-forms on $M$ with coefficients in $V$. This is canonically a tensor product:
\[
	\Omega^1(M,V)\cong\Gam V\, \what\otimes\,\Omega^1(M)=\Gam V\otimes_{\Omega^0(M)}\Omega^1(M)
\]
where we use $\what\otimes$ to denote tensor product over $\Omega^0(M)=C^\infty(M)$. If we use the correspondence between operations on vector bundles $V,W$ over $M$ and operations on corresponding $\Omega^0(M)$-modules $\Gam V,\Gam W$ given by
\begin{enumerate}
\item $\Gam V\, \what\otimes\,\Gam W\cong \Gam (V\otimes W)
$ and
\item $
	\Hom_{\Omega^0(M)}(\Gam V,\Gam W)\cong \Gam \Hom(V,W)\cong\Hom_M(V,W)
$
\end{enumerate}
we get several other isomorphic expressions:
\[
	\Gam V\, \what\otimes\,\Omega^1(M)\cong \Gam(V\otimes\Tan^\ast M)\cong \Gam \Hom(\Tan M,V)\cong\Hom_M(\Tan M,V)
\]
\[
	\cong \Hom_{\Omega^0(M)}(\Gam \Tan M,\Gam V)
\]

\begin{defn}
A \emph{connection} on $V$ is defined to be a linear map
\[
	\nabla:\Omega^0(M,V)\to\Omega^1(M,V)
\]
which satisfies the Leibnitz rule:
\[
	[\nabla,f]=df
\]
for all $f\in\Omega^0(M)$. In other words,
\[
	\nabla(f s)-f\nabla s=(df)s
\]
for all smooth sections $s$ of $V$.
\end{defn}

If $\nabla,\nabla'$ are two connections on $V$, then their difference commutes with multiplication by $f\in\Omega^0(M)$:
\[
	[\nabla-\nabla',f]=df-df=0.
\]
Therefore $\nabla-\nabla'$ is a homomorphism of $\Omega^0(M)$-modules which is the same as an $\End(V)$-valued 1-form on $M$:
\begin{eqnarray*}
	\Hom_{\Omega^0(M)}(\Gam V,\Omega^1(M,V))&\cong& \Hom_{\Omega^0(M)}(\Gam V,\Gam \Hom(\Tan M,V))
\\
	&\cong& \Gam\Hom(V,\Hom(\Tan M,V))
	\\
	&\cong&\Gam\Hom(\Tan M,\Hom(V,V))
\\
	&\cong& \Omega^1(M,\End(V))
\end{eqnarray*}

If we restrict to a coordinate chart $U\subseteq M$ over which $V$ is trivial: $V|U\cong U\times \RR^n$ then one example of a connection on $V|U$ is given by the differential:
\[
	ds=d(s_1,s_2,\cdots,s_n):=(ds_1,\cdots,ds_n)
\]
The restriction to $U$ of an arbitrary connection on $M$ is therefore given by
\[
	\nabla=d-A
\]
where $A\in \Omega^1(U,\End(V))$ is an $n\times n$ matrix value 1-form. Equivalently, $A=(\a_{ij})$ is an $n\times n$ matrix of 1-forms (over the coordinate chart $U$)
\[
	\a_{ij}=\sum_k a_{ij}^kdx_k\in \Omega^1(U), \quad a^k_{ij}\in \Omega^0(U).
\]
Note that $A$ extends uniquely to a linear map, also called $A$,
\[
	A:\Omega^\ast(M,V)\to \Omega^\ast(M,V)
\]
which commutes with the right action of $\Omega^\ast(M)$ and graded commutes with the left action, action being given by right and left multiplication, i.e.,
\[
	A\b(v \a)=(Av)\b\a=(-1)^{|\a|+|\b|}A(v \a)\b=(-1)^{|\b|}\b A(v\a)
\]
for all $\a,\b\in\Omega^\ast(M)$ and $v\in \Gam V$.


\subsection{Superconnections}

Now suppose that $V$ is a $\ZZ$-graded vector bundle over $M$
\[
	V=\bigoplus_{k\in\ZZ} V^k.
\]
Then, by a \emph{graded connection} on $V$ we mean a sequence of operators 
\[
	\nabla_k:\Omega^0(M,V^k)\to\Omega^1(M,V^k)
\]
satisfying the \emph{graded Leibnitz rule}
\[
	[\nabla_k,f]s=\nabla_k(fs)-f\nabla_k s=(df)s=(-1)^ksdf\in\Gam V^k\wotimes\Omega^1(M).
\]
(In other words, $(-1)^k\nabla_k$ is a connection on $V^k$.)
We write this sequence of operators as an infinite sum
\[
	\nabla=\sum \nabla_k.
\]
Locally, this has the form
$
	\nabla=d-A
$ where
\[
	A\in \Omega^1(M,\End^0(V))
\]
where $\End^p(V)$ denotes the degree $p$ endomorphism bundle of $V$:
\[
	\End^p(V)=\prod_k\Hom(V^k,V^{k+p}).
\]
\begin{defn}[BismutLott]\label{def: superconnection}
A \emph{superconnection} on $V$ is defined to be a linear map
\[
	D:\Gam V\to \prod_{p=0}^m\Omega^p(M,V)
\]
of total degree 1:
\[
	D(\Gam V^k)\subseteq \prod_{p=0}^m\Omega^p(M,V^{k+1-p})
\]
which satisfies the graded Leibnitz rule
\[
	[D,f]=df
\]
for all $f\in\Omega^0(M)$.
\end{defn}

As before, the difference of two superconnections is a homomorphism of $\Omega^0(M)$-modules. Also, any graded connection $\nabla$ on $V$ satisfies this condition. Therefore, superconnections are given by
\[
	D=\nabla-A_0-A_2-A_3-\cdots-A_m
\]
where $\nabla$ is a graded connection on $V$,
\[
	A_p\in\Omega^p(M,\End^{1-p}(V))=\End^{1-p}(V)\wotimes\Omega^p(M)
\]
and we write elements of the product as a sum. Also, as before, $A_p$ extends uniquely to an operator on $\Omega^\ast(M,V)$ which commutes with the right action of $\Omega^\ast(M)$ and graded commutes with the left action.

Note that this decomposition of $D$ is unique since $-A_p$ is the $p$-form component of $D$ for all $p\neq1$.


\subsection{Parallel transport} If $\nabla$ is a connection on a vector bundle $V$ over $M$ and $\g:I\to M$ is a smooth path then by \emph{parallel transport} along $\g$ we mean a family of linear isomorphisms
\[
	\Phi_t:V_{\g(0)}\to V_{\g(t)}
\]
which is ``horizontal'' in the sense that, for each $v\in V_{\g(0)}$, we get a section of $V$ over $\g$ which is tangent to the horizontal distribution defined by $\nabla$.

By a \emph{horizontal distribution} we mean a decomposition of the tangent bundle of $V$ into a vertical and horizontal component at each point. The vertical component is the kernel of the projection $\Tan V\to \Tan M$. The horizontal component is uniquely given by the property that it is tangent to any section $s\in\Gam V$ satisfying
\[
	\nabla s(x)=0.
\]
In local coordinates this says $ds(x)=As(x)$ or
\[
	\frac{\d s_i}{\d x_k}(x)=\sum_j a_{ij}^ks_j(x)
\]
for all $k$ if $A=(\a_{ij})=\left(\sum_ka_{ij}^kdx_k\right)$. This is a linear condition on the 1-jet of $s$. Therefore, it is independent of the choice of local trivialization and defines a section (i.e., right inverse) of the tangent map:
\[
	\Tan_{s(x)}V\to \Tan_xM.
\]
Since $v=s(x)$ can be chosen arbitrarily, this defines a horizontal distribution on $V$. Conversely, the horizontal distribution determines the value of each function $a_{ij}^k$ at every point and, therefore, no two connections give the same distribution.

Suppose that $\g$ is an embedded path and we choose local coordinates so that $x_0=t$ and the other coordinates are constant along $\g$. (We call the latter ``transverse coordinates'' for $\g$.) Then the horizontal condition is:
\[
	\frac{\d s_i}{\d t}=\sum_j a_{ij}^0s_j
\]
If we write $\nabla=d-A$ and $A=\sum A^kdx_k$ where $A^k\in\Gam_U\End(V)$ this condition is
\[
	\boxed{\frac\d{\d t}\Phi_t=A^0(t)\Phi_t.}
\]
This is a first order linear differential equation with initial condition
\[
	\Phi_0=I_n
\]
the $n\times n$ identity matrix.

If $A^0(t)$ were constant, the solution of this equation would be
\[
	\Phi_t=e^{tA^0}=\sum_{k=0}^\infty \frac{t^k}{k!}(A^0)^k
\]
In general the parallel transport map $\Phi_t$ is given by an iterated integral.


\subsection{Chen's iterated integrals I} There are two ways to define iterated integrals: 
\begin{enumerate}
\item as an infinite sum of integrals or
\item as a limit of finite products.
\end{enumerate}
The second definition will be easier to generalize.


\subsubsection{as infinite sum of integrals}\label{subsec: standard iterated integrals}

\begin{defn}
Let $A(t),t\in I$ be a continuous family of $n\times n$ matrices. Then the \emph{iterated integral} of $A$ from $s$ to $t$ is defined by
\[
	\Phi(t,s)=\sum_{k=0}^\infty \iint_{t>t_1>t_2>\cdots>t_k>s}A(t_1)A(t_2)\cdots A(t_k)dt_1dt_2\cdots dt_k
	\]
	\[=I_n+\int_s^t A(t_1)dt_1 +\iint_{t>t_1>t_2>s}A(t_1)A(t_2)dt_1dt_2+\cdots
\]
If $A$ is a continuous matrix 1-form on $U$ and $\g:[s,t]\to U$ is a $C^1$ path then the iterated integral of $A$ along $\g$ is defined to be the iterated integral of the continuous family of matrices
\[
	A^\ast(t)=A_{\g(t)}\g'(t)
\]
where $A_{\g(t)}\g'(t)\in\End(V_{\g(t)})$ is the value of $A_{\g(t)}\in \End(V_{\g(t)})\otimes\Tan^\ast_{\g(t)}U $ on the velocity vector $\g'(t)\in \Tan_{\g(t)}U$.
\end{defn}
Note that since $A(u)$ is bounded by, say $||A(u)||\le B$ for all $t\ge u\ge s$, the $k$th term in the series is bounded by
\[
	\frac{(t-s)^k}{k!}B^k.
\]
So, the series converges absolutely and
\[
	||\Phi(t,s)||\le e^{(t-s)B}.
\]
\begin{lem}\label{Phi is bounded} For any $t\ge u\ge s$ we have the following.
\[
\Phi(t,s)=\Phi(t,u)\Phi(u,s)
\]
\[
\Phi(t,s)=I_n+(t-s)A(u)+o(t-s)=I+A(u)\Delta t+o(\Delta t)
\]
\end{lem}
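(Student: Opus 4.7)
My plan is to handle the two assertions separately; both reduce to routine calculus once the absolute-convergence bound $\|\Phi(t,s)\| \leq e^{(t-s)B}$ (already noted just above the lemma) is in hand. The first statement is the composition law for a propagator and the second is the one-term Taylor expansion, so the work is really in identifying $\Phi(t,s)$ as the fundamental solution of a linear ODE.

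For the composition law $\Phi(t,s) = \Phi(t,u)\Phi(u,s)$, the cleanest route is the ODE argument. I would first show
$$\frac{d}{dt}\Phi(t,s) = A(t)\Phi(t,s), \qquad \Phi(s,s) = I_n,$$
by differentiating the defining series term by term with respect to the upper limit $t$: setting $t_1 = t$ in the $k$-th integral pulls $A(t)$ out on the left and leaves the $(k-1)$-th term of $\Phi(t,s)$. Term-by-term differentiation is legal because of the uniform bound $(t-s)^k B^k / k!$ on the $k$-th term. The function $t \mapsto \Phi(t,u)\Phi(u,s)$ then solves the same ODE on $[u,\infty)$ and takes the value $\Phi(u,s)$ at $t=u$, matching $t \mapsto \Phi(t,s)$ there, so uniqueness of solutions of linear ODEs gives the identity. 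I would note that a purely combinatorial proof is also available, obtained by partitioning the simplex $\{t > t_1 > \cdots > t_k > s\}$ according to how many of the $t_i$ exceed $u$; this is really the source of the identity and may be useful later when the analogous statement is proved for superconnections.

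For the asymptotic expansion I would split $\Phi(t,s)$ at $k=1$. The tail with $k \geq 2$ is bounded by $\sum_{k \geq 2}(t-s)^k B^k / k! = O((t-s)^2) = o(t-s)$. The $k=1$ term is $\int_s^t A(r)\,dr$, and continuity of $A$ at $u$ together with $u \in [s,t]$ gives
$$\Bigl\|\int_s^t (A(r) - A(u))\,dr\Bigr\| \leq (t-s)\sup_{r \in [s,t]} \|A(r) - A(u)\| = o(t-s),$$
so $\int_s^t A(r)\,dr = (t-s)A(u) + o(t-s)$. Combining gives the first expansion, and the second form is merely a rewrite with $\Delta t = t - s$.

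The only technical point is the term-by-term differentiation used in part~(1), and the uniform bounds make this standard; no genuine obstacle arises. The entire lemma can be summarized as the statement that the iterated-integral series is the fundamental solution of $\dot{X} = A(t)X$, viewed from two complementary angles (functional equation and infinitesimal expansion).
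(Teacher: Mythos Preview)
Your proof is correct, but for the composition law you take a different route from the paper. The paper argues directly from the definition of the iterated integral: for each $k$, partition the simplex $\{t>t_1>\cdots>t_k>s\}$ according to how many of the $t_i$ lie above $u$, and observe that because the $dt_i$ commute with the matrices $A(t_j)$ the resulting pieces reassemble into $\sum_{p+q=k}(\text{$p$-th term of }\Phi(t,u))\cdot(\text{$q$-th term of }\Phi(u,s))$. You instead first establish the ODE $\frac{d}{dt}\Phi(t,s)=A(t)\Phi(t,s)$ by term-by-term differentiation and then invoke uniqueness. Both are sound, but note that in the paper's layout the ODE is the content of the \emph{next} result (Proposition~\ref{prop:dPhi}), and the paper in fact derives it from this lemma; your argument reverses that logical order, which is fine but worth being aware of. You do mention the combinatorial partition as an alternative, and that is exactly the paper's method.

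For the asymptotic expansion your argument is essentially identical to the paper's: bound the tail $k\ge2$ by $O((t-s)^2)$ via the exponential estimate, and use continuity of $A$ at $u$ to replace $\int_s^t A(r)\,dr$ by $(t-s)A(u)$ up to $o(t-s)$.
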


\begin{proof} The first statement follows from the definition of the iterated integral and the fact that $A(t_j)dt_i=dt_iA(t_j)$.
The second condition follows from the fact that $A(u)$ converges uniformly to $A(t)$ as $s\to t$ and the terms after the first two terms in the series for $\Phi(t,s)$ are bounded by a constant times $(t-s)^2$ since
\[
	e^{(t-s)B}-1-(t-s)B\le (t-s)^2B^2e^{(t-s)B}
\]
if $B$ is a uniform bound for $||A(u)||$.
\end{proof}

\begin{prop}\label{prop:dPhi} $\Phi(t,s)$ satisfies the differential equation (boxed above) and therefore gives the parallel transport of the connection $\nabla=d-A$ on $V$. In fact,
\[
	d\Phi(t,s)=A(t)\Phi(t,s)dt-\Phi(t,s)A(s)ds
\]
\[
	=A_{\g(t)}\Phi(t,s)d\g(t)-\Phi(t,s)A_{\g(s)}d\g(s).
\]
\end{prop}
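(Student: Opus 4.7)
The plan is to extract the two partial derivatives $\partial_t\Phi(t,s)$ and $\partial_s\Phi(t,s)$ separately from the two conclusions of Lemma \ref{Phi is bounded}, and then combine them into the total differential. The boxed ODE is just the $\partial_t$ statement at $s=0$, so once the total differential is in hand, the original ODE follows as a special case.

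First I would compute $\partial_t\Phi(t,s)$ as follows. The cocycle identity gives
\[
\Phi(t+h,s)-\Phi(t,s)=\bigl(\Phi(t+h,t)-I\bigr)\Phi(t,s).
\]
By the second part of Lemma \ref{Phi is bounded}, $\Phi(t+h,t)=I+A(t)h+o(h)$. Dividing by $h$ and letting $h\to 0$ yields $\partial_t\Phi(t,s)=A(t)\Phi(t,s)$. At $s=0$ this is the boxed differential equation, and combined with $\Phi(s,s)=I_n$ (the $k=0$ term of the series) it pins down the parallel transport of $\nabla=d-A$ by uniqueness of solutions to first order linear ODE's.

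Next I would compute $\partial_s\Phi(t,s)$. Writing $\Phi(t,s)=\Phi(t,s+h)\Phi(s+h,s)$, I would invert the factor $\Phi(s+h,s)=I+A(s)h+o(h)$; for $h$ small enough this is invertible with $\Phi(s+h,s)^{-1}=I-A(s)h+o(h)$ (a geometric series argument, using the uniform bound from before Lemma \ref{Phi is bounded}). Then
\[
\Phi(t,s+h)-\Phi(t,s)=\Phi(t,s)\bigl(\Phi(s+h,s)^{-1}-I\bigr)=-\Phi(t,s)A(s)h+o(h),
\]
so $\partial_s\Phi(t,s)=-\Phi(t,s)A(s)$. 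If I wanted to sidestep inverting, I could instead differentiate the series defining $\Phi(t,s)$ term by term: the $k$-th term is an iterated integral whose lower limit is $s$, and differentiation in the lower limit introduces a minus sign and evaluates the innermost variable at $s$, producing $-\Phi(t,s)A(s)$ after summing over $k$; termwise differentiation is justified by the uniform bound on the tail of the series.

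Adding the two partials gives the total differential
\[
d\Phi(t,s)=A(t)\Phi(t,s)\,dt-\Phi(t,s)A(s)\,ds.
\]
For the second displayed form I would just unpack the definition $A(t)=A_{\g(t)}\g'(t)$, so that $A(t)\,dt=A_{\g(t)}\g'(t)\,dt=A_{\g(t)}\,d\g(t)$, and likewise $A(s)\,ds=A_{\g(s)}\,d\g(s)$. The main obstacle I expect is purely bookkeeping: making sure the order of the matrix factors on each side of $\Phi(t,s)$ is preserved (the $A(t)$ on the left and the $A(s)$ on the right are forced by the cocycle identity, not by any symmetry), and justifying the inversion step (or the termwise differentiation) without hand-waving. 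The signs and placement are exactly what one needs later for Stokes' theorem to yield the higher-homotopy relations of Section \ref{sec 3: integrating flat superconnections}, so care at this step pays off.
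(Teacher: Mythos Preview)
Your proposal is correct and follows essentially the same route as the paper: both use the cocycle identity from Lemma~\ref{Phi is bounded} together with the estimate $\Phi(t,s)=I+A(u)\Delta t+o(\Delta t)$ to extract the two partial derivatives. The only cosmetic difference is in the $\partial_s$ step: the paper simply rearranges $\Phi(t,s+\Delta s)\bigl(I+A(s)\Delta s+o(\Delta s)\bigr)=\Phi(t,s)$ to read $\Phi(t,s+\Delta s)-\Phi(t,s)=-\Phi(t,s+\Delta s)A(s)\Delta s+o(\Delta s)$, avoiding the explicit inversion you introduce, but the content is identical.
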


\begin{proof}
$\Phi(t+\Delta t,s)=\Phi(t+\Delta t,t)\Phi(t,s)=(I+A(t)\Delta t+o(\Delta t))\Phi(t,s)$ by the lemma above. So,
\[
	\Phi(t+\Delta t,s)-\Phi(t,s)=A(t)\Phi(t,s)\Delta t+o(\Delta t).
\]
Similarly,
\[
	\Phi(t,s+\Delta s)(I+A(s)\Delta s+o(\Delta s))=\Phi(t,s).
\]
So,
\[
	\Phi(t,s+\Delta s)-\Phi(t,s)=-\Phi(t,s+\Delta s)A(s)\Delta s+o(\Delta s)
\]
and the proposition follows.
\end{proof}


\subsubsection{as limit of finite products}\label{subsec: standard product limit}
Consider a $C^1$ path $\g:[s,t]\to M$ with image in a coordinate chart $U$. Take all possible partitions of the interval $[s,t]$ into subintervals by choosing $k\ge1$ and
\[
	t=t_0>t_1>t_2>\cdots>t_k=s.
\]
The \emph{mesh} of this partition is the maximum value of $t_{i-1}-t_{i}$. On each subinterval choose a point
\[
	u_i\in[t_i,t_{i-1}].
\]
Then the $i$th increment of $\g$ is
\[
	\Delta_i\g:=\g(t_{i-1})-\g(t_i)=(t_{i-1}-t_i)\g'(u_i)+o(\mesh(t_\ast))
\]
where $\g'(u_i)$ is the velocity vector of $\g$. Note that $o(\mesh(t_\ast))$ goes to zero faster than $1/k$.

\begin{prop}\label{prop:parallel transport as limit of finite products}
Parallel transport by $\nabla=d-A$ is given by
\[
	\Phi(t,s)=\lim_{\mesh(t_\ast)\to0}\prod_{i=1}^k (I+A_{\g(u_i)}\Delta_i\g)
\]
where $A_{\g(u_i)}\Delta_i\g$ is the value of $A_{\g(u_i)}$ on the vector $\Delta_i\g$. The terms in the product are ordered from left to right.
\end{prop}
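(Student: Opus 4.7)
The strategy is to exploit the two facts already proved in Lemma \ref{Phi is bounded}: the cocycle identity $\Phi(t,s)=\Phi(t,u)\Phi(u,s)$, and the first-order expansion $\Phi(t_{i-1},t_i) = I + A(u_i)(t_{i-1}-t_i) + o(t_{i-1}-t_i)$ (where the remainder is in fact $O((t_{i-1}-t_i)^2)$, as we read off from the series bound). Iterating the cocycle identity along the partition $t=t_0>t_1>\cdots>t_k=s$ gives the exact factorization
\[
\Phi(t,s) \;=\; \prod_{i=1}^k \Phi(t_{i-1},t_i),
\]
so the proposition reduces to comparing this product with the product of the linearized factors $Q_i := I + A_{\g(u_i)}\Delta_i\g$.

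Next I would replace $A(u_i)(t_{i-1}-t_i)$ by $A_{\g(u_i)}\Delta_i\g$. By the definition of $A(u_i)$ this quantity is $A_{\g(u_i)}\g'(u_i)(t_{i-1}-t_i)$, and by the displayed formula for $\Delta_i\g$ just above the proposition, $\Delta_i\g = \g'(u_i)(t_{i-1}-t_i) + o(\mesh(t_\ast))$ uniformly in $i$ (using uniform continuity of $\g'$ on the compact interval). Plugging this into the first-order expansion of $\Phi(t_{i-1},t_i)$ gives
\[
\Phi(t_{i-1},t_i) \;=\; Q_i + \eta_i, \qquad \|\eta_i\| \le (t_{i-1}-t_i)\,\omega\bigl(\mesh(t_\ast)\bigr),
\]
where $\omega(\mesh)\to 0$ as $\mesh\to 0$. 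The second-order term from the series expansion contributes $O((t_{i-1}-t_i)^2)\le (t_{i-1}-t_i)\cdot\mesh(t_\ast)$, which is absorbed into the same estimate.

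The remaining task is to control the accumulation of these errors across the product. I would use a standard telescoping identity
\[
\prod_{i=1}^k \Phi(t_{i-1},t_i) - \prod_{i=1}^k Q_i
\;=\; \sum_{j=1}^k \Phi(t_0,t_1)\cdots \Phi(t_{j-2},t_{j-1})\,\eta_j\,Q_{j+1}\cdots Q_k,
\]
and bound each factor using Lemma \ref{Phi is bounded} (which gives $\|\Phi(t_0,t_{j-1})\|\le e^{(t-s)B}$ with $B$ a uniform bound on $\|A^\ast\|$) and the obvious estimate $\|Q_{j+1}\cdots Q_k\|\le \exp\bigl(B\sum_{i>j}|\Delta_i\g|\bigr)$, also uniformly bounded. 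The sum of errors is then dominated by
\[
e^{2(t-s)B}\sum_{j=1}^k \|\eta_j\| \;\le\; e^{2(t-s)B}\,\omega(\mesh(t_\ast))\,(t-s),
\]
which tends to $0$ as $\mesh(t_\ast)\to 0$, proving the claim.

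The main obstacle is keeping the error bookkeeping clean: the individual factors $\eta_i$ are only $o(t_{i-1}-t_i)$ (not globally $o(\mesh)^2$), so a naive bound summed $k\sim 1/\mesh$ times would blow up. The key point is that the telescoping splits the cumulative error into a sum of $k$ terms whose total size is governed by $\sum(t_{i-1}-t_i)=t-s$ times the uniform factor $\omega(\mesh)\to0$, rather than $k$ times the size of a single error. Once this is recognized the rest is routine.
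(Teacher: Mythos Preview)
Your proof is correct and follows essentially the same approach as the paper: factor $\Phi(t,s)$ using the cocycle identity, replace each factor by its linearization via Lemma~\ref{Phi is bounded}, and argue that the accumulated error vanishes in the limit. The paper compresses your telescoping step into a single sentence (``each of the $k$ $o(\mesh(t_\ast))$ terms is multiplied by something which is bounded; since $o(\mesh(t_\ast))$ goes to zero faster than $1/k$ the sum of these terms goes to zero''), whereas you make the error propagation explicit and correctly identify that the total error is $(t-s)\,\omega(\mesh)$ rather than $k\cdot o(\mesh)$ --- a distinction the paper's phrasing obscures.
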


\begin{proof} By Lemma \ref{Phi is bounded} we know that $\Phi(t,s)$ is given by the product:
\[
	\Phi(t,s)=\prod_{i=1}^k \Phi(t_{i-1},t_i)
\]
and each factor has the estimate
\[
	\Phi(t_{i-1},t_i)=I+(t_{i-1}-t_i)A_{\g(u_i)}\g'(u_i)+o(t_{i-1}-t_i)
\]
\[
	=I+A_{\g(u_i)}\Delta_i\g+o(\mesh(t_\ast)).
\]
Each of the $k$ $o(\mesh(t_\ast))$ terms is multiplied by something which is bounded. Since $o(\mesh(t_\ast))$ goes to zero faster than $1/k$ the sum of these terms goes to zero as $\mesh(t_\ast))\to 0$. The proposition follows.
\end{proof}

\begin{cor}
Parallel transport is invariant under reparametrization of $\g$. In other words, if $\f$ is a diffeomorphism of $I$ then the parallel transport of $\g$ from $\g(\f(s))$ to $\g(\f(t))$ is the same as the parallel transport of $\g\circ\f$ between the same two points of $M$:
\[
	\Phi_\g(\f(t),\f(s))=\Phi_{\g\circ\f}(t,s).
\]
\end{cor}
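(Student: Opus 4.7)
The plan is to invoke Proposition \ref{prop:parallel transport as limit of finite products} and observe that the finite products approximating $\Phi_{\g\circ\f}(t,s)$ and $\Phi_\g(\f(t),\f(s))$ coincide term-by-term under the bijection of partitions induced by $\f$. The corollary will then follow at once, since the product formula only sees the image of the path together with its sampled points, data which is intrinsic to $\g$ as a map and independent of parametrization.

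Assume first that $\f$ is orientation-preserving, so $s<t$ implies $\f(s)<\f(t)$. Given a partition $t=t_0>t_1>\cdots>t_k=s$ of $[s,t]$ with sample points $u_i\in[t_i,t_{i-1}]$, set $\t_i:=\f(t_i)$ and $v_i:=\f(u_i)$. These define a partition $\f(t)=\t_0>\t_1>\cdots>\t_k=\f(s)$ of $[\f(s),\f(t)]$ with $v_i\in[\t_i,\t_{i-1}]$, and
\[
\Delta_i(\g\circ\f)=(\g\circ\f)(t_{i-1})-(\g\circ\f)(t_i)=\g(\t_{i-1})-\g(\t_i)=\Delta_i\g,
\]
while $A_{(\g\circ\f)(u_i)}=A_{\g(\f(u_i))}=A_{\g(v_i)}$. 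Hence the two finite products appearing in the limit formula of Proposition \ref{prop:parallel transport as limit of finite products} are literally equal. Since $\f$ and $\f^{-1}$ are uniformly continuous on the compact interval $I$, we have $\mesh(t_\ast)\to 0$ iff $\mesh(\t_\ast)\to 0$, so the two limits agree and the identity follows.

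If $\f$ reverses orientation, one may reduce to the previous case by using the cocycle identity of Lemma \ref{Phi is bounded} to invert both sides; alternatively, observe that both $\Phi_\g(\f(t),\f(s))$ and $\Phi_{\g\circ\f}(t,s)$, viewed as functions of $t$ with $s$ fixed, satisfy the same first-order linear ODE from Proposition \ref{prop:dPhi} (by the chain rule the two velocity terms $A_{\g(\f(t))}\g'(\f(t))\f'(t)$ and $A_{(\g\circ\f)(t)}(\g\circ\f)'(t)$ agree) with common initial value $I$ at $t=s$, so uniqueness forces agreement for any diffeomorphism $\f$. The main obstacle here is really only orientation bookkeeping; once the bijection of partitions is set up the argument is essentially tautological.
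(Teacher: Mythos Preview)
Your proof is correct and follows essentially the same approach as the paper: both invoke the product formula of Proposition~\ref{prop:parallel transport as limit of finite products} and observe that the approximating products depend only on the sampled points $\g(t_i)\in M$, which are unchanged under reparametrization. The paper's version is a one-line remark (choosing $u_i=t_i$ so that the product visibly depends only on the sequence $x_i=\g(t_i)$), whereas you spell out the partition bijection and treat the orientation-reversing case separately; this extra care is fine but not strictly needed.
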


\begin{proof}
The product formula for $\Phi(t,s)$ with $u_i=t_i$ depends only on the sequence of points $x_i=\g(t_i)\in M$.
\end{proof}

%
%

\section{Parallel transport of a superconnection}\label{sec 2: superconnections}

In a series of papers (\cite{Chen73},\cite{Chen77}, \cite{Chen}) K.T. Chen constructed the parallel transport of a superconnection
\[
	D=\nabla-A_0-A_2-\cdots-A_m
\]
on a graded vector bundle $V=\bigoplus V^k$ over $M$ in the case when $A_0=0$. However, since $A_0$ does not enter into the definition or construction of the parallel transport, this is not a restriction. The parallel transport is a $\Hom$-valued differential form on the space of smooth paths $\g:I\to M$. So, first we need to consider the path space $\PathM$ of $M$. Then we need to consider differential forms on $\PathM$. 

We need to consider smooth families of paths and converging sequences of differential forms at a fixed path. So, we need a ``smooth structure'' on $\PathM$ and a 1st countable topology on the fibers of bundles over $\PathM$. We will consider these two structures separately.


\subsection{The space of paths in $M$}

Let $\PathM$ be the space of all smooth ($C^\infty$) paths $\g:I\to M$ and let $\Path(M,x_0,x_1)$ be the subspace of smooth path $\g$ from $\g(0)=x_0$ to $\g(1)=x_1$. We give these path spaces the $C^1$-topology. If we assume that $M$ is smoothly embedded in Euclidean space, the $C^1$-topology on $\PathM$ is a metric topology given by
\[
	d(\g_0,\g_1)=\max_{t\in I}(||\g_1(t)-\g_0(t)||+||\g_1'(t)-\g_0'(t)||)
\]
I.e., $\g_0,\g_1\in\PathM$ are close if their \emph{$1$-jets} $(\g_i(t),\g_i'(t))$ are close at all $t\in I$.

We also have a ``smooth structure'' on the path space given by the collection of all smooth families of paths. A mapping $f$ from a smooth manifold $U$ into $\PathM $ is called \emph{smooth} if its adjoint
\[
	\what f:U\times I\to M
\]
given by $\what f(u,t)=f(u)(t)$
is smooth. Chen called such mappings ``plots'' and developed their properties axiomatically. We will not need this level of generality. The collections of smooth manifolds over $\PathM$ forms a category where a morphism $(U,f)\to (V,g)$ is a smooth map $h:U\to V$ so that $gh=f$. Smooth mappings into $\PathM$ are clearly continuous in the $C^1$-topology.

\def\ff{\varphi}

One particularly useful family of deformations of any smooth path $\g$ is given as follows. The pull-back of the tangent bundle of $M$ along $\g$ is an $m$-plane bundle over $I$ which is trivial since $I$ is contractible. Thus $\g:I\to M$ is covered by a map of vector bundles $\tilde\g:I\times\RR^m\to \Tan M$. If we let $U$ be an open $\e$-ball around 0 in $\RR^m$ and we choosing a Riemannian metric and exponential map for $M$, we get a smooth map $\ff:I\times U \to M$ by
\[
	\ff(t,u)=exp_{\g(t)}\tilde\g(t,u).
\]
By construction, this smooth map will have the property that it takes $t\times U$ diffeomorphically onto an open neighborhood of $\g(t)$ in $M$. This means that any small deformation $\eta$ of $\g$ can be written as the composition of $\ff$ with an embedding $t\mapsto(t,\tilde\eta(t))$:
\[
	\eta(t)=\ff(t,\tilde\eta(t))
\]
where $\tilde\eta:I\to U$ is uniquely determined. Thus, we call $\ff$ a \emph{universal deformation} of $\g$. By factoring $\g$ through a universal deformation, we can usually assume that $\g$ is an embedding and that we have a system of local coordinates in a neighborhood of the image of $\g$.

We will insist that our constructions on the path space are not only continuous but smooth in the sense that they are smooth on smooth families of path. For example, we have the following.

\begin{defn}
A mapping $f:\PathM\to\RR$ will be called \emph{smooth} if it is continuous in the $C^1$-topology and if, for any smooth mapping $\f:U\to \PathM$, the composition $f\circ\f:U\to\RR$ is a smooth function. The space of all smooth real valued functions on $\PathM$ will be denoted $\Omega^0(\PathM)$.
\end{defn}


\subsection{Bundles over the path space}

We want to consider the tangent bundle $\Tan\PathM$, cotangent bundle $\Tan^\ast\PathM$ and differential forms $\Omega^p(\PathM)$ on the path space.

\begin{defn}
The \emph{tangent bundle} $\Tan\PathM $ of $\PathM $ is defined to be the vector bundle whose fiber over $\g\in\PathM $ is the vector space of all smooth sections $\eta:I\to \Tan M$ of the tangent bundle of $M$ along $\g$. We call these \emph{vector fields along $\g$}. Since $\eta$ determines $\g$ we can topologize $\Tan\PathM$ as a subspace of the path space of the tangent bundle of $M$ with the $C^1$ path space topology.
\end{defn}

The space $\Tan_\g\PathM$ of vector fields along $\g$ is a topological vector space whose topology, defined above, can also be given by an equivalence class of norms as we now explain.


\subsubsection{norm topology}

The algebra $\LL=\Omega^0(I)$ of all smooth maps $f:I\to \RR$ has a norm given by
\[
	||f||:=\max_{t\in I} (|f(t)|+|f'(t)|).
\]
By the product rule, this norm has the property that
\[
	||f\cdot g||\le ||f||\cdot||g||
\]
for all $f,g\in\LL$. The free module $\LL^n$ has a norm:
\[
	||F||:=\max_{t\in I}(||F(t)||+||F'(t)||)
\]
where $||F(t)||,||F'(t)||$ are the Euclidean norms. E.g.,
\[
	||F(t)||=\sqrt{\sum F_i(t)^2}.
\]
This norm has the property that
\[
	||gF||\le ||g||\cdot||F||
\]
for all $g\in\LL,F\in\LL^n$. Consequently:
\begin{lem}
Any $\LL$-linear homomorphism $\LL^n\to \LL^m$ is bounded in this norm.
\end{lem}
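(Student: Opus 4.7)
The plan is to reduce the statement to finite data, namely the images of the standard basis vectors, and then apply the two submultiplicativity estimates already recorded in the excerpt.

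First, I would fix an $\LL$-linear map $\phi : \LL^n \to \LL^m$ and let $v_j = \phi(e_j) \in \LL^m$ for $j = 1,\dots,n$, where $e_1,\dots,e_n$ is the standard $\LL$-basis of $\LL^n$. Any $F \in \LL^n$ decomposes uniquely as $F = \sum_{j=1}^n f_j e_j$ with $f_j \in \LL$ the coordinate functions, and $\LL$-linearity gives $\phi(F) = \sum_{j=1}^n f_j v_j$. Applying the triangle inequality together with the submultiplicativity $\|g F'\|\le \|g\|\cdot \|F'\|$ noted just above the lemma, I get
\[
	\|\phi(F)\| \le \sum_{j=1}^n \|f_j v_j\| \le \sum_{j=1}^n \|f_j\|\cdot\|v_j\|.
\]

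Next I would bound each $\|f_j\|$ by $\|F\|$. Since $f_j(t)$ is the $j$th Euclidean coordinate of $F(t)$, we have $|f_j(t)| \le \|F(t)\|$ and similarly $|f_j'(t)|\le \|F'(t)\|$, so $\|f_j\| = \max_t(|f_j(t)|+|f_j'(t)|)\le \max_t(\|F(t)\|+\|F'(t)\|) = \|F\|$. Substituting this in yields
\[
	\|\phi(F)\| \le \Bigl(\sum_{j=1}^n \|v_j\|\Bigr)\|F\| = C_\phi\, \|F\|,
\]
with $C_\phi := \sum_j \|v_j\|$ a constant depending only on $\phi$. This proves boundedness.

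There is no genuine obstacle here; the only mild subtlety is making sure the coordinate-projection estimate $|f_j(t)|\le \|F(t)\|$ is valid for the derivative as well, which holds because differentiation in $\LL^n$ is componentwise and the Euclidean norm dominates each component. Everything else is a direct application of the two norm inequalities stated immediately before the lemma.
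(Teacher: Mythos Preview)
Your argument is correct and is exactly the standard expansion of the paper's one-word ``Consequently'': the paper states the lemma without proof, relying on the submultiplicativity $\|gF\|\le\|g\|\cdot\|F\|$ and the remark that a $\LL$-linear map is given by a matrix of elements of $\LL$, which is precisely what you unpack via the basis decomposition. There is nothing to add.
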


We note that a $\LL$-linear map $\LL^n\to \LL^m$ is given by a smooth family of $m\times n$ real matrices parametrized by $I$ since a matrix of smooth functions on $I$ is the same as a smooth matrix function on $I$. This implies the following.

\begin{prop}
Any finitely generated free $\LL$ module has a unique 1st countable topology with the property that any isomorphism with $\LL^n$ is a homeomorphism.
\end{prop}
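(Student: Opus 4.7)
The plan is to build the topology by transport of structure from $\LL^n$ and then use the preceding lemma to see that the result does not depend on the transport chosen. Let $F$ be a finitely generated free $\LL$-module. First I would note that the rank of $F$ is a well-defined integer $n$: evaluation at a point $t\in I$ gives a surjective ring homomorphism $\LL\to\RR$ with kernel a maximal ideal $\mathfrak m$, so if $F\cong\LL^n$ and $F\cong\LL^{n'}$ then reducing mod $\mathfrak m$ yields $\RR^n\cong\RR^{n'}$, forcing $n=n'$.

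For existence, pick any $\LL$-linear isomorphism $\ff_0:F\to\LL^n$ and declare a subset $U\subseteq F$ to be open exactly when $\ff_0(U)$ is open in the norm topology on $\LL^n$. This is clearly first countable since the norm topology on $\LL^n$ is metrizable. To check that every $\LL$-linear isomorphism $\ff:F\to\LL^n$ becomes a homeomorphism with respect to this topology, consider the composite
\[
\ff\circ\ff_0^{-1}:\LL^n\to\LL^n.
\]
This is $\LL$-linear, hence given by a smooth $n\times n$ matrix, and by the preceding lemma it is bounded with respect to the norm on $\LL^n$. The same applies to its inverse $\ff_0\circ\ff^{-1}$. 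Hence $\ff\circ\ff_0^{-1}$ is a homeomorphism of $\LL^n$, and since $\ff_0$ is a homeomorphism by construction, so is $\ff$.

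For uniqueness, suppose $\tau_1$ and $\tau_2$ are two topologies on $F$ with the stated property. Then the chosen isomorphism $\ff_0:F\to\LL^n$ is simultaneously a homeomorphism $(F,\tau_1)\to\LL^n$ and $(F,\tau_2)\to\LL^n$, so $\tau_1=\ff_0^{-1}(\text{norm topology})=\tau_2$. The first countability hypothesis is not actually used in this argument; it is only recorded to emphasize that the resulting topology is the natural metrizable one.

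The only step that requires any care is the invocation of the earlier lemma, which encodes the fact that an $\LL$-linear map between free modules is automatically continuous; everything else is a formal verification. I do not expect a real obstacle, but the one point worth being careful about is that the rank $n$ really is an invariant of $F$, which is why the reduction mod a maximal ideal at the start is needed to make the statement ``isomorphism with $\LL^n$'' unambiguous.
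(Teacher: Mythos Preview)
Your argument is correct and is exactly the approach the paper intends: the paper does not spell out a proof but simply says the preceding lemma (that any $\LL$-linear map $\LL^n\to\LL^m$ is bounded) ``implies'' the proposition, and your transport-of-structure argument is the standard way to unpack that implication. Your extra care about well-definedness of the rank is a nice touch the paper omits.
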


\begin{prop}The tangent space $\Tan_\g\PathM$ at $\g$ is a free $\LL$-module of rank $m=\dim M$ and the norm topology is equivalent to the $C^1$ path space topology.
\end{prop}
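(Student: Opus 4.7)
The plan is to establish the two assertions in succession: first build an explicit $\LL$-module isomorphism $\Tan_\g\PathM\cong \LL^m$, and then use this isomorphism to compare the two candidate topologies.

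For the algebraic statement, I would invoke the universal deformation $\ff:I\times U\to M$ of $\g$ constructed earlier in the excerpt, which comes with a covering bundle map $\tilde\g:I\times\RR^m\to\Tan M$. Since $\tilde\g$ restricts to a linear isomorphism $\RR^m\xrightarrow{\cong}\Tan_{\g(t)}M$ depending smoothly on $t\in I$, it trivializes the pullback bundle $\g^\ast\Tan M\to I$. A smooth vector field $\eta$ along $\g$ is then uniquely represented by a smooth function $\tilde\eta:I\to\RR^m$ via $\eta(t)=\tilde\g(t,\tilde\eta(t))$, and the assignment $\eta\mapsto\tilde\eta$ is plainly $\LL$-linear since scalar multiplication on both sides is pointwise. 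This gives $\Tan_\g\PathM\cong \LL^m$ as free $\LL$-modules of rank $m$. (Conceptually this is just the fact that every vector bundle over the contractible interval $I$ is trivial; the universal deformation provides a concrete trivialization.)

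For the topological statement, embed $M\hookrightarrow \RR^N$ so that $\Tan M\subset M\times\RR^N$, and let $\g$, $\tilde\g$, $\tilde\eta$ be realized accordingly. The $C^1$-path topology on $\Tan_\g\PathM\subset\Path(\Tan M)$ is given by the supremum over $t\in I$ of $\|\eta(t)\|+\|\eta'(t)\|$ in the ambient Euclidean norm, while the norm topology on $\LL^m$ uses $\|\tilde\eta(t)\|+\|\tilde\eta'(t)\|$. Under the identification $\eta(t)=\tilde\g(t,\tilde\eta(t))$, the chain rule gives
\[
	\eta'(t)=\partial_1\tilde\g(t,\tilde\eta(t))+\partial_2\tilde\g(t,\tilde\eta(t))\cdot\tilde\eta'(t),
\]
and similarly $\tilde\eta(t)$ and $\tilde\eta'(t)$ can be recovered from $\eta(t),\eta'(t)$ by applying the smooth inverse of the fiberwise linear map $\partial_2\tilde\g(t,0)$ and then the chain rule again. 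Since $I$ is compact and $\tilde\g$ is smooth, the maps $\partial_1\tilde\g$, $\partial_2\tilde\g$ and the fiberwise inverse $\partial_2\tilde\g(t,0)^{-1}$ are all uniformly bounded on $I$. This yields constants $c_1,c_2>0$ such that
\[
	c_1(\|\tilde\eta(t)\|+\|\tilde\eta'(t)\|)\le \|\eta(t)\|+\|\eta'(t)\|\le c_2(\|\tilde\eta(t)\|+\|\tilde\eta'(t)\|)
\]
for all $t\in I$, so the two norms are equivalent and induce the same topology.

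The main thing to watch is the interplay of the two derivatives in the $C^1$ norm under the trivialization: one must check that $\eta'$ controls $\tilde\eta'$ (and conversely) without losing uniformity in $t$. This is exactly where compactness of $I$ and smoothness of $\tilde\g$ (both given) are crucial, so no further substantive ingredient is needed beyond the chain-rule computation above.
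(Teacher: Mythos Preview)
Your proof is correct. The paper actually states this proposition without proof, treating it as an elementary consequence of the preceding discussion, so there is nothing to compare against; your argument supplies the details the paper omits and does so along the expected lines (trivialize $\g^\ast\Tan M$ over the contractible interval, then use compactness of $I$ and the chain rule to compare norms).
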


We recall that a \emph{bounded linear functional} on $\LL^n$ is an $\RR$-linear mapping $\f:\LL^n\to \RR$ so that the ratio $|\f(f)|/||f||$ is bounded for all nonzero $f\in\LL^n$. The least upper bound of this ratio is denoted $||\f||$. Linear functionals on $\LL^n$ are continuous if and only if they are bounded. We will only consider bounded functionals. Thus:
\[
	\Hom(\LL^n,\RR):=\{\f:\LL^n\to\RR\st\text{$\f$ is linear with } ||\f||<\infty\}.
\]

We define the \emph{cotangent space} $\Tan_\g^\ast\PathM$ of $\PathM$ at $\g$ to be the space of continuous linear functionals on the tangent space $\Tan_\g\PathM$:
\[
	\Tan_\g^\ast\PathM=\Hom(\Tan_\g\PathM,\RR).
\] 
We call these ``bounded'' instead of ``continuous'' since ``continuous and smooth'' sounds redundant.

\subsubsection{vector fields and derivatives}

Note that for any smooth manifold $(U,\f)$ over $\PathM$, the map $\f$ is covered by a morphism of vector bundles $\Tan \f:\Tan U\to \Tan\PathM$ given by taking a tangent vector $v$ in $U$ to the vector field
\[
	\eta(t)=\frac\d{\d u}\what\f(uv,t)|_{u=0}.
\]

\begin{prop} Given a smooth function $f:\PathM\to\RR$ and any path $\g_0$ in $M$, there is a unique bounded linear functional
\[
	D_{\g_0}f: \Tan_{\g_0}\PathM\to \RR
\]
(which we also write as $D_{\g_0}f(\eta)=D_\eta f(\g_0)$ and call the derivative of $f$ at $\g_0$ in the direction of $\eta$) satisfying the chain rule:
\[
	D_{\g_0}f(D_0(\g_u))=D_0(f\circ\g_u):I\to \RR
\]
for any smooth deformation $\g_u$, $u\in(-\e,\e)$, of $\g_0$.
\end{prop}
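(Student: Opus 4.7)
The plan is to establish existence by writing down a concrete formula using the universal deformation $\ff$ to produce a canonical one-parameter family realizing a prescribed tangent vector, then prove uniqueness (equivalently, that the chain rule holds for \emph{any} deformation, not just the canonical one) via a two-parameter Taylor argument, and finally check linearity and boundedness.

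First I would observe that the right-hand side $D_0(f\circ\g_u)$ is a well-defined real number: the map $u\mapsto \g_u$ is a smooth map $(-\e,\e)\to\PathM$ and $f$ is smooth, so $u\mapsto f(\g_u)$ is a smooth function on $(-\e,\e)$. To construct the functional, identify $\eta\in \Tan_{\g_0}\PathM$ with its lift $\tilde\eta\in\LL^m$ via the trivialization $\tilde\g:I\times\RR^m\to \Tan M$, and using the universal deformation $\ff:I\times U\to M$ set $\g_u(t):=\ff(t, u\tilde\eta(t))$ for $u$ small. This is a smooth deformation with $D_0\g_u=\eta$, and we define $D_{\g_0}f(\eta):=D_0(f\circ\g_u)$.

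The main step is independence of the choice of deformation. Given any other smooth deformation $\alpha_u$ with $D_0\alpha_u=\eta$, for small $u$ we can factor $\alpha_u(t)=\ff(t,\tilde\alpha(u,t))$ with $\tilde\alpha(u,t)$ smooth, $\tilde\alpha(0,t)=0$, and $\partial_u\tilde\alpha(0,t)=\tilde\eta(t)$. Taylor's theorem gives $\tilde\alpha(u,t)=u\tilde\eta(t)+u^2 q(u,t)$ with $q$ smooth. Consider the smooth two-parameter family
\[
    H(u,v)(t):=\ff(t,\,u\tilde\eta(t)+v\,q(u,t)),
\]
which satisfies $H(u,0)=\g_u$ and $H(u,u^2)=\alpha_u$. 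Then $F(u,v):=f(H(u,v))$ is smooth on a neighborhood of the origin in $\RR^2$, and the ordinary chain rule for $u\mapsto F(u,u^2)$ gives
\[
    D_0(f\circ\alpha_u)=\partial_uF(0,0)+2u\,\partial_vF(0,0)\big|_{u=0}=\partial_uF(0,0)=D_0(f\circ\g_u),
\]
since the factor $2u$ vanishes at $u=0$. Thus $D_{\g_0}f(\eta)$ is well-defined and the chain rule holds for every smooth deformation, which also gives uniqueness.

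Linearity is similar: for $\eta_1,\eta_2$ with lifts $\tilde\eta_1,\tilde\eta_2$, use the smooth two-parameter family $K(u_1,u_2)(t):=\ff(t,u_1\tilde\eta_1(t)+u_2\tilde\eta_2(t))$; pulling $f$ back and differentiating along the diagonal yields additivity, while homogeneity is immediate from the chain rule. For boundedness, the function $g(\tilde\eta):=f(\ff(\cdot,\tilde\eta(\cdot)))$ is defined on a neighborhood of $0$ in the normed space $\LL^m$ of Section 2.2; smoothness of $f$ together with Taylor's theorem along each ray $u\mapsto u\tilde\eta$, combined with uniform control of $g$ on a $C^1$-bounded neighborhood of $0$ (obtained from smoothness applied to the finite-parameter families allowed by the universal deformation), yields an estimate $|D_0g(\tilde\eta)|\le C\|\tilde\eta\|$ for some constant $C$ depending only on $f$ and $\g_0$. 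The main obstacle is this last step: the plot-based definition of smoothness on $\PathM$ does not formally live on a Banach space, so one has to extract the Banach-space continuity of the derivative by reducing everything, via the universal deformation, to smoothness of a function of finitely many real parameters where ordinary calculus applies.
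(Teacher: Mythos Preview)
Your argument is essentially the same as the paper's, with only a minor organizational difference. The paper first declares linearity easy and then reduces well-definedness to the case $\eta=0$: working in the universal deformation (so effectively $M=\RR^m$) it writes any deformation with $D_0\g_u=0$ as $\g_u=\g_0+u^2\psi(u)$ via an explicit double-integral remainder, and then uses exactly your two-parameter trick---introducing $f(\g_0+u^2\psi(v))$ and differentiating in $u$ and $v$ separately at the origin---to see the directional derivative vanishes. You instead compare an arbitrary deformation $\a_u$ directly to the canonical one $\ff(t,u\tilde\eta(t))$ via your family $H(u,v)$; this is the same computation without the preliminary reduction to $\eta=0$, and is arguably cleaner since it avoids the slight circularity of invoking linearity before well-definedness has been established.

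On boundedness, the paper dispatches the issue in a single sentence: ``The fact that it is bounded follows from the continuity of $f$.'' You are right to flag this as the delicate point. Continuity of $f$ in the $C^1$-topology together with smoothness along finite-dimensional plots does not immediately yield a uniform bound on the G\^ateaux derivative across all directions in $\Tan_{\g_0}\PathM$, and your sketch (``uniform control of $g$ on a $C^1$-bounded neighborhood'') does not really close this gap either---continuity alone gives no Lipschitz-type estimate, and smoothness on each finite-dimensional slice gives no uniformity from slice to slice. So on this point your proof is no more and no less complete than the paper's; you have simply been more honest about where the genuine analytic content lies.
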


\begin{proof}
To show that $D_{\g_0}f$ exists, we take, for any $\eta\in \Tan_{\g_0}\PathM$, a one parameter deformation $\g_u$ of $\g_0$ so that $D_0\g_u=\eta$. Then we simply define $D_{\g_0}f(\eta)$ to be $D_0(f\circ\g_u)$. We need to show that this is well defined and linear in $\eta$. Linearity is easy. To show it is well defined it suffices (by linearity) to consider the case $\eta=0$.

Working inside a universal deformation we may assume that $M=\RR^m$. Then any one parameter deformation $\g_u$ of $\g_0$ with $D_0\g_u=\eta=0$ can be written as
\[
	\g_u=\g_0+u^2\psi(u)
\]
where
\[
	\psi(u)=\iint_{0\le x\le y\le1}D_0^2\g_{ux}\,dxdy.
\]
Therefore,
\[
	D_0(f\circ\g_u)=D_0(f(\g_0+u^2\psi(u))|_{u=0}
\]
\[
	=\frac\d{\d u}D_0(f(\g_0+u^2\psi(v))|_{(u,v)=(0,0)}
	+\frac\d{\d v}D_0(f(\g_0+u^2\psi(v))|_{(u,v)=(0,0)}
\]
\[
	=0+0=0.
\]
This shows that the derivative $D{\g_0}f$ of $f$ at $\g_0$ exists. The fact that it is bounded follows from the continuity of $f$.
\end{proof}

The derivative that we just defined gives a linear map
\[
	d:\Omega^0(\PathM)\to\Omega^1(\PathM)
\]
Where $\Omega^0(\PathM)$ is the algebra of smooth (and bounded) real valued function on $\PathM$ and the space $\Omega^p(\PathM )$ of smooth $p$-forms on $\PathM $ is defined as follows. 


\subsubsection{smooth $p$-forms on the path space}

A \emph{$p$-form} on $\PathM$ is a function which assigns to each $\g\in\PathM $ an alternating $p$-form on $\Tan_\g\PathM $. A $p$-form is called \emph{smooth} if its pull-back to any smooth manifold $U$ over $\PathM$ is a (smooth) $p$-form on $U$ and if it is continuous on the subspace of $(\Tan\PathM)^p$ on which it is defined.

We will be taking limits of sequence of forms on the path space. These sequences will be locally uniformly converging in norm and will therefore have well defined continuous limits.

For every $t\in I$ we have the evaluation map
\[
	ev_t:\PathM \to M
\]
sending $\g$ to $\g(t)$. This is a smooth map in the sense that it is $C^1$-continuous and, for any smooth manifold $U$ over $\PathM$, the composition $U\to \PathM\to M$ is a smooth map. In the future we will leave it to the reader to figure out what ``smooth'' means in reference to various structures on $\PathM$.

There is an induced smooth map on tangent bundles 
\[
	\Tan_\g ev_t:\Tan_\g \PathM \to \Tan_{\g(t)}M
\]
which is also given by evaluation at $t$:
\[
	\Tan_\g ev_t(\eta)=\eta(t).
\]
Let $W_t$ be the pull back of $V$ to $\PathM $ along $ev_t$. Thus
\[
	(W_t)_\g=V_{\g(t)}.
\]
$W_t$ is a smooth $\ZZ$-graded bundle over $\PathM$. The grading is given by
\[
	W_t=\bigoplus W_t^k=\bigoplus ev_t^\ast V^k.
\]

If $0\le s\le t\le 1$ let
\[
	\Hom^q(W_s,W_t)
\]
be the smooth bundle over $\PathM$ of degree $q$ graded homomorphisms from $W_s$ to $W_t$. Let 
\[
	\Omega^p(\PathM ,\Hom^q(W_s,W_t))=\Hom^q(W_s,W_t)\otimes_{\Omega^0(\PathM)}\Omega^p(\PathM)
\]
be the smooth bundle over $\PathM $ whose fiber over $\g$ is the vector space of smooth $p$-forms on $\Tan_\g \PathM $ with coefficients in 
\[
	\Hom^q(W_s,W_t)_\g=\Hom^q(V_{\g(s)},V_{\g(t)}).
\]

\begin{defn}\label{def: contraction map}
We define the \emph{contraction map}
\[
	/t:\Omega^{p+1}(M,\End^q(V))\to \Omega^{p}(\PathM ,\End^q(W_t))
\]
to be the linear mapping which sends a $p+1$-form $\a$ on $M$ with coefficients in $\End^q(V)$ to the smooth $p$-form on $\PathM $ whose value at $\g$ is the alternating map
\[
	(\a/t)_\g:(\Tan_\g\PathM )^p\to\End^q(W_t)_\g=\End^q(V_{\g(t)})
\]
given by
\[
	(\a/t)_\g(\eta_1,\eta_2,\cdots,\eta_p)=\a(\eta_1(t),\cdots,\eta_p(t),\g'(t)).
\]
Up to sign, $(\a/t)_\g$ is the (right) interior product of $\g'$ on $(ev_t^\ast\a)_\g$ where $(ev_t^\ast\a)_\g\in\Omega^{p+1}_\g\PathM$ is the pull-back of $\a$ along the evaluation map $ev_t:\PathM\to M$:
\[
	(\a/t)_\g=\iota^R_{\g'}(ev_t^\ast\a)_\g=(-1)^p\iota_{\g'}(ev_t^\ast\a)_\g.
\]
\end{defn}
Note that if $p=0$, the contraction $\a/t$ at $\g$ is simply the evaluation of the 1-form $\a$ at the point $\g(t)$ on the vector $\g'(t)$.


\subsection{Parallel transport}

With this notation, parallel transport of a connection $\nabla$ is the unique family of sections
\[
	\Phi(t,s)\in \Omega^0(\PathM ,\Hom^0(W_s,W_t))
\]
for all $1\ge t\ge s\ge0$ so that, for each path $\g\in\PathM $,
\begin{enumerate}
\item $\Phi(s,s)_\g$ is the identity map in
\[
\Omega^0_\g(\PathM ,\Hom^0(W_s,W_s))=\Hom^0(V_{\g(s)},V_{\g(s)})
\]
\item $\Phi(t,s)_\g$ satisfies the first order linear differential equation
\[
	\frac\d{\d t}\Phi(t,s)_\g=(A/t)_\g\Phi(t,s)_\g=(A_{\g(t)}\g'(t))\Phi(t,s)_\g
\]
where $A$ is given by the decomposition $\nabla=d-A$ in some coordinate chart $U$ for $M$ in a neighborhood of $\g(t)$ and differentiation with respect to $t$ is defined using the chosen product structure on the bundle $V$ over $U$.
\end{enumerate}
\begin{defn}\label{def: superconnection parallel transport}
We define \emph{parallel transport} of the superconnection $D$ to be the unique family of forms $\Psi_p(t,s)$ for all $1\ge t\ge s\ge0$ and $p\ge0$
\[
	\Psi_p(t,s)\in \Omega^p(\PathM ,\Hom^{-p}(W_s,W_t))
\]
satisfying the following at each $\g\in\PathM $.
\begin{enumerate}
\item $\Psi_0(s,s)_\g$ is the identity map in
\[
\Omega^0_\g(\PathM ,\Hom^0(W_s,W_s))=\Hom^0(V_{\g(s)},V_{\g(s)})
\]
and $\Psi_p(s,s)=0$ for $p>0$.
\item For all $p\ge0$ we have
\[
	\boxed{\frac\d{\d t}\Psi_p(t,s)_\g=\sum_{i=0}^p(A_{i+1}/t)_\g\Psi_{p-i}(t,s)_\g}
\]
where $D=d-A_0-A_1-\cdots-A_m$ is the decomposition of $D$ in a coordinate chart $U$ for $M$ in a neighborhood of $\g(t)$ and differentiation with respect to $t$ is given by the chosen product structure on $V|U$.
\end{enumerate}
The differential equation determines $\Psi_p(t,s)$ uniquely as a $p$-form on $\PathM$. We need to show that it is smooth.
\end{defn}

Comparing definitions we see immediately that
\[
	\Psi_0(t,s)=\Phi(t,s).
\]
For $p\ge1$ we can find $\Psi_p(t,s)$ by induction on $p$ using integrating factors. To do this, first note that the differential equation (in the box above) has the form
\[
	\frac\d{\d t}\Psi_p(t,s)=(A_1\g'(t))\Psi_p(t,s)+f(t,s)
\]
where $f(t,s)$ is given in terms of $\Psi_q(t,s)$ for $q<p$:
\[
	f(t,s)=\sum_{q=0}^{p-1} (A_{p-q+1}/t)\Psi_q(t,s).
\]
We multiply by the integrating factor $\Phi(s,t)=\Phi(t,s)^{-1}$ to get
\[
	\frac\d{\d t}\left(\Phi(s,t)\Psi_p(t,s)\right)=\Phi(s,t)\frac\d{\d t}\Psi_p(t,s)-\Phi(s,t)(A_1\g'(t))\Psi_p(t,s)=\Phi(s,t)f(t,s).
\]
So,
\[
	\Phi(s,t)\Psi_p(t,s)=\int_s^t du\,\Phi(s,u)f(u,s).
\]
Multiplying by $\Phi(t,s)$ we get the following.

\begin{prop}\label{superparallel transport formula 1} In degree $0$ we have $\Psi_0(t,s)=\Phi(t,s)$, i.e., parallel transport of the superconnection agrees with that of the underlying connection in degree $0$. For $p\ge1$, $\Psi_p(t,s)$ is given recursively by
\[
	\Psi_p(t,s)=\sum_{q=0}^{p-1}\int_s^t du\,\Phi(t,u)(A_{p-q+1}/u)\Psi_q(u,s)
\]
if $D=d-\sum A_i$ on a coordinate chart containing $\g[s,t]$.
\end{prop}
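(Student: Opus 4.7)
The plan is to verify the degree $0$ claim directly from uniqueness and then establish the recursive formula for $p \geq 1$ by the integrating factor argument already sketched in the paragraphs preceding the proposition.

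For $p = 0$, setting $p = 0$ in the boxed differential equation of Definition \ref{def: superconnection parallel transport} gives
\[
\frac{\partial}{\partial t}\Psi_0(t,s)_\gamma = (A_1/t)_\gamma \Psi_0(t,s)_\gamma,
\]
with initial condition $\Psi_0(s,s)_\gamma = I$. In a coordinate chart where $D = d - \sum A_i$, the underlying connection is $\nabla = d - A_1$ (modified by $A_0$, which does not enter the transport), so this is exactly the differential equation characterizing $\Phi(t,s)_\gamma$. By uniqueness of solutions to the first-order linear ODE, $\Psi_0(t,s) = \Phi(t,s)$.

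For $p \geq 1$, I proceed by induction on $p$. Separating the $i = 0$ term in the boxed equation writes it in the form
\[
\frac{\partial}{\partial t}\Psi_p(t,s) = (A_1/t)\Psi_p(t,s) + f(t,s), \qquad f(t,s) := \sum_{q=0}^{p-1}(A_{p-q+1}/t)\Psi_q(t,s),
\]
where the reindexing $q = p - i$ is used. Since $\Phi(s,t) = \Phi(t,s)^{-1}$ satisfies $\frac{\partial}{\partial t}\Phi(s,t) = -\Phi(s,t)(A_1/t)$ (differentiate $\Phi(s,t)\Phi(t,s) = I$ and use the defining ODE for $\Phi$), multiplying the displayed equation by the integrating factor $\Phi(s,t)$ yields
\[
\frac{\partial}{\partial t}\bigl[\Phi(s,t)\Psi_p(t,s)\bigr] = \Phi(s,t) f(t,s).
\]
Integrating from $s$ to $t$ using $\Psi_p(s,s) = 0$ and then multiplying by $\Phi(t,s)$ gives
\[
\Psi_p(t,s) = \int_s^t du\,\Phi(t,u) f(u,s) = \sum_{q=0}^{p-1}\int_s^t du\,\Phi(t,u)(A_{p-q+1}/u)\Psi_q(u,s),
\]
which is the asserted recursion. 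The inductive hypothesis ensures that each $\Psi_q$ for $q < p$ is already known, so this both constructs $\Psi_p$ and exhibits it uniquely.

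There is no genuine obstacle here; the only subtlety worth noting is smoothness of $\Psi_p$ as a $p$-form on $\PathM$, which was deferred in the Definition. This also follows by induction: $\Psi_0 = \Phi$ is smooth on $\PathM$ in the sense of the preceding subsection, and for each $q < p$ the contraction $(A_{p-q+1}/u)_\gamma$ depends smoothly on $\gamma$ and $u$ by Definition \ref{def: contraction map}, so the integrand in the recursion is a smooth $(p - q) + q = p$-form in $\gamma$ depending continuously on $u \in [s,t]$, and integration against $du$ preserves smoothness. Thus the formula simultaneously solves the defining ODE, produces a smooth $p$-form on $\PathM$ with values in $\Hom^{-p}(W_s,W_t)$, and is the unique such solution.
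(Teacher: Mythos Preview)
Your proof is correct and follows essentially the same approach as the paper: the paper gives exactly this integrating-factor argument in the paragraphs immediately preceding the proposition, separating off the $i=0$ term, multiplying by $\Phi(s,t)=\Phi(t,s)^{-1}$, integrating, and then multiplying back by $\Phi(t,s)$. The paper likewise handles smoothness by induction (using a universal deformation) in the sentence just after the proposition.
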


We note that $du$ commutes with all other factors in the integrand above since they are all even.
Taking a coordinate chart for a universal deformation we see by induction on $p$ that $\Psi_p(t,s)$ is a smooth $p$-form on $\PathM$.

\begin{cor}\label{cor:factorization of superiterated integral}
For all $t\ge u\ge s$ we have:
\[
	\Psi_p(t,s)=\sum_{q=0}^p\Psi_{p-q}(t,u)\Psi_q(u,s).
\]
\end{cor}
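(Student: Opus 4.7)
The plan is to fix $u$ and $s$, view both sides as functions of $t$, and show they satisfy the same first-order linear ODE with the same initial condition at $t=u$. Since Proposition \ref{superparallel transport formula 1} characterizes $\Psi_p(t,s)$ uniquely by such an ODE (with driving terms depending on the $\Psi_q$ for $q<p$), uniqueness of solutions together with induction on $p$ will give the equality.

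I would argue by induction on $p$. The base case $p=0$ is just Lemma \ref{Phi is bounded}, since $\Psi_0=\Phi$. For the inductive step, assume the factorization holds for all $q<p$, and set
\[
\tilde\Psi_p(t):=\sum_{q=0}^{p}\Psi_{p-q}(t,u)\Psi_q(u,s).
\]
At $t=u$ every term with $p-q>0$ vanishes by the initial condition $\Psi_{p-q}(u,u)=0$, while the $q=p$ term contributes $\Psi_0(u,u)\Psi_p(u,s)=\Psi_p(u,s)$, matching the initial value of $\Psi_p(t,s)$ at $t=u$ (and we may then extend from $t\ge u$ to the full statement because the ODE propagates this equality forward in $t$).

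Next I would differentiate in $t$, apply the defining ODE of Definition \ref{def: superconnection parallel transport} to each $\Psi_{p-q}(t,u)$, and re-index. Writing
\[
\frac{\d}{\d t}\tilde\Psi_p(t)=\sum_{q=0}^{p}\sum_{i=0}^{p-q}(A_{i+1}/t)\,\Psi_{p-q-i}(t,u)\,\Psi_q(u,s),
\]
I would swap the order of summation (fix $i$ first, then let $q$ run from $0$ to $p-i$) to collect the inner sum as $\sum_{q=0}^{p-i}\Psi_{p-i-q}(t,u)\Psi_q(u,s)=\tilde\Psi_{p-i}(t)$, giving
\[
\frac{\d}{\d t}\tilde\Psi_p(t)=\sum_{i=0}^{p}(A_{i+1}/t)\,\tilde\Psi_{p-i}(t).
\]
By the inductive hypothesis $\tilde\Psi_{p-i}(t)=\Psi_{p-i}(t,s)$ for $i\ge1$, so this is the same ODE satisfied by $\Psi_p(t,s)$. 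Uniqueness of solutions to first-order linear ODEs with a prescribed initial value then yields $\tilde\Psi_p(t)=\Psi_p(t,s)$.

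I do not expect any serious obstacle: the only point requiring care is the index reshuffling in the double sum and the observation that the non-homogeneous ODE for $\Psi_p$ is linear in the top term (with forcing built out of lower $\Psi_q$), so an induction on $p$ lets the uniqueness argument close. The argument is naturally carried out inside a coordinate chart along $\g$, obtained as in the discussion preceding Definition \ref{def: superconnection parallel transport} by factoring $\g$ through a universal deformation; smoothness of both sides on $\PathM$ is already established in Proposition \ref{superparallel transport formula 1}, so the pointwise equality on each $\g$ suffices.
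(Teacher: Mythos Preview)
Your proof is correct but takes a different route from the paper. The paper also argues by induction on $p$, but instead of invoking ODE uniqueness it works with the explicit integral formula of Proposition \ref{superparallel transport formula 1}: it breaks the integral $\int_s^t$ at the intermediate point $u$, recognizes the $\int_s^u$ piece as $\Phi(t,u)\Psi_p(u,s)=\Psi_0(t,u)\Psi_p(u,s)$, and then applies the inductive hypothesis inside the integrand of the $\int_u^t$ piece to expand $\Psi_q(w,s)$ as $\sum_{j+k=q}\Psi_j(w,u)\Psi_k(u,s)$; after reindexing, that piece assembles into $\sum_{k<p}\Psi_{p-k}(t,u)\Psi_k(u,s)$.

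Your approach bypasses the integral manipulation entirely by going back to the defining differential equation in Definition \ref{def: superconnection parallel transport} and using uniqueness for linear first-order ODEs. This is slightly more conceptual and arguably cleaner: once the index reshuffle is checked, there is nothing to compute. The paper's approach, on the other hand, stays closer to the iterated-integral picture and makes the semigroup property visible as a Fubini/domain-splitting statement, which meshes well with the later product-limit formulation in \S\ref{subsubsec: limit of finite products for superconnections}. Both arguments share the same inductive skeleton and the same base case $\Psi_0=\Phi$.
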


\begin{rem}\label{rem:factorization of superiterated integral}
If we write $\Psi(t,s)=\sum_{p\ge0}\Psi_p(t,s)$ we can write this more cleanly as
\[
	\Psi(t,s)=\Psi(t,u)\Psi(u,s).
\]
By induction we also get:
\[
	\Psi(t,s)=\Psi(t,t_1)\Psi(t_1,t_2)\cdots\Psi(t_n,s)
\]
for any $t>t_1>t_2>\cdots>t_n>s$.
\end{rem}

\begin{proof} The proof is by induction on $p$. We already know that the statement holds for $p=0$ since $\Psi_0=\Phi$.
If we break the interval of integration in the proposition into two intervals at the point $u$ we get an expression for $\Psi_p(t,s)$ as a sum of two integrals. The first integral is
\[
	\sum_{q=0}^{p-1}\int_s^u dw\,\Phi(t,w)(A_{p-q+1}/w)\Psi_q(w,s)
\]
\[
	=\Phi(t,u)\sum_{q=0}^{p-1}\int_s^u dw\,\Phi(u,w)(A_{p-q+1}/w)\Psi_q(w,s)=\Psi_0(t,u)\Psi_p(u,s).
\]
The second integral is
\[
	\sum_{q=0}^{p-1}\int_u^t dw\,\Phi(t,w)(A_{p-q+1}/w)\Psi_q(w,s).
\]
Since $q<p$, we can, by induction, write this as
\[
	\sum_{q<p}\sum_{j+k=q}\int_u^t dw\,\Phi(t,w)(A_{p-q+1}/w)\Psi_j(w,u)\Psi_k(u,s).
\]
Rearranging the double sum as $\sum_{k<p}\sum_{j<p-k}$, this becomes:
\[
	\sum_{k=0}^{p-1}\Psi_{p-k}(t,u)\Psi_k(u,s).
\]
Adding the two integrals gives the result.
\end{proof}

\begin{cor}\label{cor:infinitesmal superparallel transport}
For any $t\ge u\ge s$ and $p\ge1$ we have
\[
	\Psi_p(t,s)=(t-s)(A_{p+1}/u)+o(t-s)
\]
where $A_{p+1}/u$ is the $p$ form on the space of paths $\g:[s,t]\to M$ given by contraction at $u\in [s,t]$.
If $\g$ is an embedding and we take local coordinates we can modify this to get
\[
	\Psi_p(t,s)=\iota^R_{\g(t)-\g(s)}A_{p+1}(\g(s))+o(||\g(t)-\g(s)||).
\]
\end{cor}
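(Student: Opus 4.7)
The plan is induction on $p \ge 1$, feeding the recursive formula of Proposition \ref{superparallel transport formula 1} into the uniform estimate $\Phi(t,s) = I + O(t-s)$ supplied by Lemma \ref{Phi is bounded}. Throughout, I will work inside a universal deformation of $\g$ (as set up earlier in the paper) so that all constants can be taken uniform in the path variable.

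For the base case $p=1$, the recursion gives
\[
\Psi_1(t,s) = \int_s^t dv\,\Phi(t,v)(A_2/v)\Phi(v,s).
\]
Both factors of $\Phi$ equal $I + O(t-s)$ uniformly for $v\in[s,t]$, while $v\mapsto A_2/v$ is smooth, so the integrand equals $A_2/u + O(t-s)$ at any chosen sample point $u\in[s,t]$. Integrating yields $(t-s)(A_2/u)+o(t-s)$. Note that the choice of $u$ is immaterial, since replacing $u$ by another point of $[s,t]$ changes $A_{p+1}/u$ by $O(t-s)$, contributing only $O((t-s)^2)$ after multiplication by $(t-s)$.

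For the inductive step, assume the result for $\Psi_q$ with $q<p$ and write
\[
\Psi_p(t,s)=\sum_{q=0}^{p-1}\int_s^t dv\,\Phi(t,v)(A_{p-q+1}/v)\Psi_q(v,s).
\]
For $q\ge1$ the inductive hypothesis gives $\Psi_q(v,s)=O(v-s)$; multiplying by the bounded factor $\Phi(t,v)(A_{p-q+1}/v)$ and integrating yields $O((t-s)^2)=o(t-s)$. Only the $q=0$ term contributes in first order, and it is treated verbatim as in the base case to produce $(t-s)(A_{p+1}/u)+o(t-s)$. This establishes the first formula.

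For the second formula, pass to local coordinates around the embedded path $\g$. Unfolding the definition of the contraction, $(A_{p+1}/u)_\g(\eta_1,\dots,\eta_p)=A_{p+1}(\eta_1(u),\dots,\eta_p(u),\g'(u))$ at the base point $\g(u)$. Smoothness of $A_{p+1}$, of the vector fields $\eta_i$ along $\g$, and of $\g$ itself permits replacing each ingredient evaluated at $u$ by its value at $s$ with error $O(u-s)=O(t-s)$. In local coordinates the fundamental theorem of calculus also gives $(t-s)\g'(u)=\g(t)-\g(s)+O((t-s)^2)$. After multiplication by $(t-s)$, all such errors collapse to $o(t-s)$, and since $\g$ is a smooth embedding $\|\g(t)-\g(s)\|$ is comparable to $t-s$, so $o(t-s)=o(\|\g(t)-\g(s)\|)$. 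The main bookkeeping hazard is confirming uniformity of these error estimates as $\g$ varies; but this is exactly what the universal deformation construction guarantees, reducing everything to bounded smooth data on a fixed Euclidean chart.
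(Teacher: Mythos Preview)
Your argument is correct and follows essentially the same route as the paper's: induction on $p$ via the recursion of Proposition~\ref{superparallel transport formula 1}, using $\Phi=I+O(t-s)$ to isolate the $q=0$ term, and then unwinding the contraction with $(t-s)\g'(u)=\g(t)-\g(s)+O((t-s)^2)$ for the local-coordinate version. The only cosmetic difference is that the paper indexes the sum by $k=p-q$ rather than $q$, and is terser about the second formula (writing $(t-s)\iota^R_{\g'}=\iota^R_{(t-s)\g'}$ directly), but the substance is the same.
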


\begin{proof} The $p=0$ case was excluded since we already know that
\[
	\Psi_0(t,s)=\Phi(t,s)=I+(t-s)(A_1/u)+o(t-s)=I+O(t-s).
\]
For $p=1$ we have
\[
	\Psi_1(t,s)=\int_s^t du\, \Phi(t,u)(A_2/u)\Phi(u,s)
\]
\[
	=\int_s^t du\, (I+O(t-u))(A_2/u)(I+O(u-s)).
\]
If we integrate something which is bounded by a constant times $t-s$ we get something on the order of $(t-s)^2$. Therefore, this is
\[
	\Psi_1(t,s)=(A_2/u)(t-s)+O((t-s)^2)
\]
for any $u\in[s,t]$. For $p\ge2$ we have
\[
	\Psi_p(t,s)=\sum_{k=1}^p\int_s^t du\, (I+O(t-u))(A_{k+1}/u)\Psi_{p-k}(u,s).
\]
If $p>k$ the term $\Psi_{p-k}(u,s)$ is bounded by a constant times $u-s$. Therefore, it contributes a term of order $(t-s)^2$ after integrating. So, only the $k=p$ term counts. But, $\Psi_0(u,s)=I+O(u-s)$. So, 
\[
	\Psi_p(t,s)=O((t-s)^2)+\int_s^t du\, A_{p+1}/u=(t-s)A_{p+1}/u+O((t-s)^2)
\]
for any $t\ge u\ge s$.

The modification on local coordinates follows from the definition of contraction:
\[
	(t-s)A_{p+1}/s=(t-s)\iota^R_{\g'}ev_s^\ast A_{p+1}(\g)=\iota^R_{(t-s)\g'}ev_s^\ast A_{p+1}(\g)
\]
\[
	=\iota^R_{\g(t)-\g(s)}A_{p+1}(\g(s))+o(t-s).
\]
And $t-s$ has the same order as $||\g(t)-\g(s)||$ if $\g$ is an embedding.
\end{proof}

\begin{cor} For $p\ge0$ we have:
\[
	\frac\d{\d s}\Psi_p(t,s)=\sum_{j=0}^{p}-\Psi_{p-j}(t,s)A_{j+1}/s.
\]
\end{cor}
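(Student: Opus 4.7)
The plan is to derive the $s$-derivative the same way the $t$-derivative was previously obtained, by using the factorization of Remark \ref{rem:factorization of superiterated integral} to split off an infinitesimal factor at the left endpoint of the subdivision point and then applying the infinitesimal estimates of Corollary \ref{cor:infinitesmal superparallel transport}.

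First I would choose a small increment $\Delta s>0$ and apply the factorization
\[
	\Psi_p(t,s)=\sum_{q=0}^p\Psi_{p-q}(t,s+\Delta s)\,\Psi_q(s+\Delta s,s),
\]
which comes directly from Corollary \ref{cor:factorization of superiterated integral} with $u=s+\Delta s$. Next I would substitute the infinitesimal expansions from Corollary \ref{cor:infinitesmal superparallel transport}, namely
\[
	\Psi_0(s+\Delta s,s)=I+\Delta s\,(A_1/s)+o(\Delta s),\qquad \Psi_q(s+\Delta s,s)=\Delta s\,(A_{q+1}/s)+o(\Delta s)\text{ for }q\ge1.
\]
Plugging these into the factorization and separating the $q=0$ term gives
\[
	\Psi_p(t,s)=\Psi_p(t,s+\Delta s)+\Delta s\sum_{q=0}^p\Psi_{p-q}(t,s+\Delta s)(A_{q+1}/s)+o(\Delta s),
\]
where the error terms are uniformly controlled on compact subsets of path space because the family $\Psi_{p-q}(t,s+\Delta s)$ is locally bounded in the norm topology (this uses the absolute convergence of the iterated integrals, exactly as in Lemma \ref{Phi is bounded}).

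Then I would divide by $-\Delta s$ and let $\Delta s\to 0$. The left side tends to $-\frac{\partial}{\partial s}\Psi_p(t,s)$, and the right side tends to $\sum_{q=0}^p\Psi_{p-q}(t,s)(A_{q+1}/s)$ by continuity of $\Psi_{p-q}(t,\cdot)$ in its second argument (which itself follows from the recursive integral formula in Proposition \ref{superparallel transport formula 1}). Reindexing $q$ as $j$ gives the stated identity.

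The only real issue will be the one-sided nature of the argument: the factorization and the infinitesimal estimate require $s+\Delta s\ge s$. To obtain a two-sided derivative one should either repeat the argument with $\Delta s<0$, factoring instead as $\Psi_p(t,s+\Delta s)=\sum_q\Psi_{p-q}(t,s)\Psi_q(s,s+\Delta s)$ (valid since $s\ge s+\Delta s$), or observe that $\Psi_p(t,s)$ is smooth in $s$ by Proposition \ref{superparallel transport formula 1} (which exhibits it as an iterated integral with smooth integrand and smoothly varying lower limit), so that the right-hand derivative computed above automatically coincides with the two-sided derivative. Either route finishes the proof.
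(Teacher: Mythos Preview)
Your proof is correct and follows essentially the same approach as the paper: factor $\Psi_p(t,s)$ at $u=s+\Delta s$ using Corollary~\ref{cor:factorization of superiterated integral}, insert the infinitesimal expansions from Corollary~\ref{cor:infinitesmal superparallel transport}, and read off the difference quotient. Your version is in fact more careful than the paper's, which simply writes down the factorization and the expanded form and says ``the statement follows''; your remarks about boundedness, continuity in $s$, and the one-sided versus two-sided derivative address points the paper leaves implicit.
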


\begin{proof}
Using the above two corollaries we have:
\[
	\Psi_p(t,s)=\sum_{j=0}^p\Psi_{p-j}(t,s+\Delta s)\Psi_j(s+\Delta s,s)
\]
\[
	=\Psi_p(t,s+\Delta s)+\Delta s\sum_{j=0}^p \Psi_{p-j}(t,s+\Delta s)A_{j+1}/s+o(\Delta s).
\]
The statement follows.
\end{proof}


\subsection{Chen's iterated integrals II}\label{subsec: iterated integral for forms}

We will expand the expression for the parallel transport operator given in Proposition \ref{superparallel transport formula 1} to give an iterated integral. We will also construct the parallel transport as a limit of finite products. The latter formula shows that the superconnection parallel transport is independent of the parametrization of the path. As pointed out earlier, these definitions and results are all due to Chen. We explained this more in detail in the last section.


\subsubsection{expansion of the formula}

For $p=1$ we have only one term in the sum:
\[
	\Psi_1(t,s)=\int_s^t du\,\Phi(t,u)(A_2/u)\Phi(u,s).
\]

For $p=2$ we get two summands. Inserting the integral above into the second summand we get:
\[
	\Psi_2(t,s)=\int_s^t du\, \Phi(t,u)(A_3/u)\Phi(u,s)\]
	\[+\iint_{t\ge u_1\ge u_2\ge s}du_1du_2\,
	\Phi(t,u_1)(A_2/u_1)\Phi(u_1,u_2)(A_2/u_2)\Phi(u_2,s).
\]

Continuing in this way, we get the following.

\begin{prop}\label{prop: first iterated integral for Psi} For $p\ge1$, $\Psi_p(t,s)$ is equal to the sum over all $n>0$ and all sequences of integers $k_1,\cdots,k_n\ge2$ so that $\sum k_i=p+n$ of the following iterated integral.
\[
	\iint_{t\ge u_1\ge\cdots\ge  u_n\ge s}du_1\cdots du_n\,
	\Phi(t,u_1)(A_{k_1}/u_1)\Phi(u_1,u_2)(A_{k_2}/u_2)\cdots\Phi(u_n,s)
\]
if $D=d-\sum A_i$ on a coordinate chart containing $\g[s,t]$.
\end{prop}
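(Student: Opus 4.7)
The plan is a straightforward induction on $p$ using the recursive formula from Proposition~\ref{superparallel transport formula 1}, namely
\[
\Psi_p(t,s)=\sum_{q=0}^{p-1}\int_s^t du\,\Phi(t,u)(A_{p-q+1}/u)\Psi_q(u,s).
\]
The base case $p=1$ is immediate: the sum has only the $q=0$ term, giving $\int_s^t du\,\Phi(t,u)(A_2/u)\Phi(u,s)$, which matches the claimed formula with $n=1$ and $k_1=2$ (and indeed $k_1=p+n=2$).

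For the inductive step, I assume the formula holds for all $\Psi_q$ with $q<p$. In the recursion, the $q=0$ summand contributes
\[
\int_s^t du\,\Phi(t,u)(A_{p+1}/u)\Phi(u,s),
\]
which is exactly the $n=1$ term of the proposed formula (sequence $k_1=p+1$). For $q\geq 1$, I substitute the inductive hypothesis for $\Psi_q(u,s)$ as a sum of iterated integrals indexed by sequences $(k_2,\dots,k_n)$ with each $k_j\geq 2$ and $\sum_{j\geq 2}k_j=q+(n-1)$. Prepending $k_1:=p-q+1$ to such a sequence produces a sequence $(k_1,k_2,\dots,k_n)$ with $k_1\geq 2$ and $\sum_{j=1}^n k_j=(p-q+1)+(q+n-1)=p+n$, which is an admissible sequence for $\Psi_p$. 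Composing the outer integral $\int_s^t du\,\Phi(t,u)(A_{k_1}/u)(-)$ with the inner iterated integral over $u\geq u_2\geq\dots\geq u_n\geq s$ (renaming $u=u_1$) gives precisely one of the iterated integrals claimed in the proposition.

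The final step is to verify that this assignment is a bijection between the pairs $(q,(k_2,\dots,k_n))$ arising in the expansion and the sequences $(k_1,\dots,k_n)$ with $n\geq 1$, each $k_i\geq 2$, and $\sum k_i=p+n$. Given such a target sequence with $n\geq 2$, the preimage is $k_1$ (reading off the first entry) and $q=p-k_1+1$; one checks $0\leq q\leq p-1$ because $k_1\geq 2$ forces $q\leq p-1$, and the constraint $\sum_{j\geq 2}k_j\geq 2(n-1)$ combined with $\sum_{j\geq 2}k_j=q+n-1$ forces $q\geq n-1\geq 0$. Sequences with $n=1$ correspond to the $q=0$ summand. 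Summing over all admissible sequences gives exactly the claimed expansion.

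I expect no real obstacle beyond careful bookkeeping of indices; the recursion plus the associativity-like identity of Corollary~\ref{cor:factorization of superiterated integral} has already done the analytic work, and what remains is purely a combinatorial identification of the parametrizing sequences. The one place to be careful is confirming that the product of forms $du_1\cdots du_n$ commutes out of the iterated integrand (as noted after Proposition~\ref{superparallel transport formula 1}, each $du_i$ is even and hence central), so that substituting the inductive iterated integral into the outer $\int du$ simply concatenates the simplices $t\geq u_1$ and $u_1\geq u_2\geq\dots\geq u_n\geq s$ into the full simplex $t\geq u_1\geq\dots\geq u_n\geq s$.
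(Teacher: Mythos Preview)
Your proof is correct and follows exactly the approach the paper indicates: the paper writes out the cases $p=1$ and $p=2$ explicitly and then states the proposition with the phrase ``Continuing in this way, we get the following,'' leaving the reader to supply the induction you have carried out. Your version is simply the careful execution of that induction, including the bijection between the recursion index $q$ and the first entry $k_1$ of the admissible sequence; the only minor slip is that in the inverse direction you write $q\ge n-1\ge 0$, whereas since you are in the case $n\ge 2$ you actually get $q\ge n-1\ge 1$, which is precisely what is needed to match the $q\ge 1$ branch of the recursion.
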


Substituting the iterated integral expression for $\Phi(u_i,u_{i+1})$ we get the following.
\begin{cor}\label{cor: second iterated integral for Psi} For $p\ge0$,
 $\Psi_p(t,s)$ is equal to the sum over all $n\ge0$ and all sequences of integers $k_1,\cdots,k_n\ge1$ so that $\sum k_i=p+n$ of the following iterated integral.
\[
	\iint_{t\ge u_1\ge\cdots\ge  u_n\ge s}du_1\cdots du_n\,
	(A_{k_1}/u_1)(A_{k_2}/u_2)\cdots(A_{k_n}/u_n)
\]
if $D=d-\sum A_i$ on a coordinate chart containing $\g[s,t]$.
\end{cor}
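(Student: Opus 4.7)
The plan is to derive the corollary directly from Proposition \ref{prop: first iterated integral for Psi} by substituting into each factor $\Phi(u_i,u_{i+1})$ its own iterated integral expansion. Recall that $\Phi=\Psi_0$ is the holonomy of the ordinary connection $\nabla=d-A_1$ on the graded bundle, so by Subsection \ref{subsec: standard iterated integrals} applied to the matrix-valued $1$-form $A_1/t$ along $\g$,
\[
\Phi(u_i,u_{i+1})=\sum_{m\ge 0}\iint_{u_i\ge v_1\ge\cdots\ge v_m\ge u_{i+1}}dv_1\cdots dv_m\,(A_1/v_1)\cdots(A_1/v_m).
\]
Inserting this into every $\Phi$ factor appearing in Proposition \ref{prop: first iterated integral for Psi} and unpacking the nested domains of integration produces, for each resulting term, a single iterated integral over a simplex in new variables $t\ge w_1\ge\cdots\ge w_N\ge s$. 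Each $w_j$ carries either one of the original $A_{k_i}/u_i$ with $k_i\ge 2$, or an $A_1/v$ that was introduced when expanding some $\Phi$.

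Next I would check that the bookkeeping collapses to precisely the statement of the corollary. At each insertion of an $A_1$, the length of the index sequence and the total index sum both grow by one, so the invariant $\sum_i k_i - n = p$ is preserved. Conversely, given any final sequence $(k'_1,\dots,k'_N)$ with $k'_j\ge 1$ and $\sum_j k'_j=p+N$, the subsequence of entries with $k'_j\ge 2$ recovers the original $(k_1,\dots,k_n)$ from Proposition \ref{prop: first iterated integral for Psi}, and the positions of the $k'_j=1$ entries encode the $v$-variables inserted when expanding each $\Phi$. Hence the sum produced by the substitution is exactly, and without duplication, the sum indexed by sequences $k'_j\ge 1$ with $\sum k'_j=p+N$. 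The case $p=0$ forces every $k'_j=1$, recovering the standard iterated integral for $\Phi(t,s)=\Psi_0(t,s)$.

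The main obstacle is the combinatorial bookkeeping: making sure that the decomposition $[t,s]=[t,u_1]\cup[u_1,u_2]\cup\cdots\cup[u_n,s]$, together with the nested simplices inserted into each subinterval, assembles to a single simplex in $N$ variables with no overlap or omission, and that the bijection between terms is exact. Signs and orderings are not an issue: the measures $du_i$ and $dv_j$ are scalar and even, so they commute through all the endomorphism-valued forms, and the multiplication order on the $A_{k_i}/w_j$ in each integrand is already the decreasing-time order dictated by the simplex. Convergence and the interchange of summation and integration follow from the uniform $A_i$-bounds along $\g$ exactly as in Lemma \ref{Phi is bounded}, so no additional analytic input beyond what is already in the paper is required.
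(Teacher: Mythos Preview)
Your proposal is correct and follows exactly the paper's approach: the paper simply says ``Substituting the iterated integral expression for $\Phi(u_i,u_{i+1})$ we get the following,'' and you have carried out that substitution in detail, including the bijection between index sequences. Your elaboration of the bookkeeping (that the invariant $\sum k_i-n=p$ is preserved under insertion of $A_1$'s, and that stripping the $k'_j=1$ entries recovers the original sequence) is a sound way to make the one-line argument explicit.
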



\subsubsection{a limit of finite products}\label{subsubsec: limit of finite products for superconnections}
Take all possible partitions of the interval $[s,t]$ into subintervals by choosing $k\ge1$ and
\[
	t=t_0>t_1>t_2>\cdots>t_k=s.
\]
The \emph{mesh} of this partition is the maximum value of $\Delta_it=t_{i-1}-t_{i}$. On each subinterval choose a point $u_i\in [t_{i-1},t_i]$.

\begin{prop} If we have local coordinates,
parallel transport by $D=d-A_0-A_1-A_2-\cdots-A_m$ is given by
\[
	\Psi(t,s)=\sum_{p=0}^\infty\Psi_p(t,s)=\lim_{\mesh(t_\ast)\to0}\prod_{i=1}^k (I+A/u_i\Delta_it)
\]
where $A=A_1+A_2+\cdots+A_m$. The terms in the product are ordered from left to right.
\end{prop}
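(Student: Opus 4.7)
The plan is to imitate the proof of Proposition \ref{prop:parallel transport as limit of finite products}, replacing $\Phi$ by $\Psi$ throughout and using the superconnection analogues of the two key lemmas that went into the ordinary connection case.

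First, by iterating the factorization established in Remark \ref{rem:factorization of superiterated integral}, for any partition $t=t_0>t_1>\cdots>t_k=s$ we have
\[
	\Psi(t,s)=\prod_{i=1}^k \Psi(t_{i-1},t_i),
\]
with the factors ordered left-to-right. Next, summing the asymptotic expansion from Corollary \ref{cor:infinitesmal superparallel transport} over $p\ge 0$ (and using that $\Psi_0(t_{i-1},t_i)=I+O(\Delta_it)$), each factor satisfies
\[
	\Psi(t_{i-1},t_i)=I+(A/u_i)\Delta_it+o(\Delta_it),
\]
where $A=A_1+\cdots+A_m$ and $u_i\in[t_i,t_{i-1}]$ is arbitrary. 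Thus the expression on the right-hand side of the statement, expanded out, differs from $\prod_i \Psi(t_{i-1},t_i)$ only by a sum of products involving at least one $o(\Delta_it)$ factor.

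The remaining task is to show that this difference tends to $0$ as $\mesh(t_\ast)\to 0$. Following the argument in the ordinary connection case, each of these error terms is a product in which one factor is $o(\Delta_it)=o(\mesh(t_\ast))$ and the remaining factors are of the form $I+(A/u_j)\Delta_jt+o(\Delta_jt)$, hence are uniformly bounded; indeed, a partial product $\prod_{i=1}^{j}\Psi(t_{i-1},t_i)$ equals $\Psi(t,t_j)$ by the factorization, and $\Psi$ is bounded on the compact set $\{(t,s):1\ge t\ge s\ge 0\}$ by the iterated integral formula in Corollary \ref{cor: second iterated integral for Psi} (each $(A_{k_i}/u)$ is bounded on the compact path, and the volume of the simplex $t\ge u_1\ge\cdots\ge u_n\ge s$ contributes an $(t-s)^n/n!$ factor making the sum converge). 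Since there are $k$ error terms and $k\cdot o(\mesh(t_\ast))\to 0$ as $\mesh(t_\ast)\to 0$ (because $k\cdot\mesh(t_\ast)$ is bounded by $t-s$), the sum of the error terms vanishes in the limit.

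The main obstacle is the uniform boundedness of the partial products, but this is disposed of immediately by the observation that a partial product is itself a value of $\Psi$, which is bounded on the compact domain by the iterated-integral formula. Once this is in hand, the argument is essentially identical to the proof of Proposition \ref{prop:parallel transport as limit of finite products}; the only conceptual point to verify is that the expansion of the limit matches $\Psi(t,s)=\sum_p\Psi_p(t,s)$ degree by degree, which follows because the Riemann-type sum $\sum_{i_1>\cdots>i_n}(A/u_{i_1})\cdots(A/u_{i_n})\Delta_{i_1}t\cdots\Delta_{i_n}t$ converges to the iterated integral $\iint_{t\ge u_1\ge\cdots\ge u_n\ge s}(A/u_1)\cdots(A/u_n)\,du_1\cdots du_n$ appearing in Corollary \ref{cor: second iterated integral for Psi}.
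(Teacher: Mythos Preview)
Your proof is correct and follows essentially the same route as the paper: factor $\Psi(t,s)=\prod_i\Psi(t_{i-1},t_i)$ using Remark~\ref{rem:factorization of superiterated integral}, approximate each factor via Corollary~\ref{cor:infinitesmal superparallel transport}, and control the error as in Proposition~\ref{prop:parallel transport as limit of finite products}. The paper's proof is a terse three sentences that simply point to these ingredients; you have filled in the details (in particular the boundedness of partial products via identifying them with values of $\Psi$), and your final paragraph on Riemann sums is an alternative viewpoint that is not needed once the exact factorization is in hand, but it does no harm.
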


\begin{proof} By Remark \ref{rem:factorization of superiterated integral} we can write $\Psi(t,s)$ as a product:
\[
	\Psi(t,s)=\prod \Psi(t_{i-1},t_i)
\]
By Corollary \ref{cor:infinitesmal superparallel transport}, each factor has an approximation:
\[
	\Psi(t_{i-1},t_i)=I+A/t_i\Delta_it+o(\mesh)
\]
The proposition follows, just as in Proposition \ref{prop:parallel transport as limit of finite products}.
\end{proof}

We have the following corollary which is more difficult to state than to prove.

\begin{cor}\label{invariance under reparametrization}
The superconnection parallel transport $\Psi_p(t,s)$ is independent of parametrization in the following sense. Suppose that $\g_u(t),u\in U$ is any smooth family of paths in $M$ and $\f_u:I\to I,u\in U$ is any smooth family of smooth homeomorphisms of $I$ with the property that $\f_u(t)=t$ and $\f_u(s)=s$ for some fixed $1\ge t>s\ge 0$. Then, the parallel transport of $D$ along $\g_u\circ\f_u$ from $s$ to $t$ as a $p$-form on $U$ at $u$ is equal to the parallel transport of $D$ along $\g_u$ between the same two points:
\[
	\Psi_p(t,s)_{\g_u\f_u}=\Psi_p(t,s)_{\g_u}\in\Omega_u^p(U,\Hom(W_s,W_t)).
\]
\end{cor}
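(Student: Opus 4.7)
The plan is to use the iterated integral expression for $\Psi_p(t,s)$ from Corollary \ref{cor: second iterated integral for Psi} together with a change-of-variables argument. The core step is to establish the pointwise identity of $(k-1)$-forms on $U$
\[
	(\g_u\f_u)^*(A_k/u_i)\;=\;\f_u'(u_i)\cdot\g_u^*(A_k/\f_u(u_i))
\]
for every $k\ge 1$ and every $u_i\in[s,t]$. Granting this, one substitutes into each summand of the iterated integral for $\Psi_p(t,s)_{\g_u\f_u}$, producing a factor $\prod_{i=1}^n\f_u'(u_i)$. The change of variables $v_i=\f_u(u_i)$ has Jacobian $\prod_i\f_u'(u_i)$, and since $\f_u|_{[s,t]}$ is a smooth increasing self-homeomorphism fixing the endpoints, the region $\{t\ge u_1\ge\cdots\ge u_n\ge s\}$ is mapped bijectively onto $\{t\ge v_1\ge\cdots\ge v_n\ge s\}$. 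The result is exactly the iterated integral for $\g_u^*\Psi_p(t,s)$. Note that because $\f_u$ fixes $s$ and $t$, the fiber $\Hom(V_{\g_u(s)},V_{\g_u(t)})$ into which both sides take values agrees.

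To prove the identity, I would work at a fixed point $u_0\in U$ and a tangent vector $w\in\Tan_{u_0}U$. By the chain rule, the vector field along $\g_{u_0}\f_{u_0}$ coming from the one-parameter deformation $v\mapsto\g_{u_0+vw}\f_{u_0+vw}$ splits as
\[
	\tilde\eta(t)=\eta(\f_{u_0}(t))+\delta(t)\cdot\g_{u_0}'(\f_{u_0}(t)),
\]
where $\eta$ is the vector field along $\g_{u_0}$ arising from $v\mapsto\g_{u_0+vw}$ and $\delta(t)=\frac{\d}{\d v}\f_{u_0+vw}(t)|_{v=0}$ is a scalar. Feeding $k-1$ such variations $\tilde\eta_1,\ldots,\tilde\eta_{k-1}$ into $A_k/u_i$ at the path $\g_{u_0}\f_{u_0}$ places the velocity $(\g_u\f_u)'(u_i)=\f_u'(u_i)\g_u'(\f_u(u_i))$ in the last slot. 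Expanding by multilinearity, every term in which some slot $j$ picks up the correction $\delta_j(u_i)\g_{u_0}'(\f_{u_0}(u_i))$ contains two arguments proportional to $\g_{u_0}'(\f_{u_0}(u_i))$ and hence vanishes because $A_k$ is alternating. Only the term with $\eta_j(\f_{u_0}(u_i))$ in every slot survives, and factoring out the scalar $\f_u'(u_i)$ from the last slot yields the claimed identity.

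The main point requiring care is the alternating-form cancellation in the second paragraph: one must recognize that the chain-rule \emph{correction} in each variation $\tilde\eta_j$ is always a scalar multiple of $\g'$, which up to the scalar $\f'$ is precisely the distinguished velocity argument of the contraction $A_k/u_i$, so antisymmetry of $A_k$ destroys every cross term. Once this is established, the change of variables in the iterated integral is routine, and the corollary follows summand by summand from Corollary \ref{cor: second iterated integral for Psi}.
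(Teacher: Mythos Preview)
Your proof is correct, but it takes a different route from the paper's. The paper argues via the limit-of-finite-products formula proved just above: writing
\[
	\Psi(t,s)=\lim\prod \left(I+\iota^R_{x_{i-1}-x_i}A(x_i)\right),\qquad x_i=\g(t_i),
\]
the point is that this expression depends only on the sequence of points $x_i\in M$ (and the transverse $U$-coordinates, which the reparametrization does not touch), not on the parameter values $t_i$ themselves. Replacing $t_i$ by $\f_u(t_i)$ only changes the mesh by a bounded factor, so the limit is unchanged. This is a two-line argument once the product formula is in hand.

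Your approach instead unpacks the iterated-integral formula from Corollary~\ref{cor: second iterated integral for Psi}. The key step---that the chain-rule correction $\delta_j(u_i)\g'(\f(u_i))$ in each variation is parallel to the velocity slot and hence is killed by the antisymmetry of $A_k$---is exactly right, and it is the analytic content behind the paper's remark that ``the reparametrization $\f_u$ preserves the transverse coordinates.'' After that, the change of variables $v_i=\f_{u_0}(u_i)$ (done pointwise in $u_0$, with $\f_{u_0}$ necessarily increasing on $[s,t]$ since it fixes both endpoints) absorbs the Jacobian $\prod\f_{u_0}'(u_i)$ and reproduces the iterated integral for $\g_u$. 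Your argument is longer but makes the cancellation mechanism explicit; the paper's is shorter and more geometric, trading the explicit Jacobian computation for the observation that the product formula is already written in terms of the image points.
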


\begin{proof}
The product formula can be written in the following parametrization independent way:
\[
	\Psi(t,s)=\lim\prod (I+\iota^R_{x_{i-1}-x_i}A(x_i))
\]
where $x_i=\g(t_i)$. If we replace $x_i$ with $y_i=\g(\f(t_i))$ the distance between points will grow by a bounded factor. So this limit will still converge to the same thing.
We use the fact that the reparametrization $\f_u$ preserves the transverse coordinates (the coordinates of $U$).
\end{proof}

%
%

\section{Integration of flat superconnections}\label{sec 3: integrating flat superconnections}

In this section we will make a list of conditions (in boxed equations) that we would like the matrix forms $A_p$ to satisfy. We will see that these conditions hold if and only if $D=\nabla-A_0-A_2-A_3-\cdots$ is a flat superconnection in the sense of Bismut and Lott.

Let's start with $p=0$. So far we haven't used $A_0\in\Omega^0(M,\End^1(V))$. We want $(V,A_0)$ to be a cochain complex bundle over $M$. For this we just need:
\[
	\boxed{A_0^2=0.}
\]
Next, we want the parallel transport $\Phi$ to be a cochain map
\[
	\Phi(t,s):V_{\g(s)}\to V_{\g(t)}.
\]
This will hold if $A_1$ is suitably chosen.
If the connection $\nabla$ is not flat, this will depend on the path $\g$. But, for suitable $A_2$, integration of $\Psi_1$ will give a homotopy between chain maps. Similarly, the integral of $\Psi_2$ will give a homotopy between these homotopies if $A_3$ is suitably chosen. And so on. The sequence of conditions that we get on the $A_p$ make $D=\nabla-A_0-A_2-A_3-\cdots$ into a flat superconnection.


\subsection{Parallel transport as cochain map}

For any path $\g$ in $M$ we want the parallel transport $\Phi$ of $\nabla=d-A_1$ to be a cochain map as indicated above. In other words, we want:
\[
	\boxed{A_0(\g(t))\Phi(t,s)=\Phi(t,s)A_0(\g(s))}
\]
for all $1\ge t\ge s\ge 0$.
This is equivalent to the condition that $\Phi(t,u)A_0(\g(u))\Phi(u,s)$ is independent of $u\in[s,t]$. This, in turn, is equivalent to the condition:
\[
	\frac\d{\d u}\Phi(t,u)A_0(\g(u))\Phi(u,s)=0.
\]
Taking local coordinates for $M$ at the point $\g(u)$ we can write this as:
\[
	\Phi(t,u)\left[
	-A_1/u\,A_0-dA_0/u+A_0\,A_1/u
	\right]\Phi(u,s)=0.
\]
Here we used the fact that, since $A_0$ is odd,
\[
	\frac\d{\d u}A_0(\g(u))=-dA_0/u.
\]
Since $\Phi(t,u),\Phi(u,s)$ are invertible, this is equivalent to the condition:
\[
	-A_1/u\,A_0-dA_0/u+A_0\,A_1/u=0.
\]
Using again the fact that $A_0$ is odd we see that $A_1/u\,A_0=-(A_1A_0)/u$. Therefore, if we want this to hold for all paths $\g$ we must have:
\[
	\boxed{dA_0=A_0A_1+A_1A_0.}
\]
\begin{thm} Suppose that $A_0^2=0$ at all points in $M$. Then, parallel transport $\Phi(t,s)$ is a chain map for all paths $\g$ in $M$ if and only ${dA_0=A_0A_1+A_1A_0}$ at all points of $M$.
\end{thm}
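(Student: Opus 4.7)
The plan is to translate the chain-map condition on $\Phi(t,s)$ into a pointwise condition on $A_0$ and $A_1$ by differentiating in an intermediate parameter. The setup is:
$$\Phi(t,s)\text{ is a chain map}\iff A_0(\g(t))\Phi(t,s)=\Phi(t,s)A_0(\g(s))\text{ for all }t\ge s.$$
Since $\Phi(s,s)=I$, this is in turn equivalent to saying the function $u\mapsto \Phi(t,u)A_0(\g(u))\Phi(u,s)$ is constant on $[s,t]$, or equivalently has zero derivative in $u$.

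First I would compute this derivative using Proposition~\ref{prop:dPhi} for $\frac{\d}{\d u}\Phi(t,u)$ and $\frac{\d}{\d u}\Phi(u,s)$, plus the chain rule applied to $A_0\circ\g$; here I need to be careful with the sign coming from the fact that $A_0$ is odd, which gives $\frac{\d}{\d u}A_0(\g(u))=-dA_0/u$ as remarked in the excerpt. Collecting terms, and using invertibility of $\Phi(t,u)$ and $\Phi(u,s)$, the vanishing of the derivative reduces to the pointwise condition along $\g$,
$$A_0\,(A_1/u)-(A_1/u)\,A_0-dA_0/u=0.$$
Because $A_0$ is odd, $A_0\,(A_1/u)=(A_0A_1)/u$ and $(A_1/u)\,A_0=-(A_1A_0)/u$, so this becomes $\bigl(A_0A_1+A_1A_0-dA_0\bigr)/u=0$.

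Next I would argue the \textbf{pointwise-to-global} direction. If $dA_0=A_0A_1+A_1A_0$ holds on $M$, then the contracted identity above holds along every $\g$, so the derivative vanishes, the function of $u$ is constant, and comparing $u=s$ with $u=t$ gives the chain-map identity. For the converse, assume $\Phi(t,s)$ is a chain map along \emph{every} smooth $\g$. Then the contracted form $(dA_0-A_0A_1-A_1A_0)/u$ vanishes at every path and every interior $u$. Given any $x\in M$ and any tangent vector $v\in\Tan_xM$, I can choose $\g$ with $\g(u)=x$ and $\g'(u)=v$ for some $u\in(0,1)$; then the value of the contraction on $\g$ at $u$ is exactly the evaluation of the $1$-form $dA_0-A_0A_1-A_1A_0$ at $x$ on $v$. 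Varying $v$ and $x$ forces $dA_0=A_0A_1+A_1A_0$ at every point of $M$.

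The only place that really needs care is keeping track of the odd-form signs when moving $A_0$ past the contraction operator $/u$ and past $\frac{\d}{\d u}$; the rest is a routine application of the differential equation boxed in Section~\ref{section 1: standard definitions} together with the observation that $\Phi(t,u)$ is invertible and that contractions at a point $\g(u)$ with velocity $\g'(u)$ recover every covector at every point of $M$.
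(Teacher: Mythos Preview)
Your proposal is correct and follows essentially the same route as the paper: differentiate $\Phi(t,u)A_0(\g(u))\Phi(u,s)$ in $u$, use the parallel-transport ODE and the oddness of $A_0$ to reduce to the contracted identity, then invoke invertibility of $\Phi$ and vary $\g$ to pass from the contracted form to the pointwise $1$-form equation. The only difference is that you spell out the converse step (choosing $\g$ with prescribed $\g(u)$ and $\g'(u)$) a bit more explicitly than the paper does.
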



\subsection{Cochain homotopy} Let $x_0,x_1\in M$ and suppose that $\g_0,\g_1$ are paths from $x_0$ to $x_1$. Then we get two cochain maps
\[
	V_{x_0}\to V_{x_1}.
\]
If $\nabla$ is flat and $\g_0,\g_1$ are homotopic fixing their endpoints, these two cochain maps will agree. If $\nabla$ is not flat, we would like to have a cochain homotopy between these two cochain maps. So, suppose we have a homotopy of paths
\[
	h:I_1\times I_2\to M
\]
which we write as a 1-parameter family of paths $\g_u(t), t\in I_1,u\in I_2$ so that $\g_u(0)=x_0$ and $\g_u(1)=x_1$ for all $u\in I_2$. (Here $I_1=I_2=[0,1]$. The indices are just to keep track of the coordinates.)

For each $u\in I_2$ we have a cochain map:
\[
	\Phi_u(1,0):W_0=V_{x_0}\to W_1=V_{x_1}
\]
given by parallel transport of $\nabla$ along the path $\g_u$.

If we pull back $V,A_0,A_1,A_2$ to $I^2$ along $h$ we also get the pull back of the superconnection transport:
\[
	\Psi_1(1,0)\in \Omega^1(I_2,\Hom^{-1}(W_0,W_1)).
\]
We want the integral of this 1-form on $I_2$ to be a homotopy $\Phi_0\simeq \Phi_1$. In order for this to hold for all homotopies of all paths we must have:
\[
	A_0\int_{u_1}^{u_2}\Psi_1+\left(\int_{u_1}^{u_2}\Psi_1\right)A_0=\Phi_{u_2}-\Phi_{u_1}.
\]
This is equivalent to its infinitesimal form which has a different sign:
\[
	\boxed{A_0\Psi_1-\Psi_1A_0=d\Phi} 
\]
The negative sign in the differential form becomes positive when we collect terms:
\[
	(\d \otimes 1)\left(\sum h_i\otimes dx_i\right)-\left(\sum h_i\otimes dx_i\right)(\d\otimes 1)=\sum (\d h_i+h_i\d)dx_i.
\]
If we integrate $\Psi_1$ first and then multiply by $A_0$ there is no $dx_i$ term so the sign does not change. On the RHS we view $\Phi$ as a 0-form on $I_2$:
\[
	\Phi\in\Omega^0(I_2,\Hom^0(W_0,W_1)).
\]
Recall that
\[
	\Psi_1(1,0)=\int_0^1dt\, \Phi(1,t)(A_2/t)\Phi(t,0).
\]
Since $\Phi$ is a cochain map, the LHS of the equation in the box is
\[
	A_0\Psi_1(1,0)-\Psi_1(1,0)A_0=\int_0^1dt\,\Phi_u(1,t)[\underbrace{A_0A_2/t-A_2/tA_0}_{(A_0A_2+A_2A_0)/t}]\Phi_u(t,0).
\]
The RHS of the boxed equation is
\[
	d\Phi_u(1,0)=\int_0^1dt\, \Phi_u(1,t)d(A_1/t)\Phi_u(t,0).
\]
\begin{lem}\label{first lemma} 
In $I^n$ we have:
\[
	d(A_k/t)=\frac\d{\d t}A_k^\perp+(dA_k)/t
\]
where $A_k^\perp=A_k-(A_k/t)dt$ is the part of $A_k$ which involves only $dx_i$ for $i\ge2$.
\end{lem}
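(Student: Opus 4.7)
The plan is to compute both sides in local coordinates $(x_1, x_2, \ldots, x_n)$ on $I^n$ with $t = x_1$, decomposing every form into its $dt$-free and $dt$-bearing parts. First I would make the defining decomposition $A_k = A_k^\perp + (A_k/t) \wedge dt$ explicit: unwinding $A_k/t = \iota^R_{\partial/\partial t} A_k = (-1)^{k-1} \iota_{\partial/\partial t} A_k$, this reduces to the tautology that $A_k^\perp$ collects the monomials of $A_k$ without $dt$, while $(A_k/t) \wedge dt$ collects those containing $dt$. The same splitting will be applied to the $(k+1)$-form $dA_k$ on the other side of the identity.

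Next, splitting the exterior derivative as $d = d' + dt \wedge \partial/\partial t$, where $d' = \sum_{i \geq 2} dx_i \wedge \partial/\partial x_i$ is the transverse exterior derivative, I would apply $d$ to both sides of the decomposition of $A_k$. Using Leibniz together with $d(dt) = 0$, this gives $dA_k = dA_k^\perp + d(A_k/t) \wedge dt$. Substituting the split for $d$ on each summand, and noting that $dt \wedge \partial(A_k/t)/\partial t \wedge dt = 0$ because $dt \wedge dt = 0$, the right side collapses to
\[
dA_k = d'A_k^\perp + dt \wedge \frac{\partial A_k^\perp}{\partial t} + d'(A_k/t) \wedge dt.
\]
Comparing this to the analogous decomposition $dA_k = (dA_k)^\perp + ((dA_k)/t) \wedge dt$ of $dA_k$ and rearranging --- noting that $d(A_k/t) = d'(A_k/t)$ since $A_k/t$ has no $dt$ component --- produces the claimed identity $d(A_k/t) = \partial A_k^\perp/\partial t + (dA_k)/t$.

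The main obstacle is sign bookkeeping. The $(-1)^{k-1}$ factor in the definition of $/t$, the signs introduced by commuting $dt$ past forms of varying degree, and the $(-1)^k$ arising when $/t$ is applied to the $(k+1)$-form $dA_k$ must all line up so that the three right-hand terms above collapse to the two-term expression in the statement. Because $dt \wedge dt = 0$ kills every contribution involving a repeated $dt$, the calculation is essentially a finite bookkeeping exercise once the right-interior-product conventions are fixed.
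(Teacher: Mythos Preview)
The paper states this lemma without proof, so there is nothing to compare against; your coordinate-splitting argument is the natural one and is correct in outline.

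One sign deserves to be named explicitly, since it is not on your list and without it the bookkeeping does not close. After you reach
\[
dA_k = d'A_k^\perp \;+\; dt\wedge \tfrac{\partial}{\partial t}A_k^\perp \;+\; d'(A_k/t)\wedge dt,
\]
you must convert $dt\wedge \partial_t A_k^\perp$ to the form $(\,\cdot\,)\wedge dt$ to read off $(dA_k)/t$. For ordinary scalar $k$-forms this swap costs $(-1)^k$, and the identity would then read $d'(A_k/t)=(-1)^{k+1}\partial_t A_k^\perp+(dA_k)/t$, wrong for even $k$. What saves the sign-free statement is the graded convention spelled out in the paper's ``Hidden signs'' subsection: $A_k^\perp\in\End^{1-k}(V)\otimes\Omega^k$ has \emph{total} degree $1$, so $dt$ anticommutes with $\partial_t A_k^\perp$ uniformly for every $k$, and the coefficient is always $-1$. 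The same Koszul sign is already in play earlier in the section (e.g.\ in the line $\tfrac{\partial}{\partial u}A_0(\gamma(u))=-dA_0/u$), so it is the right convention here. Your three listed signs --- the $(-1)^{k-1}$ in the definition of $/t$, the $(-1)^k$ for $(dA_k)/t$, and commuting $dt$ past forms --- are all relevant, but this endomorphism-grading sign is the one that makes the final formula come out clean.

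A small wording fix: the claim ``$d(A_k/t)=d'(A_k/t)$ since $A_k/t$ has no $dt$ component'' is not the right justification, because the coefficients of $A_k/t$ still depend on $t$ and would produce a $dt$-term under the full $d$ on $I^n$. The correct reading is that every term in the identity is a transverse form (a form on the path-space, with $t$ as a parameter), and the $d$ on the left is the transverse differential by construction; equivalently, only the $dt$-free part survives inside the integrals $\int_0^1 dt\,(\cdots)$ where the lemma is applied.
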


Using this lemma for $k=1$, the condition that we need becomes:
\[
	\int_0^1dt\,\Phi_u(1,t)\left
	[(A_0A_2+A_2A_0)/t-(dA_1)/t-\frac\d{\d t}A_1^\perp
	\right]\Phi_u(t,0)=0.
\]
Consider the one parameter family of functions
\[
	f_u(t)=\Phi_u(1,t)A_1^\perp(t,u)\Phi_u(t,0),\quad u\in I.
\]
Since the paths $\g_u$ are fixed at the two ends, we have
\[
	A_1^\perp(0,u)=0=A_1^\perp(1,u)
\]
for all $u$. Therefore, $f_u(0)=0=f_u(1)$ for all $u$. This means that the integral of the derivative of $f_u$ is zero:
\[
	\int_0^1dt\,\frac\d{\d t}\Phi_u(1,t)A_1^\perp(t,u)\Phi_u(t,0)=0
\]
\[
	\int_0^1dt\, \Phi_u(1,t)\left[
	-(A_1/t)A_1^\perp+A_1^\perp(A_1/t)+\frac\d{\d t}A_1^\perp
	\right]\Phi_u(t,0)=0.
\]
There is no sign in front of $A_1^\perp(A_1/t)$ since $\d/\d t$ is an even operator.
Combine this with the following lemma.
\begin{lem}\label{second lemma}  
$(A_pA_q)/t=A_p^\perp(A_q/t)-(A_p/t)A_q^\perp$.
\end{lem}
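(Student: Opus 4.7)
The plan is to work in the local coordinate chart $I^n$ already set up for Lemma \ref{first lemma}, with $t = x_1$ as the first coordinate and $dx_2,\ldots,dx_n$ as the remaining basis $1$-forms on $M$. I will decompose each factor according to the presence of $dt$, writing
\[
  A_p = A_p^\perp + (A_p/t)\wedge dt, \qquad A_q = A_q^\perp + (A_q/t)\wedge dt,
\]
where $A_p^\perp, A_q^\perp, A_p/t, A_q/t$ all lie in the subalgebra of endomorphism-valued forms with no $dt$ factor. Substituting into $A_pA_q$ and expanding gives four terms. The ``both $dt$'' term $(A_p/t)dt\cdot(A_q/t)dt$ contains a $dt\wedge dt = 0$ and therefore vanishes, while the ``no $dt$'' term $A_p^\perp A_q^\perp$ contributes only to $(A_pA_q)^\perp$, not to $(A_pA_q)/t$.

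It remains to show that the two mixed terms contribute $A_p^\perp(A_q/t)$ and $-(A_p/t)A_q^\perp$ respectively to the coefficient of $dt$ in $A_pA_q$. The first contribution is immediate since $A_p^\perp\cdot(A_q/t)dt$ already has $dt$ at the extreme right with no internal $dt$; reading off the coefficient gives $A_p^\perp(A_q/t)$. The second contribution requires commuting $dt$ past $A_q^\perp$ in the term $(A_p/t)dt\cdot A_q^\perp$, and I expect this sign bookkeeping to be the main obstacle.

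To carry out the commutation, I will use the Koszul sign rule for multiplication in $\Omega^{\ast}(M,\End V)$ based on total degree (form degree plus endomorphism degree) together with the pure form-degree rule $dt\wedge\omega = (-1)^{|\omega|}\omega\wedge dt$. Explicitly, re-associating $(A_p/t)\cdot dt\cdot A_q^\perp$ using the Koszul convention gives a factor $(-1)^{p(1-q)}$ from the first product, then commuting $dt$ past the $q$-form part of $A_q^\perp$ gives $(-1)^q$, and finally rewriting the endomorphism-valued form $(A_p/t)A_q^\perp$ in the Koszul convention gives $(-1)^{(p-1)(1-q)}$. These exponents combine to $(1-q)+q = 1$, yielding the desired global sign $-1$ and hence the contribution $-(A_p/t)A_q^\perp$.

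As a sanity check, and to see the identity in coordinate-free form, I would also verify the computation by applying the graded Leibniz rule $\iota^R_X(\alpha\beta) = (-1)^{|\beta|_{\mathrm{tot}}}(\iota^R_X\alpha)\beta + \alpha(\iota^R_X\beta)$ to $X = \partial/\partial t$, using that $\iota^R_{\partial/\partial t}$ realizes the $/t$ operation along the straight-line path in the $t$-direction. Since $|A_q|_{\mathrm{tot}} = 1$, this yields $(A_pA_q)/t = -(A_p/t)A_q + A_p(A_q/t)$. Expanding $A_p$ and $A_q$ in their $\perp$ and $dt$ components, the two leftover pieces involve $(A_p/t)(A_q/t)dt$ and $(A_p/t)dt(A_q/t)$, which cancel once one notes that $dt$ commutes with $A_q/t$ (the Koszul sign $(-1)^{1\cdot(1-q)}$ exactly cancels the form-degree sign $(-1)^{q-1}$). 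What remains is precisely $A_p^\perp(A_q/t) - (A_p/t)A_q^\perp$.
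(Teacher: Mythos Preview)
Your proof is correct. The paper states this lemma without proof, treating it as an elementary identity, so there is no ``paper's own proof'' to compare against. Your decomposition $A_p = A_p^\perp + (A_p/t)\,dt$ is exactly the intended reading of the definition $A_k^\perp = A_k - (A_k/t)\,dt$ given just above in Lemma~\ref{first lemma}, and your sign bookkeeping via the Koszul convention from Section~3.4 is accurate: the three signs $(-1)^{p(1-q)}$, $(-1)^q$, and $(-1)^{(p-1)(1-q)}$ do combine to give $-1$. Your second derivation via the right-Leibniz rule $\iota^R_X(\alpha\beta) = (-1)^{|\beta|_{\mathrm{tot}}}(\iota^R_X\alpha)\beta + \alpha(\iota^R_X\beta)$ is also valid and is arguably the cleaner way to see the identity, since the total degree $|A_q|_{\mathrm{tot}} = 1$ immediately produces the minus sign.
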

We get:
\[
	\int_0^1dt\, \Phi_u(1,t)\left[
	(A_1A_1)/t+\frac\d{\d t}A_1^\perp
	\right]\Phi_u(t,0)=0.
\]
Combine this with the integral just after Lemma \ref{first lemma} to get that
\[
	\int_0^1dt\,\Phi_u(1,t)\left
	[(A_0A_2+A_2A_0)/t-(dA_1)/t+(A_1A_1)/t
	\right]\Phi_u(t,0)=0
\]
is the condition equivalent to the boxed equation above.

\begin{thm}\label{previous thm} Suppose that $A_0^2=0$ and $dA_0=A_0A_1+A_1A_0$ at all points of $M$. Then, the following are equivalent.
\begin{enumerate}
\item The integral of $\Psi_1(1,0)$ gives a cochain homotopy $\Phi_0(1,0)\simeq \Phi_1(1,0)$ for all smooth homotopies of smooth paths from $x_0$ to $x_1$.
\item For any $\g\in\Path(M,x_0,x_1)$ we have
\[
	A_0\Psi_1-\Psi_1A_0=d\Phi
\]
as elements of $\Omega^1(\Path(M,x_0,x_1),\Hom^0(V_{x_0},V_{x_1}))$.
\item 
\[
	\boxed{
	dA_1=A_0A_2+A_1A_1+A_2A_0
	}
\]
at all points in $M$ in the connected component of $x_0,x_1$. 
\end{enumerate}
\end{thm}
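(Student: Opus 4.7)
The plan is to use condition (2) as the pivot, establishing $(1)\Leftrightarrow(2)$ by a straightforward integration/differentiation argument and $(2)\Leftrightarrow(3)$ by the explicit computation sketched in the discussion preceding the theorem. Throughout I will use the given hypotheses $A_0^2=0$ and $dA_0=A_0A_1+A_1A_0$, the latter of which, by the previous theorem, guarantees that $\Phi(t,s)$ is a cochain map for every path.

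For $(2)\Leftrightarrow(1)$, I would integrate (2) over an interval $[u_1,u_2]$ in the homotopy parameter. Because the $du$ produced by evaluating the 1-form $\Psi_1$ on $\partial/\partial u$ graded-commutes past the odd operator $A_0$ with a sign that is then killed by integration, this yields
\[
A_0\!\int_{u_1}^{u_2}\!\Psi_1+\left(\int_{u_1}^{u_2}\!\Psi_1\right)\!A_0=\Phi_{u_2}(1,0)-\Phi_{u_1}(1,0),
\]
which is exactly the cochain homotopy statement of (1). Conversely, differentiating this identity in $u_2$ recovers the infinitesimal form (2), so the two are equivalent once one checks the sign calculation displayed in the text.

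For $(2)\Leftrightarrow(3)$, the idea is to compute both sides of (2) by inserting the explicit formula $\Psi_1(1,0)=\int_0^1 dt\,\Phi(1,t)(A_2/t)\Phi(t,0)$ on the left-hand side and $d\Phi(1,0)=\int_0^1 dt\,\Phi(1,t)\,d(A_1/t)\,\Phi(t,0)$ on the right. Using that $\Phi$ is a cochain map to move $A_0$ through $\Phi$, and applying Lemma \ref{first lemma} to rewrite $d(A_1/t)$, condition (2) reduces to the vanishing of
\[
\int_0^1 dt\,\Phi_u(1,t)\bigl[(A_0A_2+A_2A_0)/t-(dA_1)/t-\tfrac{\partial}{\partial t}A_1^\perp\bigr]\Phi_u(t,0).
\]
The key trick, exactly as set up in the discussion, is to eliminate the $\partial A_1^\perp/\partial t$ term: since the endpoint conditions $A_1^\perp(0,u)=0=A_1^\perp(1,u)$ imply that $\int_0^1\frac{d}{dt}(\Phi_u(1,t)\,A_1^\perp\,\Phi_u(t,0))\,dt=0$, expanding the derivative and applying Lemma \ref{second lemma} converts the $A_1^\perp$ commutator terms into $(A_1A_1)/t$. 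What remains is
\[
\int_0^1 dt\,\Phi_u(1,t)\bigl[(A_0A_2+A_1A_1+A_2A_0-dA_1)/t\bigr]\Phi_u(t,0)=0,
\]
which is obviously implied by (3). For the converse direction I would argue by localization: since (2) is required to hold for all smooth families of paths with fixed endpoints, and since $\Phi$ is invertible, the bracketed 2-form contracted against arbitrary path velocities must vanish; by working inside a universal deformation one realizes arbitrary pairs of tangent vectors at an arbitrary point, forcing the 2-form $dA_1-A_0A_2-A_1A_1-A_2A_0$ to vanish pointwise on the connected component of $x_0,x_1$.

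The main obstacle is not any single hard step but rather the careful sign bookkeeping throughout, and in particular verifying the two auxiliary lemmas on contractions so that the rearrangement of the integrand is valid. The localization step in the converse also deserves care: one must ensure that the space of admissible test paths is large enough for the integral vanishing to force pointwise vanishing, which is handled by shrinking paths into a chart and choosing velocity and transverse deformation independently via a universal deformation.
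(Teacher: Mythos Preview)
Your proposal is correct and follows essentially the same approach as the paper: $(1)\Leftrightarrow(2)$ by integration/differentiation, $(3)\Rightarrow(2)$ by the explicit integral computation using Lemmas \ref{first lemma} and \ref{second lemma}, and $(2)\Rightarrow(3)$ by localization. The paper's version of the localization is phrased slightly more concretely---pick a point $z$ where (3) fails, pick a tangent $2$-plane there witnessing the failure, run a path from $x_0$ through $z$ to $x_1$, and deform it by sliding along that $2$-plane near $z$---but this is exactly what your ``universal deformation realizing arbitrary pairs of tangent vectors'' amounts to.
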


\begin{proof} (1) and (2) are equivalent since (2) is the infinitesimal version of (1). The above calculations show that (3) implies (2). To see that Condition (3) is necessary, suppose there is a point $z\in M$ in the component of $x_0$ so that this conditions fails. Since this is an equation of 2-forms, there must be some tangent 2-plane at $z$ on which the equation fails. Choose a path that goes from $x_0$ to $z$ and then to $x_1$. Perform a homotopy which slides along this 2-plane in a small neighborhood of $z$ and is otherwise stationary. Then the integral above will not be zero and we get a contradiction, i.e., we do not get a cochain homotopy as required. So, the boxed equation (3) and the previous boxed equation (2) are equivalent when universally quantified.
\end{proof}


\subsection{Higher homotopies}

Suppose now that we have a smooth map
\[
	h:(I^{k+1},0\times I^k,1\times I^k)\to (M,x_0,x_1)
\]
which we view as a $k$-parameter family of paths $\g_u(t), t\in I, u\in I^k$, from $x_0$ to $x_1$. Then, for $p\le k$, the superconnection parallel transport gives a matrix $p$-form
\[
	\Psi_p=\Psi_p(1,0)\in \Omega^p(I^k,\Hom^{-p}(W_0,W_1)).
\]
The integral of this $p$-form on any $p$-dimensional face of $I^k$ gives a degree $-p$ homomorphism
\[
	W_0=V_{x_0}\to W_1=V_{x_1}.
\]
What we want is for the integral of $\Psi_k$ over $I^k$ to be the null-homotopy of the alternating sum of the integral of $\Psi_{k-1}$ over the codimension one faces of $I^k$, i.e.,
\[
	A_0\int_{I^k}\Psi_k(1,0)-(-1)^k\left(\int_{I^k}\Psi_k(1,0)\right)A_0=\int_{\d I^k}\Psi_{k-1}(1,0).
\]
The LHS is the integral of $A_0\Psi_k-\Psi_kA_0$. So, by Stokes' Theorem this condition for all $h$ is equivalent to the following.
\[
	\boxed{
	A_0\Psi_k-\Psi_kA_0=d\Psi_{k-1}
	}
\]
If we expand $\Psi_k=\Psi_k(1,0)$ as an iterated integral as in \ref{cor: second iterated integral for Psi}, the LHS of this boxed equation becomes:
\[
	\LHS=\int_0^1dt\sum_{
	p+q+r=k,\ q\ge1
	}\Psi_p(1,t)\left[
	A_0(A_{q+1}/t)-(A_{q+1}/t)A_0
	\right]
	\Psi_r(t,0)
\]
and the RHS of the boxed equation becomes:
\[
	d\Psi_{k-1}=\int_0^1dt\sum_{
	p+q+r=k,\ q\ge1
	}\Psi_p(1,t)\left[
	d(A_{q}/t)
		\right]
	\Psi_r(t,0).
\]
By Lemma \ref{first lemma} this is equal to
\[
	\RHS=\int_0^1dt\sum_{
	p+q+r=k,\ q\ge1
	}\Psi_p(1,t)\left[
	\frac\d{\d t}A_q^\perp+(dA_q)/t
		\right]
	\Psi_r(t,0).
\]

As before we consider the $k$-parameter family of functions
\[
	f_u(t)=\sum_{p+q+r=k,\ q\ge1}\Psi_p(1,t)A_q^\perp(t,u) \Psi_r(t,0), u\in I^k.
\]
Since $A_q^\perp=0$ at $t=0,1$ we have
\[
	\int_0^1dt\,\frac\d{\d t}f_u(t)=0
\]
for all $u$. The derivative of $f_u(t)$ has three terms:
\[
	\frac\d{\d t}f_u(t)=\sum -\Psi_i(A_j/t)A_q^\perp \Psi_r
	+\Psi_p\left(\frac\d{\d t}A_q^\perp\right)\Psi_r
	+\Psi_p A_q^\perp \left(A_i/t\right)\Psi_j.
\]
If we change the names of the indices we can collect terms to get:
\[
	\frac\d{\d t}f_u(t)=\sum \Psi_p\left[
	\sum_{i+j=q+1} -(A_i/t)A_j^\perp+
	\frac\d{\d t}A_q^\perp
	+\sum_{i+j=q+1}A_i^\perp (A_j/t)
	\right]
	\Psi_r
\]
\[
	=\sum_{p+q+r=k,\ q\ge1} \Psi_p\left[
	\sum_{i+j=q+1} (A_iA_j)/t+
	\frac\d{\d t}A_q^\perp
	\right]
	\Psi_r
\]where $i,j\ge1$ in the sum.
Therefore, the RHS of the boxed equation is
\[
	\int_0^1dt\sum_{
	p+q+r=k,\ q\ge1
	}\Psi_p\left[
	\sum_{i=1}^q -(A_iA_{q-i+1})/t+(dA_q)/t
		\right]
	\Psi_r.
\]
If we compare this with the LHS we get the following theorem whose proof is the same as the proof of the previous theorem \ref{previous thm}.

\begin{thm}\label{thm:flat superconnections give higher homotopies} Suppose that $A_0^2=0$ at all points of $M$ and $k\ge1$. Then, the following three conditions are equivalent.
\begin{enumerate}
\item The integral of $\Psi_q$ over the cube $I^q$ gives the null homotopy of the alternating sum of the integrals of $\Psi_{q-1}$ over the faces of $I^q$, i.e.,
\[
	A_0\int_{I^q}\Psi_q-(-1)^q\left(\int_{I^q}\Psi_q\right)A_0=\int_{\d I^q}\Psi_{q-1}
\] for all maps $h:I^q\to\PathMx$ for all $0\le q\le k$.
\item The following holds at all paths $\g\in\Path(M,x_0,x_1)$ and for all $0\le q\le k$:
\[
	{
	A_0\Psi_q-\Psi_qA_0=d\Psi_{q-1}
	}
\]
\item The following condition holds for all $0\le q\le k$ at all points of $M$ in the component of $x_0,x_1$:
\[
	\boxed{
	dA_q=\sum_{i=0}^{q+1}A_iA_{q-i+1}
	}
\]
\end{enumerate}
Furthermore, these equivalent conditions, plus the condition $A_0^2=0$, hold for all $k$ if and only if $D$ is flat, i.e., $D^2=0$.
\end{thm}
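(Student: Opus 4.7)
The plan is to prove the equivalence $(1) \iff (2) \iff (3)$ for each $k$, and then unpack $D^2=0$ into the collection of identities in $(3)$ for all $q \geq -1$ (with $A_0^2=0$ being the $q=-1$ case, interpreting $A_{-1}=0$).

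\textbf{Step 1: $(1) \iff (2)$.} This follows directly from Stokes' theorem. Integrating $A_0 \Psi_q - \Psi_q A_0 = d \Psi_{q-1}$ over $I^q$ gives the integrated identity in $(1)$, since $A_0$ on the target fiber $V_{x_1}$ and on the source fiber $V_{x_0}$ are constant in the parameters $u \in I^q$ (they only depend on the endpoints, which are fixed). The sign $(-1)^q$ on the right side of the $A_0$ in $(1)$ appears because $A_0$, being odd, must commute past the $q$-form $\Psi_q$. Conversely, if $(1)$ holds for all smooth $h$, then in particular it holds for arbitrarily small cubes localized near any path $\gamma$, so the pointwise identity $(2)$ must hold.

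\textbf{Step 2: $(3) \implies (2)$.} This is exactly the calculation already carried out in the excerpt leading to the statement of the theorem. The key trick is to introduce the auxiliary family of forms
\[
f_u(t) = \sum_{p+q+r=k,\; q \geq 1} \Psi_p(1,t)\, A_q^\perp(t,u)\, \Psi_r(t,0)
\]
and observe that $f_u(0) = f_u(1) = 0$ (because $A_q^\perp$ vanishes at the fixed endpoints of the path family). Hence $\int_0^1 \partial_t f_u(t)\,dt = 0$. Expanding $\partial_t f_u$ using the product rule, the differential equations defining $\Psi_p$ and $\Psi_r$, together with Lemmas \ref{first lemma} and \ref{second lemma}, converts the difference $\mathrm{LHS}-\mathrm{RHS}$ of the boxed identity in $(2)$ into an integral whose integrand is
\[
\sum_{p+q+r=k,\; q\geq 1} \Psi_p(1,t) \left[ dA_q - \sum_{i=0}^{q+1} A_i A_{q-i+1} \right]\!\Big/t\,\; \Psi_r(t,0),
\]
after assembling the $A_0 A_{q+1}$ and $A_{q+1} A_0$ terms from the $\mathrm{LHS}$ with the $A_i A_j$ terms from $\partial_t A_q^\perp$ and with $(dA_q)/t$. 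Assuming $(3)$, each bracket vanishes, so $(2)$ holds.

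\textbf{Step 3: $(2) \implies (3)$.} This is the localization argument used in Theorem \ref{previous thm}. Suppose $(3)$ fails at some point $z \in M$ in the connected component of $x_0$ and $x_1$, for some $q \leq k$. Then the $(q+1)$-form $dA_q - \sum A_i A_{q-i+1}$ is nonzero at $z$, so it is nonzero on some tangent $(q+1)$-plane at $z$. Construct a smooth family of paths $\gamma_u$ (parametrized by $u \in I^q$) that agrees with a fixed path through $x_0, z, x_1$ outside a small neighborhood of $z$, but in that neighborhood sweeps out an arbitrarily small $(q+1)$-cube tangent to this plane. Letting the size of the perturbation shrink, the integrated identity $(1)$ (equivalent to $(2)$) forces the bracketed expression to vanish at $z$, contradicting the hypothesis. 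Hence $(3)$ must hold at every point.

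\textbf{Step 4: Flatness.} It remains to observe that $D = d - \sum_{i\geq 0} A_i$ satisfies $D^2 = 0$ if and only if $A_0^2 = 0$ together with the family of identities $dA_q = \sum_{i=0}^{q+1} A_i A_{q-i+1}$ for all $q \geq 0$. Indeed, $D^2$ is a sum of operators of various form-degrees; the degree-$(q+1)$ part is precisely $-dA_q + \sum_{i=0}^{q+1} A_i A_{q-i+1}$ (with appropriate signs absorbed into the convention that the graded connection contributes $-A_1$). Setting each homogeneous component to zero gives exactly the listed identities, completing the equivalence with flatness.

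\textbf{The main obstacle} is bookkeeping in Step 2: tracking the signs (especially those arising from odd forms passing through even forms and from the convention $A_k/t = \iota^R_{\gamma'} ev_t^* A_k$) while decomposing $A_q = A_q^\perp + (A_q/t)\,dt$ and verifying the cancellations produce exactly the expression $dA_q - \sum_{i=0}^{q+1} A_i A_{q-i+1}$. The other steps (Stokes, localization, unpacking $D^2$) are essentially formal.
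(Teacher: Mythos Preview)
Your proposal is correct and follows essentially the same approach as the paper: the equivalence $(1)\iff(2)$ by Stokes and localization, the implication $(3)\Rightarrow(2)$ via the auxiliary form $f_u(t)$ together with Lemmas~\ref{first lemma} and~\ref{second lemma}, and $(2)\Rightarrow(3)$ by the same localization argument used in Theorem~\ref{previous thm}. The paper in fact states only that the proof ``is the same as the proof of the previous theorem,'' so your write-up is if anything more explicit than the original.
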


\begin{defn}
We say that the superconnection $D=\nabla-A_0-A_2-A_3-\cdots$ on $V$ is \emph{flat} if $D^2=0$. This is equivalent to the boxed equation above for all $q\ge-1$ at all points of $M$ (where $A_{-1}=0$) since
\[
	d\circ A+A\circ d=dA.
\]
Note that the condition $D^2=0$ implies all of the boxed equations in this section.
\end{defn}


\subsection{Hidden signs} There are hidden signs in the matrix forms $A_pA_q$ and in the superconnection parallel transport $\Psi_k$. They appear when we collect the $\End(V)$ and $\Omega^\ast(M)$ terms. For example, the product $A_pA_q$ has a hidden sign of $(-1)^{p(q-1)}$:
\[
	A_pA_q=\sum (f_i\otimes\a_i)(g_j\otimes\b_j)=\sum(-1)^{p(q-1)}f_ig_j\otimes\a_i\b_j
\]
We are also using supercommutation rules for differentiation:
\[
	dA_p=d\sum_i f_i\otimes\a_i=(-1)^{p-1} \sum_{i,j} \frac{\d}{\d x_j}f_i\otimes dx_j \a_i+(-1)^{p-1}\sum_i f_i\otimes d\a_i
\]
The signs in front of the two sums on the right must be equal since the tensor product is over $\Omega(M)$ and the sign on the second sum is given by the Koszul sign rule.
The superconnection parallel transport has a more complicated hidden sign:
\[
	\Psi=\sum\iint_{t\ge u_1\ge\cdots\ge  u_n\ge s}du_1\cdots du_n\,
	(A_{k_1}/u_1)(A_{k_2}/u_2)\cdots(A_{k_n}/u_n)
\]
\[
	=\sum\iint du_1\cdots du_n\,(f_{i_1}\otimes\a_{i_1})\cdots(f_{i_n}\otimes\a_{i_n})
\]
\[
	=\sum (-1)^x\iint du_1\cdots du_n\,f_{i_1}\cdots f_{i_n}\otimes\a_{i_1}\cdots\a_{i_n}
\]
where
\[
	x=\sum_{1\le i<j\le n}(k_i-1)(k_j-1).
\]
And, as we saw before, integration does not commute with multiplication:
\[
	\left(\int_{I^k}\Psi_k\right)A_0=(-1)^k\int_{I^k}\left(\Psi_k A_0\right).
\]
This will become important in the next section.

%
%

\section{$A_\infty$ functors and twisting cochains}\label{sec 4: A-infty and twisting}

If we integrate a flat superconnection on a graded vector bundle $V$ over smooth simplices in $M$ we get an $A_\infty$ functor. In local coordinates this becomes a twisting cochain. With the properties of superconnection parallel transport that we have derived so far, this will all be fairly easy.


\subsection{Twisting cochains} We will go over the general definition of a twisting cochain (in the cohomology setting) and then go to the particular case of twisting cochains on $C=C_\ast(M;K)$ with coefficients in $E=\End(V_\ast)$. First, the basic definitions.


\subsubsection{cup product}

We define the cup product of two cochains $\f,\psi$ on a differential graded coalgebra $C$ with coefficients in a differential graded algebra $E$ by
\[
	\f\cup' \psi:=\mu(\f\otimes\psi)\Delta
\]
where $\Delta:C\to C\otimes C$ is the diagonal map and $\mu:E\otimes E\to E$ is the product map. This is not the standard definition since signs appear when the terms are expanded. However, it has the advantage of simplicity. For example, it is obviously associative if $\Delta$ is coassociative and $\mu$ is associative. The standard cup product (without signs in its expanded form) will be denoted $\cup$ (without the prime) as in the third paragraph below.

The particular case we are considering is the coalgebra $C=C_\ast(M;K)$ of (smooth) singular chains in the manifold $M$ with coefficients in a commutative ring $K$. By this we mean that $C_n$ is the free $K$-module generated by all smooth mappings
\[
	\s:\Delta^n\to M
\]
where $\Delta^n$ is the linear simplex in Euclidean space spanned by a fixed set of vertices $v_0,v_1,\cdots,v_n$ which will be specified later. 

The diagonal map on $C$ is given in the standard way by
\[
	\Delta \s=\sum_{p+q=n} f_p\s\otimes b_{q}\s
\]
where $f_p\s=\s[v_0,\cdots,v_p]$ is the front $p$-face of $\s$ and $b_q\s=\s[v_p,\cdots,v_n]$ is the back $q$-face of $\s$. We use the notation that
\[
	[v_{j_0},\cdots,v_{j_k}]:\Delta^k\to\Delta^n
\]
for the affine linear map which sends the vertex $v_i\in\Delta^k$ to $v_{j_i}\in\Delta^n$.

If $\f\in C^p(M;K)=\Hom_K(C_p(M;K),K)$ and $\psi\in C^q(M;K)$ then
\[
	(\f\cup'\psi)\s=\mu(\f\otimes \psi)\Delta \s
\]
\[
	=\mu(\f\otimes \psi)(f_p\s\otimes b_q\s)=(-1)^{pq}\f(f_p\s)\psi(b_q\s)
\]
\[
	=(-1)^{pq}(\f\cup\psi)\s.
\]
The sign appears when the $q$-cochain $\psi$ and the $p$-chain $f_p\s$ go past each other.


\subsubsection{twisting cochains in general}

\begin{defn} Let $C$ be a differential nonnegatively graded coalgebra over a commutative ring $K$ and let $E$ be a differential $\ZZ$-graded algebra over $K$ which we index with two different notations: $E_n=E^{-n}$. Then a \emph{twisting cochain} on $C$ with coefficients in $E$ is a $K$-linear map $\t:C\to E_\ast$ of degree $-1$, i.e., $\t=\sum_{n\ge0}\t_n$ where  $\t_n:C_n\to E_{n-1}=E^{1-n}$ so that $\t_0=0$ and
\[
\d \t=\t\d-\t\cup'\t.
\]
\end{defn}

Now consider the special case when the differential $\d:E^n\to E^{n+1}$ in $E$ is given by commutation with a square zero element $d\in E^1$:
\[
	\d a=da-(-1)^n ad
\]
for all $a\in E_n$. If $c\in C_n, n>0$ then $\t c\in E_{n-1}$. So
\[
	\d(\t_n c)=d(\t_nc)+(-1)^n(\t_nc)d.
\]
The definition of twisting cochain can now be written:
\[
	d(\t_nc)+(-1)^n(\t_nc)d=\t_{n-1}(\d c)-\sum(-1)^{\deg c_{(1)}}\t(c_{(1)})\t(c_{(2)})
\]
where we used the Sweedler notation \cite{Sweedler}:
\[
	\Delta c=\sum_{(c)}c_{(1)}\otimes c_{(2)}
\]
with the index of summation $(c)$ suppressed.
In the special case $n=1$, the RHS is zero and the equation becomes:
\[
	d(\t_1c)=(\t_1c)d.
\]
This is assuming that $d\in E^1$ is fixed. However, in our case $d=A_0$ is variable and the situation is a little different.


\subsubsection{variable $d$}

We need $d$ to be a function of $c$. Loosely speaking, it should be a function of the ``first vertex'' of $c$ when it is on the left and the ``last vertex'' of $c$ when it is on the right. More precisely, $d$ will be a $K$-linear mapping:
\[
	d:C_0\to E^1=E_{-1}
\]
so that $\ov\t=d+\t$ is a $K$-linear map $C\to E_\ast$ of degree $-1$ satisfying the following condition.
\[
	\boxed{\ov\t\d=\ov\t\cup'\ov\t}
\]
The equation $\ov\t=d+\t$ is the same as saying $\ov\t_0=d$ and $\ov\t_k=\t_k$ for all $k>0$. We will call $\ov\t$ an \emph{augmented twisting cochain} if it satisfies the above boxed equation. We will now examine this new boxed formula to determine:
\begin{enumerate}
\item that it reduces to the previous definition in the case when $d$ is fixed and
\item to see what it says when $d$ is variable in the case $C=C_\ast(M;K)$.
\end{enumerate}

\begin{prop}
Suppose that $d\in E^1$ is fixed and $\ov\t_0:C_0\to E^1$ is given by
\[
	\ov\t_0(c)=\e(c)d
\]
where $\e:C_0\to K$ is the counit of $C$. Then $\ov\t=\ov\t_0+\t$ is an augmented twisting cochain if and only if $d^2=0$ and $\t$ is a twisting cochain where the differential on $E$ is given by commutation with $d$.
\end{prop}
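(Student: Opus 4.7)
The plan is to evaluate the boxed equation $\bar\tau\partial = \bar\tau\cup'\bar\tau$ on a homogeneous element $c\in C_n$ and expand systematically using $\bar\tau=\bar\tau_0+\tau$. The key ingredients are the counit axiom $(\epsilon\otimes 1)\Delta=1_C$, the fact that $\epsilon$ is a chain map to $K$ (so $\epsilon\partial=0$), and the Koszul sign rule. From $\cup'=\mu(\bar\tau\otimes\bar\tau)\Delta$ and $|\bar\tau|=-1$, one has
\[
(\bar\tau\cup'\bar\tau)(c)=\sum_{(c)}(-1)^{|c_{(1)}|}\bar\tau(c_{(1)})\bar\tau(c_{(2)}).
\]

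First I would dispose of the degree zero case. For $c\in C_0$, $\partial c=0$ and $\Delta c\in C_0\otimes C_0$, so every $\bar\tau$ on the right reduces to $\bar\tau_0(x)=\epsilon(x)d$. Using $\sum\epsilon(c_{(1)})\epsilon(c_{(2)})=\epsilon(c)$ (the counit axiom applied twice), the equation at $c$ becomes $0=\epsilon(c)d^2$. Since $\epsilon$ is surjective whenever $C\neq 0$, this is equivalent to $d^2=0$.

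Next, for $c\in C_n$ with $n\ge 1$, the term $\bar\tau_0(\partial c)=\epsilon(\partial c)d$ vanishes because $\epsilon\partial=0$, so the left side equals $\tau(\partial c)$. On the right I would split the sum by the four possible placements of $\bar\tau_0$ and $\tau$ on the two tensor factors. The piece $\bar\tau_0\otimes\bar\tau_0$ vanishes because $c_{(1)}$ and $c_{(2)}$ cannot both lie in $C_0$ when $n\ge 1$. The piece $\bar\tau_0\otimes\tau$ forces $c_{(1)}\in C_0$, so the sign is $(-1)^0=1$ and the counit collapses the sum to $d\,\tau(c)$. Symmetrically, the piece $\tau\otimes\bar\tau_0$ forces $|c_{(1)}|=n$ and yields $(-1)^n\tau(c)\,d$. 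The piece $\tau\otimes\tau$ is $(\tau\cup'\tau)(c)$, since $\tau_0=0$ automatically kills any contribution where either factor lies in $C_0$. Combining these, the boxed equation at $c$ reads
\[
\tau(\partial c)=d\,\tau(c)+(-1)^n\tau(c)\,d+\sum_{(c)}(-1)^{|c_{(1)}|}\tau(c_{(1)})\tau(c_{(2)}).
\]
After rewriting $d\,\tau_n c+(-1)^n\tau_n c\,d$ as $\partial(\tau_n c)$ using the commutator differential on $E$ (with the index conversion $\tau_n c\in E_{n-1}=E^{1-n}$), this is precisely the reformulated twisting cochain condition displayed in the paper just above the proposition. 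Every step of the derivation is reversible, so both directions follow.

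The main obstacle is purely bookkeeping: one must verify that the sign $(-1)^{|c_{(1)}|}$ arising from the Koszul shuffle of $\bar\tau\otimes\bar\tau$ across $c_{(1)}\otimes c_{(2)}$ matches the sign $(-1)^{\deg c_{(1)}}$ used in the paper's twisting cochain formula, and that the commutator sign $-(-1)^{1-n}=(-1)^n$ emerges correctly from the index identification $|\tau_n c|=1-n$. Otherwise the argument is a direct unpacking of definitions using the counit axiom and the vanishing $\tau_0=0$.
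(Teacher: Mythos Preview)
Your proof is correct and follows essentially the same approach as the paper's: evaluate the boxed equation on $c\in C_n$, handle $n=0$ via the counit to extract $d^2=0$, and for $n\ge1$ use the counit axiom to identify the $\bar\tau_0\cup'\tau$ and $\tau\cup'\bar\tau_0$ pieces with $d\tau(c)$ and $(-1)^n\tau(c)d$. You are in fact slightly more explicit than the paper in noting that $\bar\tau_0(\partial c)=\epsilon(\partial c)d=0$ (needed when $n=1$) and that the $\bar\tau_0\otimes\bar\tau_0$ term vanishes for degree reasons.
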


\begin{proof}
If we apply the operators in the boxed equation to $c\in C$ we get:
\[
	\ov\t\d c=\sum(-1)^{\deg c_{(1)}} \ov\t(c_{(1)})\ov\t(c_{(2)}).
\]
When $c\in C_0$ the LHS is 0 and the RHS is $\e(c)d^2$. So, the equation holds for all $c\in C_0$ if and only if $d^2=0$. If $c\in C_n$ for $n>0$ we use the definition of a counit which says:
\[
	c=\sum\e(c_{(1)})c_{(2)}=\sum c_{(1)}\e(c_{(2)}).
\]
The first equation (which says $\e$ is a left counit) gives
\[
	d\t(c)=\sum \underbrace{\e(c_{(1)})d}_{\ov\t_0(c_{(1)})}\t(c_{(2)})=(\ov\t_0\cup'\t)(c).
\]
The second (which says $\e$ is a right counit) gives
\[
	\t(c)d=\sum \t(c_{(1)})\underbrace{\e(c_{(2)})d}_ {\ov\t_0(c_{(2)})} =(-1)^{n}(\t\cup'\ov\t_0)(c).
\]
Therefore,
\[
	(\ov\t\cup'\ov\t)(c)=d\t(c)+(-1)^n\t(c)d+(\t\cup'\t)(c)
\]
which transforms the boxed equation into the equation in the definition of a twisting cochain.
\end{proof}


\subsubsection{twisting cochains on $M$}
We are particularly interested in the case where $C=C_\ast(M;K)$ and $E=\End(V)$. The reason is that we are trying to understand the relationship between the higher Reidemeister torsion of \cite{IBookOne}, \cite{IComplexTorsion} and the higher analytic torsion of \cite{BismutLott95}, \cite{BG2}, \cite{Goette01}, \cite{Goette03}. The higher Reidemeister torsion is a sequence of cohomology classes on $M$ which comes from integrating flat superconnections corresponding to smooth families of acyclic chain complexes over $M$. See \cite{ITwisted} for further details on this interpretation of the higher torsion. When explicit formulas for these superconnections are given, the integral can sometimes be computed. For example, see \cite{IBookOne}, Chapter 7.

In the case where $C=C_\ast(M;K)$, $C_0=C_0(M;K)$ is freely generated by 0-simplicies $v:\Delta^0\to M$ which we identify with their images in $M$ and call ``vertices.'' We have the following proposition which follows directly from the definitions.

\begin{prop}\label{prop:expand augmented twisting cochain}
Suppose that $\ov\t:C_\ast(M;K)\to E$ is an augmented twisting cochain. Then for any vertex $v\in C_0(M;K)$, $d_v=\ov\t_0(v)$ is a square-zero element of $E^1$. Furthermore, if $d_v\in E^1, v\in M$, is any family of square-zero elements  and $\t=\sum_{i\ge1}\t_i,\t_i:C_i\to E_{i-1}$, then $\ov\t=d+\t$ is an augmented twisting cochain (i.e., satisfies the boxed equation above) if and only if the following holds for all simplices $\s:\Delta^n\to M$, $n\ge 1$.
\[
	d_{\s(v_0)}(\t\s)+(-1)^n(\t\s)d_{\s(v_n)}=\ov\t(\d \s)-\sum_{p=1}^{n-1}(-1)^p\t(\s[v_0,\cdots,v_p])\t(\s[v_p,\cdots,v_n]).
\]
\end{prop}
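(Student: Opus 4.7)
The plan is to unpack the defining boxed identity $\ov\t\d=\ov\t\cup'\ov\t$ on chains one dimension at a time. Applied to $c\in C_n$, the Koszul rule (exactly as in the paper's $(-1)^{pq}$ computation) gives
\[
(\ov\t\cup'\ov\t)(c)=\sum_{(c)}(-1)^{\deg \ov\t\cdot \deg c_{(1)}}\ov\t(c_{(1)})\ov\t(c_{(2)})=\sum_{(c)}(-1)^{\deg c_{(1)}}\ov\t(c_{(1)})\ov\t(c_{(2)}),
\]
since $\deg\ov\t=-1$. So the boxed identity is equivalent to requiring $\ov\t(\d c)$ to equal this sum for every homogeneous $c$. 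To dispatch the first claim, I specialize to a $0$-simplex $v$: then $\d v=0$ and $\Delta v=v\otimes v$, so the equation collapses to $0=\ov\t_0(v)^2=d_v^2$, proving that each $d_v\in E^1$ is square-zero.

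For the ``furthermore'' direction I apply the same identity to an $n$-simplex $\s:\Delta^n\to M$ with $n\ge1$, using the front-face/back-face diagonal
\[
\Delta\s=\sum_{p+q=n}\s[v_0,\dots,v_p]\otimes\s[v_p,\dots,v_n].
\]
The cup product side becomes $\sum_{p=0}^n(-1)^p\ov\t(\s[v_0,\dots,v_p])\ov\t(\s[v_p,\dots,v_n])$. I then split off the two extreme indices. At $p=0$ the front face is the $0$-simplex $\s(v_0)$, contributing $d_{\s(v_0)}\t(\s)$; at $p=n$ the back face is the $0$-simplex $\s(v_n)$, contributing $(-1)^n\t(\s)d_{\s(v_n)}$. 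The indices $1\le p\le n-1$ give precisely the sum $\sum_{p=1}^{n-1}(-1)^p\t(\s[v_0,\dots,v_p])\t(\s[v_p,\dots,v_n])$, involving only the positive-degree part $\t$. Rearranging the identity to isolate the two extreme terms on the left yields exactly the displayed formula in the proposition. The converse is automatic, since every manipulation is reversible: under the square-zero hypothesis the boxed identity holds on $C_0$ for free, and on $C_n$ with $n\ge1$ it is equivalent to the displayed equation.

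The only subtlety worth flagging is sign bookkeeping. The convention $\f\cup'\psi=\mu(\f\otimes\psi)\Delta$ is signless at the operator level, but a Koszul sign $(-1)^{\deg\ov\t\cdot\deg c_{(1)}}=(-1)^{\deg c_{(1)}}$ materializes on evaluation, producing the $(-1)^p$ in each summand and, crucially, the factor $(-1)^n$ in front of $\t(\s)d_{\s(v_n)}$; no further signs are introduced by $\mu$ since multiplication in $E$ is associative and unital. The identification of the extreme terms is routine: $\s[v_0]=\s(v_0)$ and $\s[v_n]=\s(v_n)$ as $0$-simplices, so $\ov\t_0$ evaluated on them returns exactly $d_{\s(v_0)}$ and $d_{\s(v_n)}$.
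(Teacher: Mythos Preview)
Your proof is correct and is exactly the ``follows directly from the definitions'' argument the paper alludes to but does not write out: you unpack $\ov\t\cup'\ov\t$ on a simplex via the front-face/back-face diagonal, apply the Koszul sign $(-1)^{\deg c_{(1)}}$ coming from $\deg\ov\t=-1$, peel off the $p=0$ and $p=n$ terms to produce $d_{\s(v_0)}\t(\s)$ and $(-1)^n\t(\s)d_{\s(v_n)}$, and observe that the remaining sum and the $\ov\t(\d\s)$ term give the displayed identity. The paper offers no further detail, so there is nothing to compare beyond noting that your signs and your handling of the degree-zero case ($\d v=0$, $\Delta v=v\otimes v\Rightarrow d_v^2=0$) are correct.
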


Let us look at the case $n=1$. Then $\s:\Delta^1\to M$ is a path connecting $x_0=\s(v_0)$ to $x_1=\s(v_1)$ and the above equation becomes:
\[
	d_{x_0}(\t_1\s)-(\t_1\s)d_{x_1}=\ov\t_0\d\s=d_{x_1}-d_{x_0}.
\]
We can rewrite this as:
\[
	\boxed{d_{x_0}(1+\t_1\s)=(1+\t_1\s)d_{x_1}}
\]
which can be written
\[
	d_{x_0}\f_\s=\f_\s d_{x_1}
\]
when we use the abbreviation $\f(\s)=\f_\s:=1+\t_1\s$. Note that the linear extension of $\f$ to $C_0(M;K)$ is
\[
	\f=u\circ\e+\t_1:C_0(M;K)\to E_0.
\]
where $\e:C_0\to K$ is the counit (the augmentation map) and $u:K\to E_0$ is the unit (multiplication by $1\in E_0$).

When $n\ge3$ something similar happens. Namely, there will always be two pairs of terms in the equation in Proposition \ref{prop:expand augmented twisting cochain} which can be combined. It is helpful to count the number of terms in the equation. There are two terms on the left and there are $2n$ terms on the right since $\d\s$ has $n+1$ terms. When we combine two pairs, this will be reduced to $2n-2$ terms on the right. When $n=2$ we also get a reduction of $2n=4$ terms to $2n-2=2$ terms but in a slightly different way.

Here is the calculation for $n\ge3$.
\[
	\ov\t\d\s=\t\d\s=\sum_{i=0}^n (-1)^i \t(\s[v_0,\cdots,\what{v_i},\cdots,v_n])
\]
\[
	=\t(\s[v_1,\cdots,v_n])+(-1)^n\t(\s[v_0,\cdots,v_{n-1}])+\sum_{i=1}^{n-1}(-1)^i\t(\d_i\s).
\]
We can combine the $i=0$ and $i=n$ terms with the $p=1$ and $p=n-1$ terms in the equation which are:
\[
	\t_1(\s[v_0,v_1])\t(\s[v_1,\cdots,v_n])+(-1)^n\t(\s[v_0,\cdots,v_{n-1}])\t_1(\s[v_{n-1},v_n]).
\]
The equation in Proposition \ref{prop:expand augmented twisting cochain} can now be rewritten with 2 fewer terms as follows.
\[
	d(\t\s)+(-1)^n(\t\s)d=\f(f_1\s)\t(b_{n-1}\s)+(-1)^n\t(f_{n-1}\s)\f(b_1\s)
\]
\[
	+\sum_{i=1}^{n-1}(-1)^i\t(\d_i\s)-\sum_{p=2}^{n-2}\t(f_p\s)\t(b_{n-p}\s).
\]

In the case $n=2$ we get:
\[
	d(\t\s)+(\t\s)d=\f(f_1\s)\f(b_1\s)-\f(\d_1\s).
\]
We summarize this calculation as follows.

\begin{cor}\label{characterization of augmented twisting cochains}
Let $\psi_i:C_i(M;K)\to E^{1-i}, i\ge0$ be any sequence of $K$-linear map. Let $\psi=\sum \psi_i$. Then $\ov\t=\psi-u\circ \e$ is an augmented twisting cochain on $C_\ast(M;K)$ with coefficients in $E_\ast$ if and only if
\[
	\sum_{i=1}^{n-1}(-1)^i\psi_{n-1}(\d_i c)=(\psi\cup'\psi)c
\]
for all $c\in C_n(M;K),n\ge0$.
\end{cor}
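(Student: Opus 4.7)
The plan is to verify the stated identity one simplex at a time, matching it degree by degree with the relation already extracted in Proposition \ref{prop:expand augmented twisting cochain}. The structural content has essentially been done; what remains is sign bookkeeping and a careful accounting of how the augmentation piece $u\circ\e$ absorbs the boundary faces.

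First I will expand the cup product on a simplex $\s\in C_n$. Using $\Delta\s=\sum_{p=0}^{n}f_{p}\s\otimes b_{n-p}\s$ and the Koszul rule (each $\psi$ has total degree $-1$, so moving past the front $p$-face contributes $(-1)^{p}$), I obtain
\[
(\psi\cup'\psi)(\s)=\sum_{p=0}^{n}(-1)^{p}\,\psi_{p}(f_{p}\s)\,\psi_{n-p}(b_{n-p}\s).
\]
The cases $n=0$ and $n=1$ are immediate. At $n=0$, $\Delta v=v\otimes v$, the left side is empty, and the right side is $\psi_{0}(v)^{2}=d_{v}^{2}$; so the equation reads $d_{v}^{2}=0$, the square-zero condition. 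At $n=1$ the equation reads $0=d_{x_{0}}\psi_{1}(\s)-\psi_{1}(\s)d_{x_{1}}$, which, after writing $\psi_{1}=1+\t_{1}$, is exactly the boxed chain-map equation $d_{x_{0}}(1+\t_{1}\s)=(1+\t_{1}\s)d_{x_{1}}$ preceding Proposition \ref{prop:expand augmented twisting cochain}.

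For $n\ge 2$ I isolate the extreme terms $p=0$ and $p=n$ in the sum, which combine to $d_{x_{0}}\psi_{n}(\s)+(-1)^{n}\psi_{n}(\s)d_{x_{n}}$, i.e. the left-hand side of the identity in Proposition \ref{prop:expand augmented twisting cochain}. The interior terms $1\le p\le n-1$ must match the product terms on the right of that proposition, with one subtlety: at $p=1$ and $p=n-1$, the factor $\psi_{1}=1+\t_{1}$ splits into an identity part plus a correction. The identity part at $p=1$ contributes $-\psi_{n-1}(\s[v_{1},\ldots,v_{n}])=-\psi_{n-1}(\d_{0}\s)$, and the identity part at $p=n-1$ contributes $(-1)^{n-1}\psi_{n-1}(\s[v_{0},\ldots,v_{n-1}])=(-1)^{n-1}\psi_{n-1}(\d_{n}\s)$. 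Transposing these two contributions converts the interior sum $\sum_{i=1}^{n-1}(-1)^{i}\psi_{n-1}(\d_{i}\s)$ in the corollary into the full boundary $\sum_{i=0}^{n}(-1)^{i}\psi_{n-1}(\d_{i}\s)=\ov\t(\d\s)$ appearing in the proposition, while the remaining $\t_{1}$ correction pieces at $p=1,n-1$ combined with the genuine $\psi_p(f_p\s)\psi_{n-p}(b_{n-p}\s)$ for $2\le p\le n-2$ assemble precisely into $\sum_{p=1}^{n-1}(-1)^{p}\t(\s[v_{0},\ldots,v_{p}])\t(\s[v_{p},\ldots,v_{n}])$.

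Thus for each $n\ge 0$ the corollary's identity in degree $n$ is equivalent to the identity of Proposition \ref{prop:expand augmented twisting cochain} in degree $n$ (with the $n=0$ case supplying the square-zero hypothesis of the proposition). Both directions follow. The main obstacle is careful tracking of signs: one must keep straight the Koszul sign $(-1)^{p}$ in the cup product expansion, the sign $(-1)^{n}$ in the right-action of $d$ in the chain-map equation, and the two separate identity contributions at $p=1$ and $p=n-1$ that together cancel the two boundary face terms.
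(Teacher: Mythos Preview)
Your proof is correct and follows essentially the same route as the paper, which performs exactly this calculation in the paragraphs preceding the corollary (the cases $n=1$, $n=2$, $n\ge3$) and then states the corollary as a summary. One small imprecision: for $n=2$ the indices $p=1$ and $p=n-1$ coincide, so both factors in the single interior term are $\psi_1=1+\t_1$, and your separate extraction of ``the identity part at $p=1$'' and ``the identity part at $p=n-1$'' double-counts the constant $1\cdot 1$; this produces an extra $+1$ on each side which cancels, so the conclusion is unaffected, but the intermediate claim that the remaining pieces assemble \emph{precisely} into $-\t_1(f_1\s)\t_1(b_1\s)$ is off by that constant. The paper avoids this by treating $n=2$ as its own case.
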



\subsubsection{coefficients in $E=\End(V_\ast)$}

Suppose that $V_\ast$ is a fixed $\ZZ$-graded $K$-module which we also write as $V^n=V_{-n}$. Then $E=\End(V)$ is the $\ZZ$-graded $K$-algebra $E=\bigoplus E_n$ where
\[
	E_n=E^{-n}=\prod \Hom(V_i,V_{i+n}).
\]
If $d_v\in E^1$, $v\in M$, is a family of square-zero endomorphisms of $V$ then
\[
	V_v=(V^\ast,d_v)
\]
is a family of cochain complexes parametrized by $v\in M$. The last boxed equation says that, for all smooth 1-simplices $\s:\Delta^1\to M$, 
\[
	\f_\s=1+\t_1\s:V_{\s(v_1)}\to V_{\s(v_0)}
\]
is a cochain map.

The $n=2$ equation above says that, for 2-simplices $\s:\Delta^2\to M$, $\t_2\s$ is a cochain homotopy:
\[
	\t_2\s:\f_{\s[v_0,v_2]}\simeq \f_{\s[v_0,v_1]}\f_{\s[v_1,v_2]}.
\]

For $n\ge3$, $\t_n\s$ is a ``higher homotopy.'' We will come back to this shortly.


\subsection{Families of paths in $\Delta^k$}\label{subsec: integration over simplices}

We want to integrate flat superconnections on simplices. We know how to integrate them over cubes. To integrate over a simplex we just need a map
\[
	\th:I^k\to\Delta^k
\]
which we view as a family of paths $\th_w:I\to \Delta^k$ parametrized by $w\in I^{k-1}$:
\[
	\theta_{(k)}:I^{k-1}\to \Path\Delta^k.
\]
This mapping is not the same as the mapping given \cite{KadSane05} but it has similar properties. It is a composition of two piecewise linear maps:
\[
	I^k\xrarrow{\ll} I^k\xrarrow{\pi_k}\Delta^k.
\]

First we describe $\pi_k$.
\[
	\Delta^k:=\{y\in \RR^k\st 1\ge y_1\ge y_2 \ge\cdots\ge y_k\ge0\}.
\]
This is a closed subset of $I^k$ and a retraction $\pi_k:I^k\to\Delta^k$ is given by $\pi_k(x)=y$ where
\[
	y_i=\max(x_i,x_{i+1},\cdots,x_k).
\]
We note that the vertices of $\Delta^k$ are $v_0=0$ and
\[
	v_i=(\overbrace{1,1,\cdots,1}^i,0,\cdots,0)=\pi_k(e_i)
\]
and barycentric coordinates on $\Delta^k$ are given by $t_i=y_i-y_{i+1}$ where $y_0=1$ and $y_{k+1}=0$. 

Here are some trivial observations about the mapping $\pi_k$.
\begin{prop}\begin{enumerate}
\item $\pi_k$ is order preserving, i.e., if $x\le x'$ in the sense that $x_i\le x_i'$ for all $i$ then $\pi_k(x)\le\pi_k(x')$. Furthermore, $\pi_k(x)\ge x$.
\item $\pi_k$ sends $\d_i^+I^k=\{x\in I^k\st x_i=1\}=\{x\ge e_i\}$ onto the back $k-i$ face of $\Delta^k$ (the face spanned by $v_i,v_{i+1},\cdots,v_k$ and given by the equation $y\ge v_i$).
\item $\pi_k$ sends $\d_i^-I^k=\{x\in I^k\st x_i=0\}$ onto $\d_i\Delta^k=\{y\in\Delta^k\st y_i=y_{i+1}\}$.
\end{enumerate}
\end{prop}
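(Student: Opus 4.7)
The plan is to verify each part directly from the explicit formula $\pi_k(x)_i = \max(x_i, x_{i+1}, \ldots, x_k)$; since the author labels these ``trivial observations,'' the proof should be a short computation rather than any structural argument.

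For part (1), I would note that order preservation is immediate from the monotonicity of $\max$: if $x_j \le x_j'$ for all $j$, then for every $i$,
\[
\pi_k(x)_i = \max(x_i, \ldots, x_k) \le \max(x_i', \ldots, x_k') = \pi_k(x')_i.
\]
The inequality $\pi_k(x) \ge x$ follows because $x_i$ itself is one of the terms being maximized in $\pi_k(x)_i$.

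For part (2), I would prove the two containments separately. If $x_i = 1$, then for each $j \le i$ the coordinate $\pi_k(x)_j = \max(x_j, \ldots, x_k)$ includes the term $x_i = 1$, so $\pi_k(x)_j = 1$. This forces $\pi_k(x) \ge v_i$, placing the image in the back $(k-i)$-face. For surjectivity, given any $y$ in the back face (so $y_1 = \cdots = y_i = 1$ and $1 \ge y_{i+1} \ge \cdots \ge y_k \ge 0$), taking $x = y$ gives a preimage in $\d_i^+ I^k$ because $x_i = y_i = 1$ and $\pi_k(y) = y$ (since $y \in \Delta^k$ already satisfies the monotonicity defining $\Delta^k$).

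For part (3), if $x_i = 0$ then
\[
\pi_k(x)_i = \max(0, x_{i+1}, \ldots, x_k) = \max(x_{i+1}, \ldots, x_k) = \pi_k(x)_{i+1},
\]
so $\pi_k(x) \in \d_i \Delta^k$. Conversely, given $y \in \Delta^k$ with $y_i = y_{i+1}$, define $x$ by $x_j = y_j$ for $j \ne i$ and $x_i = 0$; using the ordering $y_1 \ge \cdots \ge y_k$ one checks coordinate-by-coordinate that $\pi_k(x) = y$, with the $i$-th coordinate working out precisely because $y_i = y_{i+1}$.

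There is no real obstacle: the only thing requiring any care is bookkeeping with the indices and orderings to confirm that the explicit preimages constructed for the surjectivity statements in (2) and (3) land in the correct face $\d_i^\pm I^k$. Once the preimage $x$ is written down, the verification $\pi_k(x) = y$ is a one-line application of the $\max$ formula together with the weakly decreasing nature of coordinates in $\Delta^k$.
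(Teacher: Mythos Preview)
Your proposal is correct. The paper does not actually give a proof of this proposition: it is introduced as ``some trivial observations about the mapping $\pi_k$'' and stated without argument, so your direct verification from the formula $\pi_k(x)_i=\max(x_i,\ldots,x_k)$ is exactly the kind of short computation the author is leaving to the reader.
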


Next we construct the mapping $\ll:I^k\to I^k$ as a family of paths $\ll_w:I\to I^k$ parametrized by $w\in I^{k-1}$:
\[
	\ll_{(k-1)}:I^{k-1}\to\Path I^k.
\]
If $w=(w_1,w_2,\cdots,w_{k-1})$ then $\ll_w$ will be the path which goes backwards through the $k+1$ points:
\[
	0\ot w_1e_1\ot w_1e_1+w_2e_2\ot\cdots\ot \sum_{i=1}^kw_ie_i
\]
where $w_k=1$. In other words,
\[
	\ll_w\left(1-\frac jk\right)=\sum_{i=1}^jw_ie_i
\]
for $j=0,1,2,\cdots,k$ and we interpolate linearly by
\[
		\ll_w\left(1-\frac {j+s}k\right)=\sum_{i=1}^jw_ie_i+sw_{j+1}e_{j+1}
\]
if $0<s<1$.

We define $\theta_{(k)}$ to be the composition
\[
	\theta_{(k)}=P\pi_k\circ\ll_{(k-1)}:I^{k-1}\to \Path \Delta^k
\]
Here are some elementary properties of this second mapping.

\begin{prop}
\begin{enumerate}
\item The adjoint of $\theta_{(k)}$ is a piecewise linear epimorphism $I^k\onto \Delta^k$.
\item For each $w\in I^{k-1}$, $\th_w$ is a path from $\th_w(0)=v_k$ to $\th_w(1)=v_0$.
\item $\th_w$ passes through the vertex $v_i$ if and only if $w_i=1$.
\item $\theta_{(k)}$ takes each of the $2^{k-1}$ vertices of $I^{k-1}$ to the shortest path from $v_k$ to $v_0$ passing through the corresponding subset of $\{v_1,\cdots,v_{k-1}\}$.
\end{enumerate}
\end{prop}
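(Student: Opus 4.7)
\bigskip
\textbf{Proof proposal.} The plan is to unpack the construction $\theta_{(k)}(w)(t)=\pi_k(\ll_w(t))$ directly and verify each of the four assertions from the explicit formulas. The key observation is that $\ll_w$ is the polygonal arc through the $k+1$ points $P_j(w):=\sum_{l=1}^j w_l e_l$ (with $P_0=0$ and the convention $w_k=1$) at the successive times $t=1-j/k$, interpolated linearly on each sub-interval $[1-(j+1)/k,\,1-j/k]$, and that $\pi_k$ is given coordinate-wise by the running maximum $y_l=\max(x_l,\ldots,x_k)$. So the whole argument is just repeated application of these two formulas.

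For (1), piecewise linearity is built into both $\pi_k$ and $\ll$, hence into the composition. For surjectivity onto $\Delta^k$, I would start with an arbitrary $y=(y_1\ge y_2\ge\cdots\ge y_k)\in\Delta^k$, let $j$ be the largest index with $y_j>0$ (take $j=0$ if $y=0$), set $t=1-j/k$, and $w_l:=y_l$ for $l\le j$ (with arbitrary choices for $l>j$); then $\ll_w(t)=(y_1,\ldots,y_j,0,\ldots,0)$ and $\pi_k$ returns $y$ exactly because $y$ is weakly decreasing, so each running max collapses to the corresponding entry of $y$. For (2) and (3) I would next evaluate $\pi_k$ at the joint $P_j(w)$: since $P_j(w)$ has zeros in coordinates $>j$, the $l$-th entry of $\pi_k(P_j(w))$ is $\max(w_l,\ldots,w_j)$ for $l\le j$ and $0$ otherwise. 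At $j=0$ this gives $v_0$; at $j=k$ the convention $w_k=1$ forces every max to be $1$, producing $v_k$, which is (2). For intermediate $v_i$, I would show that $\pi_k(\ll_w(t))=v_i$ forces $t=1-i/k$ (the coordinates $l>i$ must vanish, and coordinate $i$ must equal $1$), and that this equality then reduces to $\max(w_l,\ldots,w_i)=1$ for all $l\le i$, which is equivalent to $w_i=1$.

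For (4) I would restrict to a vertex $w\in\{0,1\}^{k-1}$ and apply (3): the visited vertices are exactly $\{v_i:w_i=1\}\cup\{v_0,v_k\}$. Enumerating them as $0=i_0<i_1<\cdots<i_r=k$, the running-max formula shows that on the time interval from $t=1-i_s/k$ to $t=1-i_{s-1}/k$ the path sits at $v_{i_{s-1}}$ through the "zero coordinate" sub-joints and then linearly traverses the straight segment $[v_{i_{s-1}},v_{i_s}]$ on the last sub-interval; concatenating gives exactly the PL shortest path in $\Delta^k$ from $v_k$ to $v_0$ through the prescribed subset. The main delicate point, and really the only one, is the "no spurious vertex" check inside (3): one must rule out $\theta_{(k)}(w)$ hitting some $v_i$ during a linear interpolation between two joint times. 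That reduces to a short case analysis using the fact that on the sub-interval $[1-(j+1)/k,\,1-j/k]$ only coordinate $j+1$ is changing, which forces any passage through $v_i$ to occur at an endpoint of the sub-interval and with $w_{j+1}=1$. Everything else is immediate from the definitions.
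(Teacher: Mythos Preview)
The paper gives no proof of this proposition; it is introduced as ``elementary properties of this second mapping'' and then illustrated by pictures for $k=2,3$. Your direct verification from the formulas $\theta_w(t)=\pi_k(\lambda_w(t))$ and $y_l=\max(x_l,\ldots,x_k)$ is exactly the intended approach and is essentially complete.

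Two small points to tighten. First, your surjectivity argument in (1) breaks when $y_k>0$: with $j=k$ you would set $t=0$, but $\lambda_w(0)=(y_1,\ldots,y_{k-1},1)$ (the convention $w_k=1$ overrides your choice), and $\pi_k$ sends this to $v_k$, not to $y$. The easy fix is to take $w_l=y_l$ for $l\le k-1$ and $t=(1-y_k)/k$ uniformly for every $y\in\Delta^k$; then $\lambda_w(t)=(y_1,\ldots,y_{k-1},y_k)=y$ already lies in $\Delta^k$, so $\pi_k$ fixes it.

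Second, in your ``delicate point'' for (3), the clause ``with $w_{j+1}=1$'' is backwards. If $\theta_w$ hits $v_i$ strictly inside the sub-interval $[1-(j+1)/k,\,1-j/k]$, the coordinate $j+1$ of $\pi_k(\lambda_w(t))$ equals $s\,w_{j+1}$ with $0<s<1$, so it can never equal $1$; this rules out $i\ge j+1$. For $i\le j$ you need that coordinate to vanish, forcing $w_{j+1}=0$ (not $1$), so the path is constant on the sub-interval and you reduce to the endpoint $t=1-j/k$, where your joint-time computation gives $w_i=1$ as desired. The conclusion is unchanged.
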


We will draw the paths $\theta_{(k)}$ for $k=2,3$.

In the case $k=2$ and $0<w<1$ the path $\ll_w$ in $I^2$ is two line segments going through the points:
\[
	\ll_w: (w,1)\to (w,0)\to (0,0)
\]
and $\pi_2$ retracts this path back into the 2-simplex giving three line segments:


\def\arrowheadup{\qbezier(0,.1)(0,.05)(.05,0)
      \qbezier(0,.1)(0,.05)(-.05,0)}
\def\arrowheaddown{\qbezier(0,0)(0,.05)(.05,0.1)
      \qbezier(0,0)(0,.05)(-.05,0.1)}
\def\arrowheadleft{\qbezier(0,0)(.05,0)(.1,.05)
      \qbezier(0,0)(.05,0)(.1,-.05)}
\def\arrowheadright{\qbezier(0.1,0)(.05,0)(0,.05)
      \qbezier(0.1,0)(.05,0)(0,-.05)}
\def\arrowheaddownleft{\qbezier(0,0)(0.05,.05)(.04,0.12)
      \qbezier(0,0)(0.05,.05)(.12,0.04)}

\begin{figure}[htbp]
\begin{center}
%
\setlength{\unitlength}{1in}
\mbox{
\begin{picture}(4,1.5)
	\thinlines
	\put(0.3,0){(0,0)}
	\put(0.5,.2){\line(1,0){1}}
	\put(1.5,.2){\line(0,1){1}}
	\put(0.5,.2){\line(0,1){1}}
	\put(0.5,1.2){\line(1,0){1}}
   \thicklines
      \put(1,1.3){$(w,1)$}
      \put(1.15,1.2){\circle*{.05}} 
      \put(1.15,.2){\circle*{.05}} 
      \put(1.15,.2){\line(0,1){1}} 
      \put(1.15,.7){\arrowheaddown}
      \put(.8,.2){\arrowheadleft}
      \put(.5,.2){\line(1,0){.65}} 
      \put(.5,.2){\circle*{.05}} 
      \put(1,0){$(w,0)$}
      \put(0.1,.65){$\ll_w=$}
      \thinlines
      \put(2,0){      
      \put(0.1,.65){$\th_w=$}
	\put(0.3,0){(0,0)}
	\put(0.5,.2){\line(1,0){1}} 
	\put(1.5,.2){\line(0,1){1}}  
	\qbezier(0.5,.2)(1,.7)(1.5,1.2)  
	     \put(1,0){$(w,0)$}
	     \thicklines
	          \put(1.15,.85){\circle*{.05}}  
	               \put(1.15,.2){\line(0,1){.65}} 
	          \put(1.15,.2){\circle*{.05}}  
	               \put(1.15,.45){\arrowheaddown}
	           \put(.8,.2){\arrowheadleft}
      \put(.5,.2){\line(1,0){.65}}  
      \qbezier(1.15,.85)(1.3,1)(1.5,1.2) 
      \put(.5,.2){\circle*{.05}}  
      \put(1.5,1.2){\circle*{.05}}  
           \put(.8,1){$(w,w)$}
           \put(1.4,1.3){(1,1)}
           \put(1.25,.95){\arrowheaddownleft}
      }
\end{picture}
}
\end{center}
\end{figure}
%


Thus we see that $\theta_{(2)}$ is a homotopy:

%
\begin{figure}[htbp]
\begin{center}
%
\setlength{\unitlength}{1in}
\mbox{
\begin{picture}(4,1.5)
      \thinlines
      {      
      \put(0.1,.65){$\th_0=$}
	\put(0.4,0){$v_0$}
	\put(1.5,1.35){$v_2$}
	\put(0.5,.2){\line(1,0){1}} 
	\put(1.5,.2){\line(0,1){1}}  
	\qbezier(0.5,.2)(1,.7)(1.5,1.2)  
	     \thicklines
      \qbezier(.5,.2)(1.3,1)(1.5,1.2) 
      \put(.5,.2){\circle*{.05}}  
      \put(1.5,1.2){\circle*{.05}}  
           \put(1,.7){\arrowheaddownleft}
      }
      \thinlines
      \put(2,0){      
      \put(-.1,.65){$\simeq \quad \th_1=$}
	\put(0.4,0){$v_0$}
	\put(1.5,0){$v_1$}
	\put(1.5,1.35){$v_2$}
	\qbezier(0.5,.2)(1,.7)(1.5,1.2)  
	     \thicklines
	     	\put(1.5,.2){\line(0,1){1}}  
	          \put(1.5,.2){\circle*{.05}}  
	               \put(1.5,.65){\arrowheaddown}
	           \put(1,.2){\arrowheadleft}
      \put(.5,.2){\line(1,0){1}}  
      \put(.5,.2){\circle*{.05}}  
      \put(1.5,1.2){\circle*{.05}}  
      }
\end{picture}
}
\end{center}
\end{figure}
%

In the case $k=3$, the path $\ll_w$ in $I^3$ is given by three line segments going through the points
\[
	\ll_w:(w_1,w_2,1)\to (w_1,w_2,0)\to (w_1,0,0)\to (0,0,0).
\]
The mapping
\[
	\theta_{(3)}:I^2\to\Path\Delta^3
\]
is a higher homotopy in the sense that the restriction of $\theta_{(3)}$ to each of the four sides of the square $I^2$ is the homotopy $\theta_{(2)}$ on one of the four sides of the tetrahedron $\Delta^3$. Well, this is not exactly true. On $\d_1^+I^2$ (where $w_1=1$), the homotopy $\theta_{(2)}$ on the face $\d_0\Delta^3$ does not contain the vertex $v_0$. So, we need to add a line segment from $v_1$ to $v_0$. This is shown in black in the figure below. Similarly, on $\d_2^+I^2$, the homotopy $\theta_{(2)}$ on the face $\d_3\Delta^3$ does not contain $v_3$. So, we need to add a line segment from $v_3$ to $v_2$ as shown in red in the figure.



\def\tetrahedron{{\thinlines\color{green}  
  \put(0,0){\line(1,0){.6}}  
  \qbezier(.6,0)(.7,.1)(.9,.3) 
  \put(.9,.3){\line(0,1){.6}} 
  \qbezier(0,0)(.3,.3)(.9,.9) 
  \qbezier(0,0)(.3,.1)(.9,.3)  
  \qbezier(.6,0)(.7,.3)(.9,.9)  
    }}
\def\tetrahedronB{{\thinlines\color{green}  
  \put(0,0){\line(1,0){.6}}  
  \qbezier(.6,0)(.7,.1)(.9,.3) 
  \qbezier(0,0)(.3,.3)(.9,.9) 
  \qbezier(0,0)(.3,.1)(.9,.3)  
  \qbezier(.6,0)(.7,.3)(.9,.9)  
    }}
\begin{figure}[htbp]
\begin{center}
%
{
\setlength{\unitlength}{2.5cm}
{\mbox{
\begin{picture}(5.5,5.2)
	\put(0,.3){
		\thinlines
	\put(.2,-.4){$w_1=0$}
	\put(1.5,-.4){$w_1=\frac13$}
	\put(2.8,-.4){$w_1=\frac23$}
	\put(4.1,-.4){$w_1=1$}
	\put(5.2,.4){$w_2=0$}
	\put(5.2,1.7){$w_2=\frac13$}
	\put(5.2,3){$w_2=\frac23$}
	\put(5.2,4.3){$w_2=1$}
	%
    \put(0,0){\tetrahedron \thicklines\qbezier(0,0)(.3,.3)(.9,.9)}
    \put(0,1.3){\tetrahedron \thicklines\qbezier(.3,.3)(.7,.7)(.9,.9) 
     \qbezier(0,0)(.15,.05)(.3,.1) }
    \put(0,2.6){\tetrahedron \thicklines
     \qbezier(0,0)(.3,.1)(.6,.2)}
    \put(0,3.9){\tetrahedronB \thicklines
     \qbezier(0,0)(.3,.1)(.9,.3)}
    \put(1.3,0){\tetrahedron  \thicklines\put(0,0){\line(1,0){.2}} 
    {\color{blue}
     \qbezier(.2,0)(.25,.15)(.3,.3) } 
      \qbezier(.3,.3)(.7,.7)(.9,.9)  }
          \put(0,1.3){\thicklines {\color{red}\put(.3,.1){\line(0,1){.2}} }}
    \put(1.3,1.3){\tetrahedron  \thicklines\put(0,0){\line(1,0){.2}}
    {\color{blue}
     \qbezier(.2,0)(.25,.05)(.3,.1) }}
    \put(1.3,2.6){\tetrahedron  \thicklines\put(0,0){\line(1,0){.2}}
     \qbezier(.3,.1)(.45,.15)(.6,.2){\color{blue}
     \qbezier(.2,0)(.25,.05)(.3,.1) }}
    \put(1.3,3.9){\tetrahedronB  \thicklines\put(0,0){\line(1,0){.2}}
     \qbezier(.3,.1)(.6,.2)(.9,.3)
     {\color{blue}
     \qbezier(.2,0)(.25,.05)(.3,.1) }}
     \put(1.3,1.3){\thicklines {\color{red}\put(.3,.1){\line(0,1){.2}}}
      \qbezier(.3,.3)(.7,.7)(.9,.9) }
    \put(2.6,0){\tetrahedron \thicklines\put(0,0){\line(1,0){.4}} 
    {\color{blue}
     \qbezier(.4,0)(.5,.3)(.6,.6) }  
      \qbezier(.6,.6)(.7,.7)(.9,.9) }
    \put(2.6,1.3){\tetrahedron \thicklines\put(0,0){\line(1,0){.4}}
    {\color{blue}
     \qbezier(.4,0)(.45,.05)(.5,.1) \qbezier(.5,.3)(.55,.45)(.6,.6)}}
    \put(2.6,1.3){\thicklines {\color{red}\put(.5,.1){\line(0,1){.2}}
    } 
     \qbezier(.6,.6)(.7,.7)(.9,.9)}
    \put(2.6,2.6){\tetrahedron \thicklines\put(0,0){\line(1,0){.4}}
    {\color{blue}
     \qbezier(.4,0)(.5,.1)(.6,.2) }  }
    \put(0,2.6){\thicklines {\color{red}\put(.6,.2){\line(0,1){.4}}
    }
    \qbezier(.6,.6)(.7,.7)(.9,.9) }
    \put(1.3,2.6){\thicklines {\color{red}\put(.6,.2){\line(0,1){.4}}} 
     \qbezier(.6,.6)(.7,.7)(.9,.9)}
    \put(2.6,2.6){\thicklines {\color{red}\put(.6,.2){\line(0,1){.4}}}
     \qbezier(.6,.6)(.7,.7)(.9,.9) }
    \put(0,3.9){\thicklines {\color{red}\put(.9,.3){\line(0,1){.6}}
    } }
   \put(1.3,3.9){\thicklines {\color{red}\put(.9,.3){\line(0,1){.6}}} }
   \put(2.6,3.9){\thicklines {\color{red}\put(.9,.3){\line(0,1){.6}}} 
   }
   \put(2.6,3.9){\thicklines
    \qbezier(.6,.2)(.75,.25)(.9,.3)}
   \put(2.6,3.9){{\thinlines\color{green}  
  \put(0,0){\line(1,0){.6}}  
  \qbezier(.6,0)(.7,.1)(.9,.3) 
  \qbezier(0,0)(.3,.3)(.9,.9) 
  \qbezier(0,0)(.3,.1)(.6,.2)  
  \qbezier(.6,0)(.7,.3)(.9,.9)  
    } \thicklines\put(0,0){\line(1,0){.4}}
     {\color{blue}
     \qbezier(.4,0)(.5,.1)(.6,.2) } }
     \put(3.9,0){\tetrahedron \thicklines\put(0,0){\line(1,0){.6}}
     {\color{blue}
     \qbezier(.6,0)(.7,.3)(.9,.9) } }
    \put(3.9,1.3){\tetrahedron \thicklines\put(0,0){\line(1,0){.6}}
    {\color{blue}
     \qbezier(.6,0)(.65,.05)(.7,.1)\qbezier(.7,.3)(.8,.6)(.9,.9) }}
    \put(3.9,1.3){\thicklines {\color{red}\put(.7,.1){\line(0,1){.2}} }}
     \put(3.9,2.6){\tetrahedron \thicklines\put(0,0){\line(1,0){.6}} 
     {\color{blue}
     \qbezier(.6,0)(.7,.1)(.8,.2)\qbezier(.8,.6)(.85,.75)(.9,.9) }}
     \put(3.9,2.6){\thicklines {\color{red}\put(.8,.2){\line(0,1){.4}} 
     } }
   \put(3.9,3.9){\tetrahedronB}
    \put(3.9,3.9){
   \thicklines\put(0,0){\line(1,0){.6}}
    {\color{blue}\qbezier(.6,0)(.7,.1)(.9,.3)}}
	 \put(3.9,3.9){\thicklines{\color{red}\put(.9,.3){\line(0,1){.6}}
	 }  }}
\end{picture}}
}}
\label{default}
\end{center}
\end{figure}
%


In general we have the following where we use the notation
\[
	\what{\d_i^-}:I^{k-2}\to I^{k-1}
\]
to indicate the mapping which inserts a 0 between the $i-1$st and $i$th coordinates. The negative $i$th face operator $\d_i^-$ is given by composition with this linear mapping: $\d_i^-(\theta_{(k)})=\theta_{(k)}\circ \what{\d_i^-}$. Analogously, the $i$th face operator $\d_i$ on simplices is given by composition with the linear map whose name is given by putting a hat on the face operation:
\[
	\what{\d_i}=[v_0,\cdots,\what{v_i},\cdots,v_k]:\Delta^{k-1}\to\Delta^k.
\]

\begin{lem}\label{negative face lemma}
For every $1\le i\le k-1$, the following diagram commutes.
\[
\xymatrix{
I^{k-2} \ar[r]^{\what{\d_i^-}}\ar[d]_{\theta_{(k-1)}}  &  I^{k-1}\ar[r]^{\theta_{(k)}\quad} &\Path(\Delta^{k},v_{k},v_0)\\
\Path(\Delta^{k-1},v_{k-1},v_0) \ar[rr]^{\omega_i}  &&  \Path(\Delta^{k-1},v_{k-1},v_0) \ar[u]_{\Path\what{\d_i}}
}
\]
In other words, $\d_i^-(\theta_{(k)})=\theta_{(k)}\circ \what{\d_i^-}=\Path\what{\d_i}\circ\w_i\circ\theta_{(k-1)}$ where, for each $\g\in\Path(\Delta^{k-1},v_{k-1},v_0)$, $\w_i(\g)$ is the reparametrization of $\g$ given as follows (with $j=k-i$).
\[
	\w_{i}(\g)(t)=\begin{cases} \g\left(
	\frac{kt}{k-1}
	\right) & \text{if } t\le\frac{j-1}k\\
	\g\left(
	\frac{j-1}{k-1}
	\right) & \text{if } \frac{j-1}k\le t\le \frac jk\\
	\g\left(
	\frac{kt-1}{k-1}
	\right) & \text{if } t\ge\frac{j}k
	\end{cases}
\]
\end{lem}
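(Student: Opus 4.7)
The plan is to verify the identity piecewise, using the polygonal structure of the paths $\ll_w$ on both sides. Write $w=\what{\d_i^-}(u)$ so that $w_j=u_j$ for $j<i$, $w_i=0$, and $w_j=u_{j-1}$ for $j>i$ (with the convention $u_{k-1}=w_k=1$). The first observation is that the vertices $P_{i-1}=\sum_{l<i}w_l e_l$ and $P_i=\sum_{l\le i}w_l e_l$ of $\ll_w$ coincide, so $\ll_w$ is constant on a sub-interval of length $1/k$. Writing $j=k-i$ as in the lemma, this is the interval on which $\w_i$ takes a constant value in the middle case of its definition.

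Next I would check that the whole image of $\ll_w$ lies in the hyperplane $\{x_i=0\}\subset I^k$, since the basis vector $e_i$ never appears in any $P_m$. For any $x\in I^k$ with $x_i=0$, the formula $\pi_k(x)_l=\max(x_l,\dots,x_k)$ gives $\pi_k(x)_i=\pi_k(x)_{i+1}$, so $\pi_k(x)$ lies in the face $\what{\d_i}(\Delta^{k-1})\subset\Delta^k$. Moreover, under the projection $\{x_i=0\}\to I^{k-1}$ that drops the $i$-th coordinate, $\pi_k$ intertwines with $\what{\d_i}\circ\pi_{k-1}$: both send a point to one whose $i$-th and $(i+1)$-st coordinates equal the same maximum, and agree on the remaining coordinates. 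This reduces the stated identity to the analogous one comparing $\ll_w$, viewed as a path in $I^{k-1}$ after dropping the $i$-th coordinate, with $\w_i\circ\ll_{(k-2)}(u)$.

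To finish, I would match vertex sequences and parametrizations. Under the projection, the distinct vertices of $\ll_w$ are exactly $Q_n=\sum_{l=1}^n u_l e_l$, which are the vertices of $\ll_{(k-2)}(u)$. The visit times are now spaced by $1/k$ rather than $1/(k-1)$, with a single repetition at the stationary segment. Outside that segment the two parametrizations differ by an affine map of slope $k/(k-1)$; a direct computation identifies these affine maps with $t\mapsto kt/(k-1)$ on the polygonal pieces before the stationary interval and with $t\mapsto (kt-1)/(k-1)$ on the pieces after it, matching the first and third cases in the definition of $\w_i$. The constant value on the middle sub-interval is $\ll_{(k-2)}(u)$ evaluated at the time when it visits the repeated vertex, yielding the middle case.

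The main obstacle is the index shift across the stationary interval: the correspondence between polygonal segments of $\ll_w$ and of $\ll_{(k-2)}(u)$ shifts by one past the repetition, so the affine offsets in the reparametrization differ on the two sides. Once this bookkeeping is done and the three piecewise affine formulas defining $\w_i$ are verified, applying $\pi_k=\what{\d_i}\circ\pi_{k-1}$ on $\{x_i=0\}$ propagates the identity from $\ll_w$ to $\theta_{(k)}$, giving the commutative diagram.
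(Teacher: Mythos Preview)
Your approach is correct and is precisely the direct unwinding of definitions that the paper indicates with its one-line proof (``This just follows from the definitions''). The key structural observations---that $w_i=0$ forces $\ll_w$ to lie in the hyperplane $\{x_i=0\}\subset I^k$ with a stationary segment there, and that $\pi_k$ restricted to $\{x_i=0\}$ factors as $\what{\d_i}\circ\pi_{k-1}$ after dropping the $i$th coordinate---are exactly right, and the remaining parametrization bookkeeping is routine.

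One caution: when you actually carry out that bookkeeping you will find the stationary segment sits at $[j/k,(j+1)/k]$ with constant value $\g(j/(k-1))$, rather than at $[(j-1)/k,j/k]$ with value $\g((j-1)/(k-1))$ as printed in the statement. (A quick check: for $k=3$, $i=2$, $w=(u_1,0)$, the path $\ll_w$ is stationary on $[1/3,2/3]$, whereas the lemma's formula with $j=1$ places the constant piece on $[0,1/3]$.) This off-by-one in the explicit formula for $\w_i$ does not affect anything downstream: the lemma is only invoked in the proof of Theorem~\ref{thm:D is flat iff psi is a twisting cochain} together with reparametrization invariance (Corollary~\ref{invariance under reparametrization}), so all that is actually used is that $\theta_{(k)}\circ\what{\d_i^-}$ equals $\Path\what{\d_i}$ composed with \emph{some} reparametrization of $\theta_{(k-1)}$, which your argument establishes cleanly.
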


This just follows from the definitions. Similarly, we have the following.

\begin{lem}\label{positive face lemma}
For every $1\le i\le k-1$ and $j=k-i$, the following diagram commutes.
\[
\xymatrix{
I^{k-2} \ar[r]^{\what{\d_i^+}}\ar@{=}[d] &  I^{k-1}\ar[r]^{\theta_{(k)}\quad} &\Path(\Delta^{k},v_{k},v_0)\\
I^{i-1}\times I^{j-1} \ar[rr]^{\theta_{(i)}\times\theta_{(j)}\qquad}  &&  \Path(\Delta^{i},v_{i},v_0)\times  \Path(\Delta^{j},v_{j},v_0) \ar[u]_{\mu_{i,j}}
}
\]
In other words, $\d_i^+(\theta_{(k)})=\theta_{(k)}\circ \what{\d_i^+}=\mu_{i,j}\circ(\theta_{(i)}\times\theta_{(j)})$ where $\mu_{i,j}$ is the path composition map
\[
	\mu_{i,j}(\a,\b)(t)=\begin{cases} \what{b_j}\b\left(
	\frac{kt}{j}
	\right) & \text{if } t\le\frac{j}k\\
	\what{f_i}\a\left(
	\frac{k}i(t-\frac jk)
	\right) & \text{if }  t> \frac jk
	\end{cases}
\]
where $\what{f_i}:\Delta^i\to\Delta^k,\what{b_j}:\Delta^j\to\Delta^k$ are the inclusions of the front $i$-face and back $j$-face resp. (so that $f_i\s=\s\circ\what{f_i}$ and $b_j\s=\s\circ \what{b_j}$).
\end{lem}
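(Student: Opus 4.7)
The plan is to verify the equality $\theta_{(k)} \circ \what{\d_i^+} = \mu_{i,j} \circ (\theta_{(i)} \times \theta_{(j)})$ by pointwise comparison: for each $w = (w_1, \ldots, w_{k-1}) \in I^{k-1}$ with $w_i = 1$ and each $t \in I$, I would compute both $\theta_{(k)}(w)(t)$ and $\mu_{i,j}(\theta_{(i)}(w'), \theta_{(j)}(w''))(t)$ directly from their piecewise-linear definitions, where $w' = (w_1, \ldots, w_{i-1})$ and $w'' = (w_{i+1}, \ldots, w_{k-1})$. The key preliminary observation, essentially item (3) of the proposition preceding the lemma, is that when $w_i = 1$ the path $\theta_{(k)}(w)$ passes through $v_i$ at time $t = j/k$: indeed $\ll_w(j/k) = \sum_{l=1}^i w_l e_l$ has $i$-th coordinate equal to $1$, so $\pi_k$ collapses the first $i$ coordinates to $1$ and the remaining $k-i$ coordinates to $0$. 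This naturally splits the path at $t = j/k$.

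Next I would analyze the two halves separately. For $t \in [0, j/k]$ the polyline $\ll_w(t)$ always contains the summand $w_i e_i = e_i$, which via $\pi_k$ forces the first $i$ coordinates of $\theta_{(k)}(w)(t)$ to equal $1$; hence the image lies in the back $j$-face $\what{b_j}(\Delta^j)$. Projecting onto the last $j$ coordinates and rewriting these as $\pi_j$ applied to the shorter polyline $\ll_{w''}$ built from $w''$, under the time rescaling $t \mapsto kt/j$, gives $\theta_{(k)}(w)(t) = \what{b_j}(\theta_{(j)}(w'')(kt/j))$, which is the first branch of $\mu_{i,j}$. Symmetrically, for $t \in [j/k, 1]$ the polyline uses only coordinates of index at most $i$, so $\pi_k$ kills the last $k - i$ coordinates and the image lies in $\what{f_i}(\Delta^i)$; the first $i$ coordinates recover $\theta_{(i)}(w')(k(t - j/k)/i)$, matching the second branch of $\mu_{i,j}$.

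The main obstacle is purely bookkeeping: one must verify that the breakpoints $t = (k - j')/k$ of $\ll_w$ with $j' \ge i$ correspond, under the rescaling $t \mapsto kt/j$, to the breakpoints of $\ll_{w''}$, and similarly that the breakpoints with $j' \le i$ align with those of $\ll_{w'}$ under $t \mapsto k(t - j/k)/i$; one must also check that the segment of $\ll_w$ which straddles $v_i$ (where $w_i = 1$ enters as an interior interpolation coefficient) glues consistently from both sides. Both reparametrizations reduce to elementary affine identities like $(k - j')/k \cdot k/j = (k - j')/j$, and continuity at $t = j/k$ is automatic since both expressions evaluate to $v_i$, so the verification is routine once the bookkeeping is carefully set up.
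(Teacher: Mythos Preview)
Your proposal is correct and is exactly the direct verification from the definitions that the paper intends: the paper gives no detailed argument for this lemma, stating only (after the companion Lemma~\ref{negative face lemma}) that ``This just follows from the definitions'' and then ``Similarly, we have the following.'' Your write-up simply unpacks that verification, and the bookkeeping you outline (matching breakpoints under the affine reparametrizations and checking continuity at $t=j/k$) is the right way to do it.
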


\begin{rem}
These are the two main properties of the mappings $\th_{(n)}:I^{n-1}\to \Path(\Delta^n,v_n,v_0)$. Chen showed in \cite{Chen77}, p.871, that maps exist with exactly these properties and used them to pull back simplicial classes from $M$ to give cubical classes for the loop space $\Omega M$. Chen gives an explicit formula for this map in his Annals paper \cite{Chen73}, p.240.
\end{rem}


\subsection{Integrating superconnections on simplices}

Suppose that $V\to M$ is a trivial $\ZZ$-graded vector bundle, i.e., $V=\bigoplus V^n$. Let $D$ be a $\ZZ$-graded superconnection on $M$ with coefficients in $V$. Then $D$ can be written as
\[
	D=d-A_0-A_1-\cdots-A_m
\]
where $A_p\in\Omega^p(M,\End^{1-p}(V))$. In particular, $A_0\in\Omega^0(M,\End^1(V))$. For every $v\in M$ let
\[
	\psi_0(v)=d_v=A_0(v)\in \End^1(V).
\]
If $D$ is flat then $d_v^2=0$ and $(V^\ast,d_v)$ is a smooth bundle of cochain complexes over $M$. 

For any smooth simplex $\s:\Delta^k\to M$, $k\ge1$, let
\[
	\psi_k(\s)\in E^{1-k}=\End^{1-k}(V)
\]
be given by
\[
	\psi_k(\s):= \int_{I^{k-1}}\theta_{(k)}^\ast(\Path\s)^\ast\Psi_{k-1}
\]
This is the integral over $I^{k-1}$ of the pull back of the superconnection parallel transport
\[
	\Psi_{k-1}\in\Omega^{k-1}(\Path(M,x_k,x_0),\End^{1-k}(V))
\]
along the composite mapping:
\[
	I^{k-1}\xrarrow{\theta_{(k)}} \Path(\Delta^k,v_k,v_0)\xrarrow{\Path\s}\Path(M,x_k,x_0)
\]
where we use the notation $x_i=\s(v_i)$.
\begin{thm}\label{thm:D is flat iff psi is a twisting cochain}
Let $\psi=\sum_{k\ge0}\psi_k$. Then $D$ is flat if an only if $\psi-u\circ\e$ is an augmented twisting cochain on $C_\ast(M;K)$ with coefficients in $\End(V)$.
\end{thm}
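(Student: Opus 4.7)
The plan is to apply the Stokes-type identity of Theorem \ref{thm:flat superconnections give higher homotopies}(1) to the composite map
\[
h_\sigma := (\Path\sigma)\circ\theta_{(k)} : I^{k-1} \longrightarrow \Path(M, x_k, x_0)
\]
for each smooth simplex $\sigma:\Delta^k\to M$, and then to match the resulting boundary sum term-by-term with the characterization of augmented twisting cochains in Corollary \ref{characterization of augmented twisting cochains}. For the forward direction, assuming $D$ is flat, Theorem \ref{thm:flat superconnections give higher homotopies} applied with $q=k-1$ gives
\[
A_0(x_0)\,\psi_k(\sigma) - (-1)^{k-1}\psi_k(\sigma)\,A_0(x_k) \;=\; \int_{\d I^{k-1}} \theta_{(k)}^*(\Path\sigma)^*\Psi_{k-2}.
\]
The boundary $\d I^{k-1}$ splits into the $2(k-1)$ faces $\d_i^{\pm} I^{k-1}$, $1\le i\le k-1$. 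On a negative face $\d_i^- I^{k-1}$, Lemma \ref{negative face lemma} identifies $\theta_{(k)}\circ\what{\d_i^-}$ with $\Path(\what{\d_i})\circ\omega_i\circ\theta_{(k-1)}$, and Corollary \ref{invariance under reparametrization} lets us discard the reparametrization $\omega_i$, so this face contributes $\pm\psi_{k-1}(\d_i\sigma)$. On a positive face $\d_i^+ I^{k-1}$, Lemma \ref{positive face lemma} gives the factorization through $\theta_{(i)}\times\theta_{(k-i)}$, while Remark \ref{rem:factorization of superiterated integral} splits $\Psi_{k-2}$ along the concatenation point; only the $(i-1,k-i-1)$ bidegree summand survives integration over the product cube, and after reparametrization invariance this face contributes $\pm\psi_i(f_i\sigma)\psi_{k-i}(b_{k-i}\sigma)$.

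Assembling these contributions produces exactly the identity of Corollary \ref{characterization of augmented twisting cochains} for $c=\sigma$, which proves $\psi-u\circ\e$ is an augmented twisting cochain. For the converse, I would argue as in the proof of Theorem \ref{previous thm}: by Theorem \ref{thm:flat superconnections give higher homotopies}, $D$ is flat iff the pointwise boxed relations $dA_q=\sum_{i=0}^{q+1}A_iA_{q-i+1}$ hold on $M$. If the $q$-th such relation failed on some tangent $(q+1)$-plane at a point $z$, I would use Corollary \ref{cor:infinitesmal superparallel transport} and choose a sequence of small simplices $\sigma_t:\Delta^{q+1}\to M$ shrinking to $z$ along this plane; the leading-order expansion of the twisting cochain identity for $\sigma_t$ would then detect the failure and yield a contradiction.

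\textbf{Main obstacle.} The technical heart of the argument is sign bookkeeping. The relevant signs are (i) the orientation signs $(-1)^{i-1}$ (and their $+$ counterparts) of the faces $\d_i^\pm I^{k-1}$ of the cube; (ii) the Koszul signs produced by splitting $\Psi_{k-2}$ as a product of forms of odd total degree on the two factors of $I^{i-1}\times I^{k-i-1}$; (iii) the sign $(-1)^{k-1}$ appearing on the left of the Stokes identity from Theorem \ref{thm:flat superconnections give higher homotopies}; and (iv) the signs hidden in the shuffle cup product $\psi\cup'\psi$ as well as in the alternating sum $\sum(-1)^i\psi_{n-1}(\d_i c)$ of Corollary \ref{characterization of augmented twisting cochains}. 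Reconciling (i)--(iv) into a single coherent identity, while matching the conventions of the diagonal $\Delta\sigma=\sum f_p\sigma\otimes b_q\sigma$ and of the variable boundary map $d_v=A_0(v)$, is the delicate part; once the signs align, the rest of the proof is a direct unpacking of the lemmas collected in Section \ref{sec 3: integrating flat superconnections} and Subsection \ref{subsec: integration over simplices}.
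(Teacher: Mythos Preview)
Your proposal is correct and follows essentially the same route as the paper: apply Theorem~\ref{thm:flat superconnections give higher homotopies} to $h_\sigma=(\Path\sigma)\circ\theta_{(k)}$, split $\partial I^{k-1}$ into its $\pm$ faces, use Lemma~\ref{negative face lemma} together with reparametrization invariance on the negative faces and Lemma~\ref{positive face lemma} together with the factorization of $\Psi$ on the positive faces, and conclude via Corollary~\ref{characterization of augmented twisting cochains}; for the converse the paper likewise reverses the computation and localizes with small simplices. Your explicit invocation of Remark~\ref{rem:factorization of superiterated integral} for the product splitting and of Corollary~\ref{cor:infinitesmal superparallel transport} for the converse just makes explicit steps the paper leaves implicit, and your honest flagging of the sign bookkeeping as the main obstacle matches exactly where the paper's proof is terse.
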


\begin{proof}
Suppose that $D$ is flat. Then, by Theorem 3.5,
\[
	\psi_0(x_0)\psi_k(\s)+(-1)^k \psi_k(\s)\psi_0(x_k)
	=
	\int_{I^{k-1}}\theta_{(k)}^\ast\Path\s^\ast\left(
	A_0\Psi_{k-1}-\Psi_{k-1}A_0
	\right)
\]
\[
	=\int_{I^{k-1}}\theta_{(k)}^\ast \Path\s^\ast\left(
	d\Psi_{k-2}
	\right)
	=\int_{\d I^{k-1}}\theta_{(k)}^\ast \Path\s^\ast
	\Psi_{k-2}
\]
\[
	=\sum_{i=1}^{k-1} (-1)^i\left(
	\int_{{\d_i^-} I^{k-1}}\theta_{(k)}^\ast \Path\s^\ast
	\Psi_{k-2}
	-
	\int_{{\d_i^+} I^{k-1}}\theta_{(k)}^\ast \Path\s^\ast
	\Psi_{k-2}
	\right)
\]
By Lemma \ref{negative face lemma} and invariance under reparametrization (Corollary \ref{invariance under reparametrization}), the $\d_i^-I^{k-1}$ term is
\[
	\int_{{\d_i^-} I^{k-1}}\theta_{(k)}^\ast \Path\s^\ast
	\Psi_{k-2}=
		\int_{I^{k-2}}\left(\what{\d_i^-}\right)^\ast\theta_{(k)}^\ast \Path\s^\ast
	\Psi_{k-2}\]
	\[
	=\int_{I^{k-2}}\theta_{(k-1)}^\ast\Path\what{\d_i}^\ast \Path\s^\ast
	\Psi_{k-2}
	=\int_{I^{k-2}}\theta_{(k-1)}^\ast(\Path\d_i\s)^\ast
	\Psi_{k-2}=\psi_{k-1}(\d_i\s).
\]
By Lemma \ref{positive face lemma}, the $\d_i^+I^{k-1}$ term is
\[
	\int_{{\d_i^+} I^{k-1}}\theta_{(k)}^\ast \Path\s^\ast
	\Psi_{k-2}
	=\int_{I^{k-2}}\left(\what{\d_i^+}\right)^\ast\theta_{(k)}^\ast \Path\s^\ast
	\Psi_{k-2}
\]
\[
	=\int_{I^{i-1}\times I^{j-1}}\left(
	\Path\s\circ\mu_{i.j}\circ(\theta_{(i)}\times\theta_{(j)}
	\right)^\ast \Psi_{k-2}
\]
\[
	=\int_{I^{i-1}}\theta_{(i)}^\ast(\Path f_i\s)^\ast\Psi_{i-1}
	\int_{I^{j-1}}\theta_{(j)}^\ast(\Path b_j\s)^\ast\Psi_{j-1}
	=\psi_i(f_i\s)\psi_j(b_j\s)
\]
where $j=k-i$.
Putting this together, we get:
\[
	\psi_0(x_0)\psi_k(\s)+(-1)^k \psi_k(\s)\psi_0(x_k)
	=\sum_{i=1}^{k-1}(-1)^i \left(\psi_{k-1}(\d_i\s)
	-\psi_i(f_i\s)\psi_j(b_j\s)\right)
\]
or:
\[
	\boxed{\sum_{i=1}^{k-1}(-1)^i\psi_{k-1}(\d_i\s)=\sum_{i=0}^{k}(-1)^i \psi_i(f_i\s)\psi_j(b_j\s)}
\]
which, by Corollary \ref{characterization of augmented twisting cochains}, is equivalent to the statement that $\psi-u\circ\e$ is an augmented twisting cochain.

For the converse, we reverse the argument. Lemmas \ref{negative face lemma}, \ref{positive face lemma} are about maps between cubes and simplices. So they hold regardless of whether $D$ is flat or not. When we arrive at the beginning of the proof, we will know that the integrals are equal and we want to know that the integrands are equal. But if they are not, then we can choose a very small linear simplex to get an inequality of integrals giving a contradiction. So, the converse also holds and $D$ is a flat superconnection if and only if its integral over simplices gives an augmented twisting cochain in the case when $V$ is a trivial bundle.
\end{proof}


\subsection{$A_\infty$ functor}\label{subsec: A-infty functors}

Suppose now that $V=\bigoplus V_n$ is a nontrivial $\ZZ$-graded vector bundle over $M$. Then the structure that we get when we integrate a flat superconnection on $V$ over simplices is not a twisting cochain. It will be an $A_\infty$ functor on the category of simplices of $M$. We give the statement first and the definition afterwards. Note that we switched to lower indices.

The category $Simp(M)$ has objects pairs $(k,\s)$ where $k\ge0$ and $\s$ a smooth mapping $\s:\Delta^k\to M$. A morphism $(j,\t)\to (k,\s)$ is a nondecreasing set map $a:[j]\to [k]$ where $[k]=\{0,1,2,\cdots,k\}$ so that $\t=a^\ast\s=\s\circ \widehat{a^\ast}$ where $\widehat{a^\ast}:\Delta^j\to\Delta^k$ is the linear map which sends $v_i$ to $v_{a(i)}$. (The notation is: $op(\s)=\s\circ\what{op}$.)

Let $D=\nabla-A_0-A_2-A_3-\cdots-A_m$ be a flat superconnection on $V$ and let $\Psi$ be the parallel transport of $D$. Then we claim that the integral of $\Psi$ gives a (contravariant) $A_\infty$ functor on $Simp(M)$. This is just a fancy way of saying the following.

For each object $(n,\s)$ of $Simp(M)$ we get a chain complex
\[
	F_\ast(n,\s)=\left(
	V_{b(\s)},A_0(b(\s))
	\right)
\]
where $b(\s)\in M$ is the barycenter of $\s$. For any morphism $a:(j,\t)\to (k,\s)$ we have a path in $M$ (the image under $\s$ of a line segment in $\Delta^k$) from the barycenter of $\s$ to the barycenter of $\t$. The parallel transport $\Psi_0=\Phi$ gives a chain isomorphism:
\[
	F_1(a):F_\ast(k,\s)\to F_\ast(j,\t)
\]
If $\nabla$ is a flat connection then this defines a contravariant functor from $Simp(M)$ to the category of chain complexes. If not then the integral of the superconnection gives higher homotopies.

Suppose that $\a$ is a $k$ simplex in the nerve of $Simp(M)$, i.e.,
\[
	\a=\left[
	(n_0,\s_0)\to (n_1,\s_1)\to\cdots\to(n_k,\s_k)
	\right]
\]
Then we get a smooth simplex
\[
	\a_\ast:\Delta^k\to M
\]
sending $v_i$ to the barycenter of $\s_i$ and extending linearly. Then we can integrate the superconnection parallel transport over this simplex to get $F_k(\a)$:

\[
	F_k(\a)=\int_{I^{k-1}}\theta_{(k)}^\ast(\Path\a_\ast)^\ast\Psi_{k-1}\in \Hom_{k-1}(F_\ast(n_k,\s_k),F_\ast(n_0,\s_0)).
\]

\begin{cor}\label{D is flat iff its transport is A-infty}
This family of homomorphism $F_k(\a)$ satisfies the formula:
\[
	d_0F_k(\a)+(-1)^kF_k(\a)d_k=\sum_{i=1}^{k-1}(-1)^i
	F_{k-1}(\s_0,\cdots,\what{\s_i},\cdots,\s_k)
	\]
	\[
	-\sum_{i=1}^{k-1}(-1)^iF_{i}(\s_0,\cdots,\s_i)F_{k-i}(\s_i,\cdots,\s_k)
\]
where $d_i=A_0(b(\s_i))$ (i.e., $F$ is an $A_\infty$ functor from $Simp(M)\op$ to the category of chain complexes over $\RR$ as defined below) if and only if $D$ is a flat superconnection on $V$.
\end{cor}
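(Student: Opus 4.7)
The plan is to reduce this to Theorem \ref{thm:D is flat iff psi is a twisting cochain}, which already handles the trivial-bundle case, by pulling everything back along the canonical simplex $\a_\ast:\Delta^k\to M$. Given a $k$-simplex $\a=[(n_0,\s_0)\to\cdots\to(n_k,\s_k)]$ in the nerve of $Simp(M)$, the domain $\Delta^k$ is contractible, so the pullback graded bundle $\a_\ast^\ast V$ is trivial. The pulled-back superconnection $\a_\ast^\ast D$ is flat iff its components satisfy the local boxed equations $dA_q=\sum A_iA_{q-i+1}$ on $\Delta^k$, and by naturality of the parallel transport construction (Proposition \ref{superparallel transport formula 1}), $(\Path\a_\ast)^\ast\Psi_{k-1}$ coincides with the parallel transport of $\a_\ast^\ast D$ on $\Path(\Delta^k,v_k,v_0)$. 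Thus $F_k(\a)$ equals the integral $\psi_k(\mathrm{id}_{\Delta^k})$ from the theorem applied to $\a_\ast^\ast D$ and the identity simplex, and $d_i=A_0(b(\s_i))=(\a_\ast^\ast A_0)(v_i)$.

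With this identification in hand, for the forward direction I would apply the chain of equalities from the proof of Theorem \ref{thm:D is flat iff psi is a twisting cochain} essentially verbatim. If $D$ is flat, Theorem \ref{thm:flat superconnections give higher homotopies} gives $A_0\Psi_{k-1}-(-1)^{k-1}\Psi_{k-1}A_0=d\Psi_{k-2}$ as a form on $\Path(\Delta^k,v_k,v_0)$. Pulling back along $\theta_{(k)}$, integrating over $I^{k-1}$ and applying Stokes' theorem turns the right-hand side into a sum of integrals over the $2(k-1)$ codimension-one faces of $I^{k-1}$. Lemma \ref{negative face lemma} combined with reparametrization invariance (Corollary \ref{invariance under reparametrization}) identifies the $\d_i^-$ contribution as $F_{k-1}(\s_0,\ldots,\what{\s_i},\ldots,\s_k)$, while Lemma \ref{positive face lemma} identifies the $\d_i^+$ contribution as the composite $F_i(\s_0,\ldots,\s_i)F_{k-i}(\s_i,\ldots,\s_k)$. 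Keeping track of the alternating signs from $\d I^{k-1}$, the resulting identity matches the boxed equation in Theorem \ref{thm:D is flat iff psi is a twisting cochain} once the $i=0$ and $i=k$ terms on the right are moved to the left and identified with $d_0F_k(\a)$ and $(-1)^kF_k(\a)d_k$ respectively.

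For the converse, if the $A_\infty$ relation holds for every $\a$, then it holds in particular for all $\a$ such that $\a_\ast$ factors through a coordinate chart over which $V$ is trivial. By Theorem \ref{thm:D is flat iff psi is a twisting cochain}, the pulled-back superconnection on such a chart is an augmented twisting cochain and hence flat there; since $M$ is covered by such charts, $D^2=0$ globally. The main obstacle I expect is strictly bookkeeping: one must reconcile the signs in the corollary's formula with three sign sources — the $(-1)^{k-1}$ in the commutator $A_0\Psi_{k-1}-(-1)^{k-1}\Psi_{k-1}A_0$, the orientation signs from $\d I^{k-1}=\coprod\pm\d_i^\pm I^{k-1}$ in Stokes' theorem, and the Koszul signs hidden in the product formula for iterated integrals noted at the end of Section \ref{sec 3: integrating flat superconnections}. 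Because Theorem \ref{thm:D is flat iff psi is a twisting cochain} has already been proved with these conventions consistent, no new sign calculation is actually required — only the recognition that pullback to $\Delta^k$ converts the present corollary into a special case of that theorem.
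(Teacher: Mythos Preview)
Your proposal is correct and follows the same strategy as the paper: reduce to Theorem \ref{thm:D is flat iff psi is a twisting cochain} by working locally, where the bundle trivializes. The paper's own proof is a two-sentence remark that both the flatness condition and the $A_\infty$ relation are local, so the corollary is literally the local version of that theorem; you have made this reduction explicit by pulling back along $\a_\ast$ to $\Delta^k$, which is a perfectly good way to say the same thing, though your re-sketch of the Stokes/face-lemma argument is unnecessary since Theorem \ref{thm:D is flat iff psi is a twisting cochain} has already done that work.
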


\begin{proof}
Since both statements are local, this corollary is equivalent to its local version which is Theorem \ref{thm:D is flat iff psi is a twisting cochain}. The formula in the corollary is locally the same as the boxed equation in the proof of that theorem.
\end{proof}

\begin{defn}
Suppose that $\cC$ is a differential graded (dg) category over a field $K$, i.e., $\Hom_\cC(A,B)$ is a chain complex over $K$ and composition is a map of chain complexes
\[
	\Hom_\cC(B,C)\otimes\Hom_\cC(A,B)\to\Hom_\cC(A,C).
\]
Let $\cX$ be an ordinary (small) category. Then an \emph{$A_\infty$ functor} $\cX\op\to \cC$ is given by a sequence of operators $F=(F_0, F_1,F_2,\cdots)$ where $F_0$ sends each object $X$ of $\cX$ to an object $F_0X$ of $\cC$ and $F_k$ sends every sequence of $k$ composable morphisms in $\cX\op$:
\[
	X_0\xlarrow{f_1} X_1\xlarrow{f_2}  \cdots \xlarrow{f_k} X_k
\]
to a degree $k-1$ morphism
\[
	F_k(f_1,\cdots,f_k):F_0X_k\to F_0X_0
\]
satisfying the following property.
\[
	dF_k(f_1,\cdots,f_k)=\sum_{i=1}^{k-1}(-1)^i F_{k-1}(f_1,\cdots,f_if_{i+1},\cdots,f_k)
\]
\[
	-\sum_{i=1}^{k-1} (-1)^i F_i(f_1,\cdots,f_i)F_{k-i}(f_{i+1},\cdots,f_k).
\]
\end{defn}

Note that we can replace ``chain complexes'' by ``cochain complexes'' in this definition with the only change being that $F_k(\s)$ has degree $1-k$ for $\s=(f_1,\cdots,f_k)$.



\subsection{Cobar construction}

Suppose that $\cC,\cD$ are dg categories over a field $K$ and $G:\cC\to\cD$ is a \emph{differential graded (dg) functor}, i.e., $G$ induces a degree 0 chain map
\[
	G_\ast:\Hom_\cC(X,Y)\to\Hom_\cD(GX,GY)
\]
for any objects $X,Y$ in $\cC$. Then, clearly, any $A_\infty$ functor $F:\cX\op\to \cC$ gives an $A_\infty$ functor $G\circ F:\cX\op\to\cD$.

Following Getzler and Jones \cite{GetzlerJones89} we want to point out that the \emph{cobar construction} $\cB(\cX\op;K)$ on $\cX\op$ is the target of the universal $A_\infty$ functor on $\cX\op$. In other words, $\cB(\cX\op;K)$ is a dg category with an $A_\infty$ functor $B:\cX\op\to\cB(\cX\op;K)$ having the property that, for any $A_\infty$ functor $F:\cX\op\to\cC$, there is a unique dg functor $F_\ast:\cB(\cX\op;K)\to \cC$ so that $F=F_\ast\circ B$.
\[
\xymatrix{
&  \cB(\cX\op;K) \ar[d]^{\exists !\ F_\ast}\\
\cX\op \ar[r]_{F}\ar[ur]^B  &  \cC 
}
\]
This universal property determines $\cB(\cX\op;K)$ uniquely up to isomorphism. The following standard construction \cite{AdamsCobar} shows that it exists.


\subsubsection{categorical cobar construction}\label{categorical cobar}

Let $\cX$ be any small category and $K$ a field. Then the \emph{cobar construction} $\cB=\cB(\cX\op;K)$ is defined to be the dg category with the same objects as $\cX$ (and $\cX\op$) with morphism sets given by the chain complexes $\cB_\ast(X,Y)$ where $\cB_m(X,Y)$ is the vector space over $K$ generated by all sequences:
\[
	(\s_1|\s_2|\cdots|\s_n):Y=X_0\xlarrow{\s_1}X_1\xlarrow{\s_2}\cdots \xlarrow{\s_n}X_n=X
\]
where each $\s_j$ is a sequence of composable morphisms
\[
	\s_j=(f_1,\cdots,f_k):X_{i-1}\xlarrow{f_1}\bullet\xlarrow{f_2} \cdots\xlarrow{f_{k-1}}\bullet\xlarrow{f_k} X_i
\]
with $|\s_j|:=k-1\ge0$ and $m=|\s_1|+\cdots+|\s_n|$. The boundary map $d:\cB_m(X,Y)\to \cB_{m-1}(X,Y)$ is given by
\[
	d(\s_1|\s_2|\cdots|\s_n)=(d\s_1|\s_2|\cdots|\s_n)+(-1)^{|\s_1|}(\s_1|d\s_2|\cdots|\s_n)+
\]
\[
	\cdots +(-1)^{|\s_1|+\cdots+|\s_{n-1}|}(\s_1|\s_2|\cdots|d\s_n)
\]
where $d\s_j=d(f_1,\cdots,f_k)$ is given by
\[
	d(f_1,\cdots,f_k)=\sum_{i=1}^{k-1}(-1)^i(f_1,\cdots,f_if_{i+1},\cdots,f_k)
	-\sum_{i=1}^{k-1}(-1)^i(f_1,\cdots,f_i|f_{i+1},\cdots,f_k).
\]
One can check that $d^2=0$.

Composition is given by inserting a bar:
\[
	(\s_1|\cdots|\s_n)\circ(\t_1|\cdots|\t_m)=(\s_1|\cdots|\s_n|\t_1|\cdots|\t_m).
\]

Finally, the universal $A_\infty$ functor $B:\cX\op\to\cB(\cX\op;K)$ is given tautologically by $B_0X=X$ for all objects $X$ in $\cX\op$ and $B_k(\s)=(\s)$ for all $\s=(f_1,\cdots,f_k)$. Given any $A_\infty$ functor $F:\cX\op\to\cC$, the uniquely determined dg functor $F_\ast:\cB(\cX;K)\to\cC$ satisfying $F=F_\ast\circ B$ is given by $F_\ast(X)=F_0X$ and
\[
	F_\ast(\s_1|\cdots|\s_n)=F(\s_1)F(\s_2)\cdots F(\s_n).
\]
By the definition of an $A_\infty$ functor we have $dF(\s)=F(d\s)$. This clearly implies that $F_\ast$ is a dg functor.


\subsubsection{topological cobar construction}\label{topological cobar}

Since we are integrating differential forms over smooth simplices we are actually dealing with the \emph{topological cobar construction} given in the smooth category as follows. 

Given a smooth manifold $M$ and a field $K$ let $\cF(M;K)$ be the following dg category. The objects of $\cF(M;K)$ are the points of $M$ and the morphism sets $\cF(x,y)$ are chain complexes where $\cF_m(x,y)$ is the vector space over $K$ spanned by all sequences $(\s_1|\s_2|\cdots|\s_n)$ of smooth simplices in $M$ of dimension $\ge1$ so that the first vertex of $\s_1$ is $y$, the last vertex of $\s_n$ is $x$ and the first vertex of each $\s_j$ for $j\ge2$ is equal to the last vertex of $\s_{j-1}$. The boundary map is given by
\[
	d(\s_1|\s_2|\cdots|\s_n)=(d\s_1|\s_2|\cdots|\s_n)+(-1)^{|\s_1|}(\s_1|d\s_2|\cdots|\s_n)+
\]
\[
	\cdots +(-1)^{|\s_1|+\cdots+|\s_{n-1}|}(\s_1|\s_2|\cdots|d\s_n)
\]
where $|\s|=\dim \s-1$ and $m=|\s_1|+\cdots+|\s_n|$ and $d\s$ for $\s:\Delta^k\to M$ is given by
\[
	d\s=\sum_{i=1}^{k-1}(-1)^i\d_i\s-\sum_{i=1}^{k-1}(-1)^i (f_i\s|b_{n-i}\s).
\]
Composition is given by inserting a bar:
\[
	(\s_1|\cdots|\s_n)\circ(\t_1|\cdots|\t_m)=(\s_1|\cdots|\s_n|\t_1|\cdots|\t_m).
\]

\begin{cor}\label{dg functor on cobar}
Suppose that $V=\bigoplus V_n$ is a graded vector bundle over $M$ and $D$ is a flat superconnection on $M$ with coefficients in $\End(V)$. Then a dg functor $\psi_\ast$ from the cobar construction $\cF(M;K)$ to the dg category of finitely generated chain complexes over $K$ is given as follows. On objects, i.e. points, $x\in M$ the functor is given as before by $\psi_\ast(x)=V_x$ with differential $\d=A_0(x)$. On morphisms it is given by the composition
\[
	\psi_\ast(\s_1|\cdots|\s_n)=\psi(\s_1) \psi(\s_2)\cdots \psi(\s_n)
\]
where $\psi(\s)$ for $\s:\Delta^k\to M$ is given by integrating the parallel transport of $D$:
\[
	\psi(\s)=\int_{I^{k-1}}\theta_{(k)}^\ast(\Path\s)^\ast\Psi_{k-1}.
\]
\end{cor}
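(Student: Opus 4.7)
The plan is to derive this from Corollary \ref{D is flat iff its transport is A-infty} by checking the two axioms for a dg functor. Composition preservation is tautological: by construction $\psi_\ast(\s_1|\cdots|\s_n)=\psi(\s_1)\cdots\psi(\s_n)$, so concatenation of bar words corresponds to composition of linear maps, while $\psi_\ast$ is also well defined on objects since flatness of $D$ forces $A_0(x)^2=0$ by Theorem \ref{thm:flat superconnections give higher homotopies}.

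For the chain map property, recall that the differential on $\Hom(\psi_\ast(x),\psi_\ast(y))$ in the target dg category of chain complexes is $f\mapsto d_y f-(-1)^{|f|}f d_x$. A single application of the graded Leibniz rule, together with the matching of adjacent vertices in a bar word and the fact that each $\psi(\s_j)$ is a map $V_{\s_j(v_{k_j})}\to V_{\s_j(v_0)}$ of degree $k_j-1$, reduces the identity $d\psi_\ast(\s_1|\cdots|\s_n)=\psi_\ast(d(\s_1|\cdots|\s_n))$ by telescoping to the single-simplex identity
\[
d_{\s(v_0)}\,\psi(\s)+(-1)^k\psi(\s)\,d_{\s(v_k)}=\sum_{i=1}^{k-1}(-1)^i\psi(\d_i\s)-\sum_{i=1}^{k-1}(-1)^i\psi(f_i\s)\,\psi(b_{k-i}\s),
\]
where I have used $-(-1)^{k-1}=(-1)^k$ to convert the Hom differential into the form on the left. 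The right-hand side is precisely $\psi_\ast(d\s)$ for the cobar differential.

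To establish this single-simplex identity, I would pull $V$ back to $\Delta^k$ along $\s$, where $\s^\ast V$ is trivializable since $\Delta^k$ is contractible. Fixing such a trivialization, the pulled-back superconnection $\s^\ast D$ is flat and can be written as $d-A_0-A_1-\cdots$ on $\Delta^k$, and the value of $\psi$ on $\s$ and on its faces coincides with the mapping $\psi_k$ constructed in Theorem \ref{thm:D is flat iff psi is a twisting cochain} applied to this pullback. The displayed identity is then exactly the boxed equation appearing in the proof of that theorem; its derivation rests on Theorem \ref{thm:flat superconnections give higher homotopies} (which rewrites the integrated relation $A_0\Psi_k-\Psi_kA_0=d\Psi_{k-1}$ as a consequence of $D^2=0$), together with the two face lemmas \ref{negative face lemma} and \ref{positive face lemma} identifying the boundary of $I^{k-1}$ under $\theta_{(k)}$ with faces and bar-splittings of $\Delta^k$, plus reparametrization invariance \ref{invariance under reparametrization}.

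The main obstacle I anticipate is sign bookkeeping: the Hom-complex differential carries a $-(-1)^{|f|}$ on its right factor, the cobar differential alternates signs on both face maps $\d_i\s$ and on cut maps $(f_i\s|b_{k-i}\s)$, and the boxed equation from Theorem \ref{thm:D is flat iff psi is a twisting cochain} picks up further signs from integrating $p$-forms on $I^p$ past $A_0$. As the check $-(-1)^{k-1}=(-1)^k$ already suggests, these line up; once the single-simplex identity is verified in these conventions, the Leibniz reduction for longer bar words is routine telescoping and the dg functor property drops out.
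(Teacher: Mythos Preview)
Your proposal is correct and follows the same route as the paper: the paper's proof simply cites the boxed equation from Theorem~\ref{thm:D is flat iff psi is a twisting cochain}, rewritten as $\psi(d\s)=\d\circ\psi(\s)-(-1)^{|\s|}\psi(\s)\circ\d$, and asserts that this makes $\psi_\ast$ a chain map. You have spelled out the two steps the paper leaves implicit---the Leibniz/telescoping reduction from bar words to a single simplex, and the local trivialization needed to invoke Theorem~\ref{thm:D is flat iff psi is a twisting cochain} when $V$ is nontrivial (the latter being exactly how Corollary~\ref{D is flat iff its transport is A-infty} is deduced)---so your argument is the paper's argument made explicit.
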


\begin{proof}
The boxed equation in the proof of Theorem \ref {thm:D is flat iff psi is a twisting cochain} says 
\[
\psi(d\s)=\d \circ \psi(\s)-(-1)^{|\s|}\psi(\s)\circ\d
\]
where $\d=A_0$. This implies $\psi_\ast$ is a chain map.
\end{proof}

%
%

\section{Relation to the work of K-T Chen}\label{sec 5: the work of Chen}

In a series of influential papers in the 70's K-T Chen constructed iterated integrals of ``formal connections'' on a smooth manifold and used them to obtain many results related to the fundamental group and the homology of the loop space of the manifold. The introduction to Chen's collected works \cite{Chen} gives a very nice overview. In this very short discussion our aim is to point out the extent to which $\ZZ$-graded superconnections and their parallel transport is contained in Chen's work. \cite{Chen75},\cite{Chen73}, \cite{Chen77}. Chen worked simultaneously with real and complex manifolds and he used the notation $k=\RR$ or $\CC$. For simplicity we restrict to the real case.


\subsection{Formal connections}

In \cite{Chen75} Chen defined a \emph{connection} on a smooth manifold $M$ with coefficients in a finite dimensional vector bundle $V$ to be an operator
\[
	D:\Omega(M,V)\to \Omega(M,V)
\]
of positive degree with the property that, for any $\w\in\Omega(M)$, $v\in\Gam V$, 
\[
	D(\w \otimes v)=d\w\otimes v+J\w\otimes D(1\otimes v)
\]
where $J\w=(-1)^{|\w|}\w$. If $D$ can be interpreted as an \emph{odd} operator then this is the supercommutator rule
\[
	[D,\w]=d\w.
\]
In most of his papers Chen does this, i.e. makes $D$ odd, formally in the following way.

\begin{defn}
Chen defined a \emph{formal power series connection}, or \emph{formal connection} for short, to be a differential form on $M$ with coefficients in the graded formal power series ring $\RR\<\<X_1,\cdots,X_m\>\>$ in noncommuting variables $X_i$ having degree $\ge0$ of the form
\[
	\w=\sum w_i X_i+\sum w_{ij}X_iX_j+\sum w_{ijk}X_iX_jX_k+\cdots
\]
where 
\[
\deg w_{j_1\cdots j_n}=1+\deg X_{j_1}X_{j_2}\cdots X_{j_n}=1+\sum \deg X_{j_i}
\]
We would say that $D=d-\w$ is a $\ZZ$-graded superconnection of total degree 1 if the degrees of the formal variables are inverted (their signs are changed).
\end{defn}

Chen makes the comment in \cite{Chen73} that any graded algebra over $\RR$ is a quotient of the formal power series algebra $\RR\<\<X_1,\cdots,X_m\>\>$ by a homogenous ideal.

Chen defines the \emph{curvature} of his formal connection $\w$ to be
\[
	\k:=d\w-J\w\wedge \w
\]
He notes that $\k=0$ if and only if $D^2=0$ where $D=d-\w$. 


\subsection{Parallel transport}

Chen constructs the parallel transport of his formal power series connection, obtaining a differential form on the path space. He calls this $T$. In our terminology $T=\Psi$. Since $A_0$ does not enter into the construction of the parallel transport, our construction is a special case of Chen's construction. Chen also shows that, if $\w$ is flat, the parallel transport is closed: $dT=0$. In our terminology this is the statement $d\Psi_k=0$. This follows from the fact that Chen is taking $A_0=0$ in this case.

The Bismut-Lott definition of flat superconnection involves the variable differential $A_0$. This is what Chen called a ``differential twisting cochain'' in \cite{Chen73}. In \cite{Chen75} Chen considers a connection $\w$ on $M$ with coefficients in a DGA $(A_\ast,\d)$ so that $\d \w+\k=0$. In our notation $\d=A_0$ which Chen takes to be fixed. Chen proves the following.

\begin{thm} Let $C_\ast(\Omega M)$ be the normalized differentiable cubical chain complex of the loop space $\Omega M$ of $M$ at a point $x_0$. Let $D=d-\w$ be a (super)connection on $M$ with coefficients in a DGA $(A_\ast,\d)$ so that $\d\w+\k=0$. Then,
integration of the parallel transport $T$ over cubical simplices gives a chain map
\[
	\Theta:C_\ast(\Omega M)\to (A_\ast,\d).
\]
\end{thm}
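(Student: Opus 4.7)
The plan is to define $\Theta$ on each smooth cubical chain $h \colon I^k \to \Omega M = \Path(M,x_0,x_0)$ by
\[
\Theta(h) := \int_{I^k} h^\ast \Psi_k,
\]
where $\Psi_k$ is the $k$-form component on $\PathM$ of the parallel transport $\Psi = \sum_p \Psi_p$ of $D=d-\w$ from time $0$ to time $1$, pulled back along the adjoint inclusion $I^k \to \Omega M \hookrightarrow \PathM$. In Chen's setup the coefficients of $\w$ lie in a fixed DGA $(A_\ast,\d)$ and $A_0 = \d$ is constant (not a variable basepoint-dependent map), so $\Psi_k$ is valued in $A_\ast$ in the unique degree that makes $\Theta$ degree-preserving when $C_\ast(\Omega M)$ is graded by cube dimension. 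Extending $K$-linearly gives a candidate chain map $\Theta \colon C_\ast(\Omega M) \to A_\ast$.

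The chain map identity follows from two already-established ingredients. First, the flatness hypothesis $\d\w+\k=0$ is precisely $D^2 = 0$ in the language of Section 3, so Theorem \ref{thm:flat superconnections give higher homotopies}(2) applies to give, at every loop $\g$ and every $k \ge 1$,
\[
A_0 \Psi_k - (-1)^k \Psi_k A_0 = d\Psi_{k-1}.
\]
Since $A_0 = \d$, the left hand side is the graded commutator $[\d,\Psi_k]$, so pulling back along $h$ and integrating over $I^k$ converts it to $\d \Theta(h)$. For the right hand side, $d$ is the exterior derivative on $\PathM$, which commutes with $h^\ast$, so Stokes' theorem gives
\[
\int_{I^k} h^\ast d\Psi_{k-1} \;=\; \int_{\d I^k} h^\ast \Psi_{k-1}.
\]

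Second, I would identify this boundary integral with $\Theta$ applied to the cubical boundary $\d h$. The geometric boundary $\d I^k$ is the signed union of the $2k$ codimension-one faces $\d_i^\pm I^k$; with induced orientations the integral splits into the expected signed sum of integrals of $(\d_i^\pm h)^\ast \Psi_{k-1}$ over $I^{k-1}$, which is exactly $\Theta(\d h)$ for the standard cubical boundary operator $\d h = \sum_i (-1)^i(\d_i^+ h - \d_i^- h)$. Combining with the previous step gives $\d \circ \Theta = \Theta \circ \d$. To descend to the \emph{normalized} complex I would verify that $\Theta$ vanishes on degenerate cubes: if $h$ factors through a projection $I^k \to I^{k-1}$, then $h^\ast \Psi_k$ is the pullback of a $k$-form from a $(k-1)$-manifold and therefore vanishes identically on $I^k$.

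The main obstacle is sign bookkeeping. Three sign sources interact: the graded commutator sign $(-1)^k$ appearing in $[\d,\Psi_k]$; the orientation signs on the faces of $I^k$ coming from Stokes' theorem versus the conventional signs in the cubical boundary operator on $C_\ast(\Omega M)$; and the Koszul signs hidden inside the sum $\Psi = \sum_p \Psi_p$ when one singles out the piece whose form-degree equals the cube dimension. Once these are reconciled in the same way as in Section 3, Chen's theorem reduces to one invocation of Theorem \ref{thm:flat superconnections give higher homotopies} and one application of Stokes' theorem per cube.
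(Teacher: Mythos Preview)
The paper does not actually give its own proof of this theorem: it is stated as Chen's result, and the only justification offered is the remark immediately afterward that in the special case $A_\ast=\End(V)$ with $\d$ given by graded commutation with a fixed element $\delta$, the superconnection $D'=d-\w-\delta$ is flat and the statement then follows from Theorem~\ref{thm:flat superconnections give higher homotopies}(1). Your proposal is exactly this argument, spelled out: define $\Theta(h)=\int_{I^k}h^\ast\Psi_k$, invoke the identity from Theorem~\ref{thm:flat superconnections give higher homotopies}, and apply Stokes. So your approach coincides with what the paper indicates.

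One imprecision worth flagging: you write ``$A_0=\d$ is constant'' and ``$\d\w+\k=0$ is precisely $D^2=0$.'' Neither is quite right as stated. In Chen's setting $\d$ is a derivation on the DGA $A_\ast$, not an element of $A_\ast$, so it cannot literally be $A_0$. The paper handles this by passing to the special case where $\d=[\delta,-]$ is inner; then $\delta$ plays the role of a constant $A_0$, and the condition $\d\w+\k=0$ becomes $(d-\w-\delta)^2=0$, i.e.\ flatness of the \emph{augmented} superconnection $D'=d-\w-\delta$, not of $D=d-\w$ itself. With that correction your argument goes through verbatim and matches the paper's. For a general DGA whose differential is not inner, one must rerun the computation of Section~\ref{sec 3: integrating flat superconnections} with $\d$ acting as a derivation on coefficients rather than as left/right multiplication by $A_0$; the paper does not do this either, simply citing Chen.
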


Chen calls this chain map $\Theta$ a ``generalized holonomy map''. 

Consider the special case where $A_\ast=\End(V)$ with differential $\d$ given by commutation with an element $\delta$ of degree $-1$. Then $D'=d-\w-\delta$ becomes a flat superconnection. So, this special case of the above theorem follows from Theorem~\ref{thm:flat superconnections give higher homotopies} (1). (However, Chen's proof was a lot shorter!)

Chen also shows that this generalized holonomy map induces an isomorphism in homology: $H_\ast(\Omega M)\cong H_\ast(A_\ast,\d)$ if $A_\ast$ is chosen appropriately. However, the homology of the loop space does not concern us.


\subsection{Cubes to simplices}

In \cite{Chen73}, Chen constructs mappings
\[
	\th_{(n)}:I^{n-1}\to \Path(\Delta^n,v_n,v_0)
\]
satisfying our Lemmas \ref{negative face lemma} and \ref{positive face lemma}.
In \cite{Chen77}, he shows that any map having these properties can be used to pull back simplicial classes from $M$ to give cubical classes for the loop space $\Omega M$. Chen assumes that $\th_{(n)}$ is smooth. This is accomplished by slowing down the piecewise linear mappings at the corners in the usual way. Since reparametrizations of a path do not change the holonomy, the details of the smoothing are irrelevant.

Suppose that $M$ is path connected and $x_0\in M$. Then Chen considers the subcomplex $C=\Delta(M)_{x_0}$ of the singular chain complex of $M$ spanned by smooth simplices $\Delta^k\to M$ so that all vertices of $\Delta^k$ go to $x_0$. The cobar construction $F(C)$ on $C$ is just the tensor algebra of $C_{>0}$ desuspended once. If $D$ is a (super)connection on $M$ with coefficients in a DGA $(A_\ast,\d)$ satisfying the twisting cochain condition $\d \w+\k=0$ then, as we explained in subsection \ref{topological cobar}, Chen obtains a chain map
\[
	F(C)\to A_\ast
\]
Chen uses this to reinterpret and prove some variations of Adams' theorem \cite{AdamsCobar}. He shows in particular that, when $M$ is simply connected,
\[
	H_\ast(F(C))\cong H_\ast(\Omega M).
\]

%
\section{Higher Reidemeister torsion}\label{sec 6: higher FR torsion}
%

Very briefly, the construction of higher Reidemeister torsion using Morse theory and flat superconnections is a follows. (See \cite{IComplexTorsion} for an outline of this process and see \cite{IBookOne} for complete details.) Suppose we have a smooth bundle $E\to B$ and a hermitian coefficient system $\cF$ on $E$ so that the homology of each fiber with coefficients in $\cF$ is trivial. For example, a lens space bundle might have this property. A circle bundle might also have this property.

In that case, we construct a canonical fiberwise generalized Morse function using the framed function theorem \cite{IFF}. This gives a family of acyclic chain complexes $C(b),b\in B$. Using the 2-index theorem (originally due to Hatcher and explained in detail in \cite{IBookOne} in this, the chain complex case) we can reduce the family of chain complexes to a family of acyclic based chain complexes which are nonzero in only two degrees. In other words, we have a family of invertible matrices. We smoothly interpolate to get a smooth family of invertible matrices $g_b,b\in B$. In the language of superconnections, $g_b=A_0(b)$. This extends uniquely to a flat superconnection $D=d-A_0-A_1-A_2$ with higher terms being zero since there are no degree 3 endomorphisms for a complex in two consecutive degrees. The higher Reidemeister torsion in degree $2k$ is given by the differential form on $B$:
\[
	\frac1{(2k+1)!2i^k}\int_{t=0}^t Tr\left((h_b^{-t}dh_b^t)^{2k+1}\right)+\text{correction terms}
\]
where $h_b=g_bg_b^\ast$. Thus the main term depends only on $A_0=g_b$. The higher terms are in the correction terms which are polynomials in $A_0,A_1,A_2$. We use the fact that the higher torsion invariant is a polylogarithm and polylogarithms are linearly independent from polynomial functions. This allows us to ignore the correct terms. A recursive formula for these higher correction terms is given in \cite{IBookOne}.


\begin{thebibliography}{aa}



\bibitem[Ada57]{AdamsCobar}
J.~F. Adams, \emph{On the cobar construction}, Colloque de topologie alg\'ebrique, {L}ouvain, 1956, Georges Thone, Li\`ege, 1957, pp.~81--87.

\bibitem[BG01]{BG2}
Jean-Michel Bismut and Sebastian Goette, \emph{Families torsion and {M}orse functions}, Ast\'erisque (2001), no.~275, x+293.

\bibitem[BL95]{BismutLott95}
Jean-Michel Bismut and John Lott, \emph{Flat vector bundles, direct images and higher real analytic torsion}, J. Amer. Math. Soc. \textbf{8} (1995), no.~2, 291--363.

\bibitem[Bro59]{Brown59TwistedTensor}
Edgar~H. Brown, Jr., \emph{Twisted tensor products. {I}}, Ann. of Math. (2) \textbf{69} (1959), 223--246.

\bibitem[Che73]{Chen73}
Kuo-tsai Chen, \emph{Iterated integrals of differential forms and loop space homology}, Ann. of Math. (2) \textbf{97} (1973), 217--246.

\bibitem[Che75]{Chen75}
Kuo~Tsai Chen, \emph{Connections, holonomy and path space homology}, Differential geometry ({P}roc. {S}ympos. {P}ure {M}ath., {V}ol. {XXVII}, {P}art 1, {S}tanford {U}niv., {S}tanford, {C}alif., 1973), Amer. Math. Soc., Providence, R. I., 1975, pp.~39--52.

\bibitem[Che77]{Chen77}
K.-T. Chen, \emph{Iterated path integrals}, Bull. Amer. Math. Soc. \textbf{83} (1977), 831--879.

\bibitem[Che01]{Chen}
\bysame, \emph{Collected papers of {K}.-{T}. {C}hen}, Contemporary Mathematicians, Birkh\"auser Boston Inc., Boston, MA, 2001, Edited and with a preface by Philippe Tondeur, and an essay on Chen's life and work by Richard Hain and Tondeur.

\bibitem[Dum]{Dumitrescu07}
Florin Dumitrescu, \emph{Superconnections and parallel transport}, \href{http://arxiv.org/abs/0711.2766}{arXiv:0711.2766v2}.

\bibitem[GJ89]{GetzlerJones89}
E.~Getzler and J.D.S. Jones, \emph{${A}_\infty$-algebras and the cyclic bar construction}, Illinois J. Math. \textbf{34} (1989), 256--283.

\bibitem[Goe01]{Goette01}
Sebastian Goette, \emph{Morse theory and higher torsion invariants {I}}, \href{http://arxiv.org/abs/math/0111222}{math.DG/0111222}.

\bibitem[Goe03]{Goette03}
\bysame, \emph{Morse theory and higher torsion invariants {II}}, \href{http://arxiv.org/abs/math/0305287}{math.DG/0305287}.

\bibitem[Goe08]{Goette08}
Sebastian Goette, \emph{Torsion invariants for families}, \href{http://arxiv.org/abs/0804.3020}{arXiv:math/0804.3020}.


\bibitem[Igu]{ITwisted}
Kiyoshi Igusa, \emph{Twisting cochains and higher torsion}, \href{http://arxiv.org/abs/math/0212383}{math.AT/0212383}.

\bibitem[Igu87]{IFF}
Kiyoshi Igusa, \emph{The space of framed functions}, Trans. Amer. Math. Soc. \textbf{301} (1987), no.~2, 431--477.


\bibitem[Igu02]{IBookOne}
\bysame, \emph{Higher {F}ranz-{R}eidemeister {T}orsion}, AMS/IP Studies in Advance Mathematics, vol.~31, International Press, 2002.

\bibitem[Igu05]{IComplexTorsion}
\bysame, \emph{Higher complex torsion and the framing principle}, Mem. Amer. Math. Soc. \textbf{177} (2005), no.~835, xiv+94.

\bibitem[KS05]{KadSane05}
Tornike Kadeishvili and Samson Saneblidze, \emph{A cubical model for a fibration}, J. Pure Appl. Algebra \textbf{196} (2005), no.~2-3, 203--228.

\bibitem[Qui85]{Quillensuperconnections}
Daniel Quillen, \emph{Superconnections and the {C}hern character}, Topology \textbf{24} (1985), 89--95.

\bibitem[Swe69]{Sweedler}
M.E. Sweedler, \emph{Hopf {A}lgebras}, W.A. Benjamin, Inc., New York, 1969.

\end{thebibliography}
\end{document}